\newtheorem{theorem}{Theorem}[section]
\newtheorem{lemma}[theorem]{Lemma}
\newtheorem{proposition}[theorem]{Proposition}
\newtheorem{definition}[theorem]{Definition}
\newtheorem{remark}[theorem]{Remark}
\newtheorem{open}[theorem]{Open Problem}
\newtheorem*{mtheorem1}{Main Theorem 1}
\newtheorem*{mtheorem2}{Main Theorem 2}
\newtheorem*{theorem*}{Theorem}
\newcommand{\R}{\mathbb{R}}
\newcommand{\e}{\epsilon}
\newcommand{\N}{\mathbb{N}}
\renewcommand{\H}{\mathcal{H}}
\renewcommand{\S}{\mathbb{S}}
\newcommand{\ds}{\displaystyle}
\newcommand{\rmnote}[1]{}%{\mnote{#1}}
\newcommand{\abs}[1]{{\left|#1\right|}}
\title[A stability result for the first Robin-Neumann eigenvalue]{A stability result for the first Robin-Neumann eigenvalue: \\ a double perturbation approach}
\author{Simone Cito$^1$, Gloria Paoli$^2$, Gianpaolo Piscitelli$^3$ }
\address{$^1$Dipartimento di Matematica e Fisica \lq\lq E. De Giorgi\lq\lq, Università del Salento, Via per Arnesano, 73100 Lecce, Italy.}
\email{simone.cito@unisalento.it}
\address{$^2$ Dipartimento di Matematica e Applicazioni "R. Caccioppoli", Università degli studi di Napoli Federico II, Via Cinthia - Complesso Universitario di Monte Sant'Angelo, 80126 Napoli, Italy.}
\email{gloria.paoli@unina.it}
\address{$^3$Dipartimento di Scienze Economiche, Giuridiche, Informatiche e Motorie, Universit\`a degli Studi di Napoli Parthenope, Via Guglielmo Pepe, Rione Gescal, 80035 Nola (NA), Italy.}
\email{gianpaolo.piscitelli@uniparthenope.it (corresponding author)}
\begin{document}

\maketitle

\begin{abstract}
Let $\Omega=\Omega_0\setminus \overline{\Theta}\subset \mathbb{R}^n$,  $n\geq 2$, where $\Omega_0$ and $\Theta$ are two open, bounded and convex sets such that $\overline{\Theta}\subset \Omega_0$ and let $\beta<0$ be a given parameter. We consider the eigenvalue problem for the Laplace operator associated to $\Omega$, with Robin boundary condition on $\partial \Omega_0$ and Neumann boundary condition on $\partial \Theta$. In \cite{paoli2020sharp} it is proved that  the spherical shell is the only maximizer for the first Robin-Neumann eigenvalue in the class of domains $\Omega$ with  fixed outer perimeter and volume. 

We establish a quantitative version of the afore-mentioned  isoperimetric inequality; the main novelty consists  in the  introduction  of a new type of hybrid asymmetry, that turns out to be the suitable one to treat  the different conditions on the outer and internal boundary. Up to our knowledge, in this context, this is the first stability  result in which \emph{both}  the outer and the inner boundary are perturbed. 

%It is worth noticing that the spectral stability results in literature in which both the inner and the outer boundary are perturbed are in an early stage of development. We introduce a new type of hybrid asymmetry, that turns out to be the suitable one to treat  the different conditions on the two boundaries.

\noindent {MSC 2020:} 35J25, 35P15, 47J30. \\
\textit{Keywords and phrases}: Laplace operator; Robin-Neumann eigenvalue; Quantitative spectral inequality. %to be updated
\end{abstract}

\section{Introduction}
Let $\Omega=\Omega_0\setminus \overline{\Theta}$ be a subset of $\mathbb{R}^n$, with $n\geq 2$, where $\Omega_0$ is an open bounded convex set, $\Theta$ is a finite union of open sets homeomorphic to balls such that $\overline{\Theta}\subset \Omega_0$ and let $\beta<0$ be a given parameter.  

In this paper, we study the first Robin-Neumann eigenvalue of the the Laplace operator, defined as:
\begin{equation}
\label{eig_min}		\lambda_1%^{RN}
(\beta, \Omega)=  \min_{\substack{\psi\in H^{1}(\Omega)\\ \psi\not \equiv0}}R_\Omega(\psi), 
\end{equation}
where
\begin{equation}
\label{Raylegh_quotient}	
R_\Omega(\psi)=\dfrac{\ds\int _{\Omega}|\nabla \psi|^2\;dx+\beta\ds\int_{\partial \Omega_0}\psi^2 \;d\mathcal{H}^{n-1}}{\ds\int_{\Omega}\psi^2\;dx}.
\end{equation}
%where we denote by $\Omega=\Omega_0\setminus \overline{\Theta}$ a subset of $\mathbb{R}^n$, $n\geq 2$, such that $\Omega_0$ is an open, bounded and convex set and $\Theta$ is a convex set  %\textcolor{red}{a finite union of sets, each of one  diffeomorphic to a ball of $\mathbb{R}^n$ and with Lipschitz boundary} 
%such that $\overline{\Theta}\subset \Omega_0$, and by  $\beta<0$ the Robin boundary parameter. 
%Let us observe that when $\beta=0$, then $\lambda_1(\beta, \Omega)$ is the first Neumann eigenvalue \textcolor{blue}{non lo scriverei, visto che stiamo considerando i valori negativi di beta}; meanwhile, for $\beta =+\infty$, $\lambda_1(\beta, \Omega)$ asimptotically it gives the first Dirichlet-Neumann boundary eigenvalue. 
If $w\in H^1(\Omega)$ is a minimizer of \eqref{eig_min}, then it satisfies
\begin{equation}
\label{eig_prob}
\begin{cases}
	-\Delta w=\lambda_1%^{RN}
	(\beta, \Omega) 
	w & \mbox{in}\ \Omega\vspace{0.2cm}\\
\dfrac{\partial w}{\partial \nu}+\beta w=0&\mbox{on}\ \partial \Omega_0\vspace{0.2cm}\\ 
\dfrac{\partial w}{\partial \nu}=0&\mbox{on}\ \partial\Theta,
\end{cases}
\end{equation}
where we denote by $ \nu $  the unit  outer normal to the boundary and  $\beta$ is usually called the Robin boundary parameter associated to the problem.

In \cite{giorgi1}, the authors interpret $\lambda_1(\beta, \Omega)$, in the case $\Theta=\emptyset$, as  \emph{the growth rate of particles arising from branching
of Brownian motions with respect to the local time and the splitting rate $-\beta$}. 
In our case, we are considering, in addition to this situation, an insulating internal hole $\Theta$. 
Moreover, for the physical interpretation in the superconductivity context see also \cite{sabina} and the references therein. 

We denote by $B_{R}$ the open ball of $\mathbb{R}^n$ of radius $R$ centered at the origin and we set $A_{R_1,R_2}:=B_{R_2}\setminus\overline{B_{R_1}}$, with $0<R_1<R_2$. Moreover, we denote, as usual, by $P(\cdot)$ and $|\cdot|$,  respectively, the perimeter and the volume of a set.
In \cite{paoli2020sharp} it has been proved that 
%the authors prove that the spherical shell maximizes \eqref{eig_min} among holed domains with both outer perimeter and volume fixed. More precisely, %the result is the following. \begin{theorem*}[Theorem $3.1$ in \cite{paoli2020sharp}]\label{main_thm_PPT}
%it holds that
\begin{equation}
\label{ineq_eig}		\lambda_1(\beta, \Omega)\leq \lambda_1	(\beta, A),
\end{equation}
where $A=A_{R_1,R_2}$ is the spherical shell such that $P(B_{R_2})=P(\Omega_0)$ and $|\Omega|=|A|$ and 
equality  holds if and only if $\Omega=A$. 
%\end{theorem*}

The aim of the present  paper is to prove a quantitative estimate of the isoperimetric inequality \eqref{ineq_eig}.
More precisely, we want to improve \eqref{ineq_eig}, obtaining an inequality of the following type: 
\begin{equation*}
\lambda_1%^{RN}
(\beta, A)-\lambda_1%^{RN}
(\beta, \Omega)\geq g(\alpha(\Omega)),
\end{equation*}
where $g(\cdot)$ is a modulus of continuity, i.e. a positive continuous increasing function, vanishing only at $0$ and $\alpha(\cdot)$ is a suitable  asymmetry functional, i.e. a non negative functional such that $\alpha(\Omega) = 0$ if and only if $\Omega=A$.
%Indeed, the measurement of the deviation of an admissible set from the radial maximal shapes is a natural question arising from finding the optimal shape. 

In order to get this kind of quantitative  result, we need to introduce the following class of sets, in which \eqref{ineq_eig} turns out to be stable. Let us fix  $0<R_1<R_2<+\infty$ and let us  denote by $\vartheta_{R_1,R_2}$ a positive constant such that $\vartheta_{R_1,R_2}\le\min\{R_1,R_2-R_1\}/2$.  We set
\begin{equation}
\label{admissiblesets}
\begin{split}\mathcal T_{R_1,R_2}= \{\Omega\subseteq\R^n :&\ \Omega=\Omega_0\setminus\overline{\Theta},\  \Omega_0,\Theta\ \text{convex},\ |\Omega|=|A_{R_1,R_2}|,\ P(\Omega_0)=P(B_{R_2}),\\ & d_\mathcal{H}(\Theta,\Omega_0)\ge\vartheta_{R_1,R_2}\ \text{and}\ \rho(\Theta)\ge\vartheta_{R_1,R_2}\},
\end{split}
\end{equation}
 where  $\rho(\cdot)$ is the inradius of a set  and $d_{\mathcal H}(\cdot, \cdot )$ is the Hausdorff distance between two convex sets. More precisely, given $E,F\subseteq\mathbb{R}^n$ convex sets, the Hausdorff distance is defined as follows
 \[
d_{\mathcal H}(E,F)=\inf\{\varepsilon>0\ : \ E\subset F+B_\varepsilon, \ F\subset E+B_\varepsilon\}.
\]
Thus, as a direct consequence of \eqref{ineq_eig}, we have that  the spherical shell $A_{R_1,R_2}$ is the only maximizer for  the shape optimization problem
\begin{equation}\label{robinproblem}
\max\left\{\lambda_1(\beta,\Omega):\Omega\in\mathcal{T}_{R_1,R_2}\right\}. 
\end{equation}
In order to study the stability of the reverse Faber-Krahn type inequality \eqref{ineq_eig} in the class $\mathcal{T}_{R_1,R_2}$, we need to introduce a suitable asymmetry functional, that takes into  account  both the perturbations  on the outer and inner boundary with the different boundary conditions. 
%---Once the reverse Faber-Krahn type inequality \eqref{ineq_eig} is set in $\mathcal{T}_{R_1,R_2}$, we need a suitable asymmetry functionals to provide a quantitative version. In order to treat the different behavior of the two boundaries, we introduce an hybrid asymmetry to study together the outer and the inner perturbation of the boundary.

To treat the perturbation of the outer boundary, we use this  standard notion of asymmetry for a convex set:
\begin{equation}
    \label{fraenkel_asy}
\mathcal{A}_\H(\Omega_0)=\inf_{x_0\in\mathbb{R}^n} \left\{\frac{d_{\mathcal{H}}(\Omega_0, B_{R}(x_0))}{\omega_nR^n}\ : \  P(\Omega_0)=P(B_{R}(x_0))\right\},
\end{equation}
where $\omega_n$ is the measure of the unit  ball in $\mathbb{R}^n$.

On the other hand, we also need to take  into account the perturbation of the  inner hole $\Theta$. %Specifically, we measure the deviation from the most "regular" hole with respect to $\Omega_0$: the inner parallel set relative to $\Omega_0$ (that is the set of points in $\Omega_0$ with distance less than a fixed constant) with the same measure as $\Theta$.
In order to do that, we recall the definition of the test function  $w_{\Omega_0}$ used in \cite{paoli2020sharp} to prove \eqref{ineq_eig}:
\begin{equation}
\label{GFerone}
w_{\Omega_0}(x):=
\begin{cases}
G(d(x))\quad & \text{if} \  d(x)< R_2-R_1\\
z_m & \text{if} \ d(x)\geq R_2-R_1
\end{cases} \ x\in\Omega_0,
\end{equation}
where $z$ be the first positive eigenfunction on the spherical shell $A_{R_1,R_2}$ with   $\|z\|_{L^2(A_{R_1,R_2})}=1$, $d(x)$ is the distance function  from the  boundary of $\Omega_0$ and $G$ is defined as
\begin{equation*}
%\label{G_t_max}
G^{-1}(t)=\int_{t}^{z_M}\dfrac{1}{\ell(\tau)}\;d\tau,
\end{equation*}
with  $\ell(t)=|\nabla z|_{z=t}$, determined for $z_m<t\leq z_M$ with $z_m:=\min_A z$  and $z_M:=\max_A z$.
We point out that $w_{\Omega_0}$ is  a map depending only on the distance from the outer boundary (see Section \ref{Robin_subsec} for further details) and, moreover, it  is constant on the boundaries of the inner parallels to $\Omega_0$. This type of functions are usually called \textit{web-function} and this name  appeared  for the first time in \cite{gazzola1999existence}, since their level lines recall the web of a spider.

%We remark taht Gazzola \cite{gazzola1999existence} firstly gave this name because their level lines recall a spider’s web. 

Therefore, $w_{\Omega_0}$ can be used as a test function for every admissible holed set with outer box $\Omega_0$.
Now, we are in position to give the following definition (see also Figure \ref{fig_01_hybrid}). 
\begin{definition}[Hybrid asymmetry]
\label{inner_asymmetry_def}
We define the hybrid asymmetry of $\Omega=\Omega_0\setminus\overline{\Theta}$ as:
\begin{equation} \label{hybrid}
\alpha(\Omega):=\max\left\{g(\mathcal{A}_\H(\Omega_0)),\tilde{\mathcal{A}}(\Theta;\Omega_0)\right\}.
\end{equation}
Here,  $\mathcal{A}_\mathcal{H}(\cdot)$ is the Hausdorff asymmetry defined in \eqref{fraenkel_asy}  and $g(\cdot)$ is the positive increasing function defined as
\begin{equation*}
g(s)=\begin{cases}
s^2 &\text{if $n=2$}\\
f^{-1}(s^2) &\text{if $n=3$}\\
s^\frac{n+1}{2} &\text{if $n\ge 4$},
\end{cases}
\end{equation*}
with $f(t)=\sqrt{t \log \left(\frac{1}{t}\right)}$ for $0<t<e^{-1}$. 

The term $\tilde{\mathcal{A}}(\Theta; \Omega_0)$ is the weak weighted Fraenkel-type asymmetry of  $\Theta$ relative to the set $\Omega_0$, defined as  
\begin{equation}\label{eq:atilde}
\tilde{\mathcal{A}}(\Theta;\Omega_0):=\int_{\Theta\setminus K_{\Omega_0} 
}(|\nabla w_{\Omega_0}|^2+w_{\Omega_0}^2(x)-z_m^2)\:dx, 
\end{equation}
where 
\begin{itemize}
\item $w_{\Omega_0}\in H^1(\Omega_0)$ is  the test function defined in \eqref{GFerone};% (i.e. $\|z\|_{L^2(A_{R_1,R_2})}=1$);
\item $K_{\Omega_0}$ is the inner parallel set relative to $\Omega_0$, i.e. the set 
\begin{equation*}
K_{\Omega_0}
=\{ x\in\Omega_0\;:\; d(x)>t_{\Omega_0}  \},
\end{equation*}
with $t_{\Omega_0}\in(0,\rho (\Omega_0)) $ such that $|K_{\Omega_0}
|=|\Theta|$;
\item $z$ is the first positive eigenfunction of \eqref{eig_min} on the $A_{R_1,R_2}$ with  $\|z\|_{L^2(A_{R_1,R_2})}=1$;
\item $z_m$ is the minimum of $z$ on $A_{R_1,R_2}$.
\end{itemize}
\end{definition}

\begin{figure}[!ht]
    \centering    \includegraphics[width=.3312\textwidth]{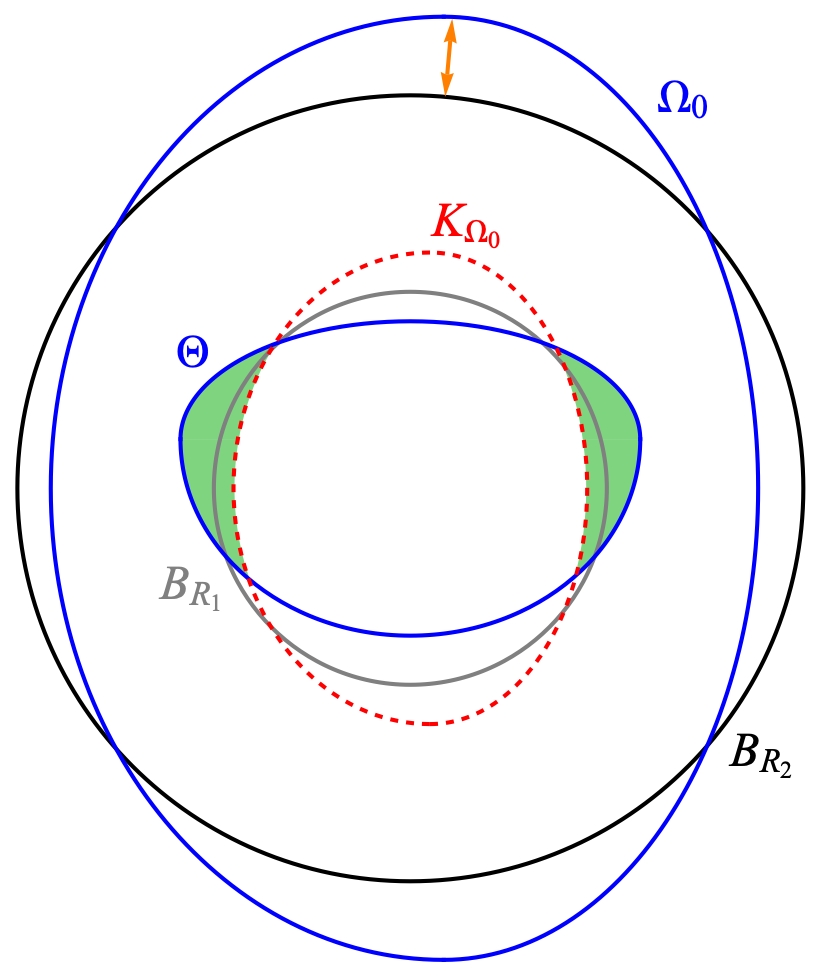}
\caption{$\mathcal{A}_\H(\Omega_0)$ is drawn in orange; the green set is the integration domain in the definition of $\tilde{\mathcal A}(\Theta;\Omega_0)$.}
    \label{fig_01_hybrid}
\end{figure}

We point out that the hybrid asymmetry just introduced is zero if and only if $\Omega=A_{R_1,R_2}$, and so, \eqref{hybrid} actually  quantifies a deviation of an admissible set from the optimal set.
%The quantities $g(\mathcal{A}_\H(\Omega_0))$ and $\tilde{\mathcal{A}}(\Theta;\Omega_0)$  for a proper definition of an asymmetry functional in our setting. 
%Both asymmetry indexes are necessary to estimate the deviation of $\Omega_0\setminus\overline{\Theta}$ from $A_{R_1,R_2}$: 
The term $g(\mathcal{A}_\H(\Omega_0))$ quantifies the deviation with respect to the Hausdorff distance of the domain $\Omega_0$ from the optimal exterior shape $B_{R_2}$; $\tilde{\mathcal{A}}(\Theta;\Omega_0)$ is necessary to estimate the total  deviation of $\Omega$ from $A_{R_1,R_2}$.
%henever $\Omega_0$ has very low (in fact, zero) Hausdorff asymmetry and it is a sort of "semi-distance" of the %an integral distance
%of the inner hole $\Theta$ from the hole $K_{\Omega_0}$, which has the "closest" shape to $\Omega_0$.  
Indeed, if $\Theta$ has a low Hausdorff distance from $K_{\Omega_0}$, then the larger term in the hybrid asymmetry is $g(\mathcal{A}_H(\Omega_0))$; on the other hand, if $\Omega_0$ is the ball $B_{R_2}$, any admissible hole $\Theta\neq B_{R_1}$ satisfies $\tilde{\mathcal{A}}(\Theta;B_{R_2})>0$ and thus in that case the largest term is $\tilde{\mathcal{A}}(\Theta;B_{R_2})$ (see Remark \ref{renna}).     

Moreover, $\alpha(\Omega)$ is a proper asymmetry and in $\mathcal{T}_{R_1,R_2}$ it holds (see Proposition \ref{tendea0}) that 
\[\lim_{j\to+\infty} d_{\mathcal H}(\Omega_j,A_{R_1,R_2}) =0  \iff \lim_{j\to+\infty}\alpha(\Omega_j)=0.\]

The key idea that inspired our definition of asymmetry 
is the following:
provided that $\Omega_0$ is the outer box, the most natural hole to compare $\Theta$ with  is neither a ball with the same measure as $\Theta$ nor $B_{R_1}$, but the inner parallel $K_{\Omega_0}$ having the same measure as $\Theta$. This behaviour seems to depend on the fact that  $K_{\Omega_0}$ makes the set $\Omega_0\setminus K_{\Omega_0}$  a "shell" with outer "box" $\Omega_0$ and fixed distance of the inner boundary $\partial K_{\Omega_0}$ from the outer boundary $\partial\Omega_0$. Moreover, the test function built via web function is constant on both $\partial K_{\Omega_0}$ and $\partial\Omega_0$. %For those reasons, fixed $\Omega_0$, it is more natural to compare any inner hole $\Theta$ with $K_{\Omega_0}$, not with a ball with the same measure of $\Theta$. 
Nevertheless, the set $K_{\Omega_0}$ coincides with $B_{R_1}$ if and only if $\Omega_0=B_{R_2}$.

%that $\Omega_0$ has a given Hausdorff asymmetry with respect to the ball, $\Theta$ has a given distance  from $K_{\Omega_0}$. The hybrid asymmetry that we find is zero if and only if $\Omega_0\setminus\Theta=A_{R_1,R_2}$, and so this mixed asymmetry actually quantifies a deviation of an admissible set from the optimal set. 

The main results of the paper are the following.  The first result is the quantitative version of \eqref{ineq_eig}.

\begin{mtheorem1}[Quantitative isoperimetric inequality]\label{teo_1}
Let $\Omega\subset\R^n$ such that $\Omega=\Omega_0\setminus\overline{\Theta}$, with $\Omega_0$ convex and $\Theta$ is a finite union of open sets homeomorphic to balls. Let $A=A_{R_1,R_2}$ the spherical shell such that $P(B_{R_2})=P(\Omega_0)$ and $|\Omega|=|A|$.
There exist a positive constant $C(n,R_1,R_2,\beta)$ such that, if $\Omega_0$ is $R_2$-nearly spherical, then
\begin{equation}
\label{stab_ineq_intro1}
\lambda_1(\beta,A)-\lambda_1(\beta,\Omega) \geq C(n,R_1,R_2,\beta) \alpha(\Omega). 
\end{equation}
\end{mtheorem1}

The second result is the stability of inequality \eqref{ineq_eig}  in the class $\mathcal{T}_{R_1,R_2}$.%is that inequality   \eqref{ineq_eig} is stable in the class $\mathcal{T}_{R_1,R_2}$.

\begin{mtheorem2}[Stability in  $\mathcal{T}_{R_1,R_2}$]\label{teo_2}
Let $0<R_1<R_2<+\infty$. There exist two positive constants $C(n,R_1,R_2,\beta)$ and $\delta_0(n,R_1,R_2,\beta)$ such that,  for every $\Omega\in\mathcal{T}_{R_1,R_2}$, if $\lambda_1%^{RN}
(\beta,A)-\lambda_1%^{RN}
(\beta,\Omega)\leq \delta_0$, then
\begin{equation}
\label{stab_ineq_intro2}
\lambda_1(\beta,A)-\lambda_1(\beta,\Omega) \geq C(n,R_1,R_2,\beta) \alpha(\Omega), 
\end{equation}
where $A=A_{R_1,R_2}$ is the spherical shell in $\mathcal{T}_{R_1,R_2}$ and $\alpha(\Omega)$ is the asymmetry defined in \eqref{hybrid}.  
\end{mtheorem2}

The proofs are based on  two  estimates. Since on $\partial \Omega_0$ a Robin boundary condition holds, we can use a Fuglede type technique on an auxiliary Steklov type problem to obtain the first estimate relative to the perturbation of the outer boundary. 
For this type of approach we refer to  \cite{fuglede1989stability, fusco2009stability} for  the stability  of the isoperimetric inequality and to \cite{ ferone2015conjectured, gavitone2020quantitative, cito2021quantitative} for the quantitative estimates of some spectral functionals. 
Then, once   the outer boundary is fixed, we can estimate the deviation of $\Theta$ from $K_{\Omega_0}$ using the weak asymmetry $\tilde{\mathcal A}(\Theta;\Omega_0)$, defined in    \eqref{eq:atilde} and  inspired by  \cite{brandolini2010upper, amato2024estimates}.  

In conclusion, the main novelty  of the paper is the introduction of an hybrid type of asymmetry related to the fact that we have to deal with an holed domain with two different boundary conditions and subject to both a perimeter and a volume constraint.
More specifically, as far as the asymmetry of the outer boundary, when treating the auxiliary Steklov problem in the radial case, instead of having only one Bessel function as in \cite{ferone2015conjectured, cito2021quantitative}, we have a linear combination of two Bessel functions and so we have to face the related technical difficulties, such as the validity of all the recurrence formulas, the derivation rules, and the estimate from below of some integral quantities linked to the quantitative estimate. 
Moreover, the inner hole with Neumann boundary condition cannot be treated as in \cite{fuglede1989stability}, since a cancellation of the inner boundary integrals occurs due to the boundary condition. So,  we cannot use the standard Fuglede approach and we need to introduce the weighted Fraenkel type term that appears in \eqref{hybrid}. 

\bigskip

The structure of the paper is the following. In Section \ref{notations_sec}, we  give  some historical insights,  we fix the notation, we recall some known results on the Robin-Neumann eigenvalue problem and recall some useful results on the nearly spherical sets.  In Section \ref{Steklov_sec} we study an auxiliary problem of Steklov type; in Section \ref{main_sec}, we provide some quantitative estimates for $\lambda_1(\beta,\Omega)$ and we prove of the main result of the paper. Finally, in Section \ref{rem_sec}, a list of open problems is presented.

\section{Background and Preliminary results}
\label{notations_sec}
We start this Section by describing the historical background on problems with different boundary conditions on the outer and inner boundary. Then, we fix the notation, we recall some  results on the Robin-Neumann eigenvalue problem and some properties of nearly spherical sets.

\subsection{ A glimpse of history}
%In  recent years, the study of eigenvalue problems, when different boundary conditions has been imposed on the outer and inner boundary of the domain, have been the object of much interest. In literature, up to our knowledge, Stanislaw Zaremba in 1910 \cite{zaremba1910probleme} firstly solved a boundary value problem, suggested to him by Wirt 
%with mixed boundary condition (that is the problem of finding a solution of a partial differential equation satisfying a Dirichlet or a Neumann boundary condition on disjoint parts of the boundary) for the Laplace equation. 

The study of spectral problems for the Robin-Laplace operator  with negative parameter $\beta$ on holed domains arises from the fact that the optimal shape for the first eigenvalue, among domains with fixed volume, is  expected to be the spherical shell for $\abs{\beta}$ large enough. More precisely, in 1977, Bareket \cite{bareket1977isoperimetric} conjectured that, among all smooth bounded domains of given volume, the first eigenvalue is maximized when $\Omega$ is a ball and, in \cite{ferone2015conjectured}, the conjecture has been proved for domains close to a ball in the $L^\infty$ topology. Subsequently, in \cite{freitas2015first,kovavrik2017p}, it has been proved that the conjecture is true for small values of $\left|\beta\right|$; on the other hand, for $\left|\beta\right|$ great enough, the first Robin eigenvalue on the spherical shell is greater than the eigenvalue on the disk, among domains of fixed volume.

More recently, many authors have investigated the Laplace eigenvalue problem   with different  boundary conditions on the outer and inner boundary: for instance positive Robin-Neumann \cite{paoli2020sharp,payne1961some}, Neumann-positive Robin \cite{dellapietra2020optimal,hersch1962contribution}, Dirichlet-Neumann \cite{anoop2023domain,anoop2021shape}, Dirichlet-Dirichlet \cite{biswas2021optimization,chorwadwala2022optimal},
Steklov-Dirichlet \cite{ftouhi2022place,
gavitone2021isoperimetric,gavitone2024monotonicity, hong2022first, paoli2021stability,
verma2020eigenvalue}, Steklov-Robin \cite{gavitone2023steklov}.

%This is the starting point of our researches regarding the shape optimization on holed domains, for which the natural optimal candidates are the spherical shells. 
In particular, in this paper we study a stability result for this kind of problems. 

To the best of our knowledge, the quantitative improvement of spectral inequalities firstly appeared in \cite{hansen1994isoperimetric,melas1992stability}; quantitative improvements of the P\'olya-Szeg\"o principle have been proven in \cite{cianchi2008quantitative}. It is worth noticing that, when $\Theta=\emptyset$, the quantitative inequalities for the Laplace eigenvalue have been studied when Dirichlet (\cite{bhattacharya2001some,fusco2009stability} and  \cite{brasco2015faber}), Neumann \cite{brasco2012sharp}, positive Robin \cite{bucur2018quantitative}, negative Robin \cite{amato2024estimates, cito2021quantitative} or Steklov \cite{brasco2012spectral,gavitone2020quantitative} boundary condition holds.

On the other hand, the quantitative versions of the spectral inequalities on holed domain are still in the early stages of development. We mention \cite{paoli2021stability}, where a result in this direction is proved for the first Steklov-Dirichlet Laplacian eigenvalue among the class of holed domains where the inner hole $\Theta$ is a given ball well cointained in $\Omega_0$. The inner ball can be only translated and so the only perturbation of the boundary acts on $\partial\Omega_0$. 

Nevertheless, it seems that no results are available if a perturbation of \emph{both} inner and outer boundary is kept into account. Nevertheless, it seems that no results are available if a perturbation of \emph{both} inner and outer boundary is kept into account. We point out the recent result in \cite{amato2023talenti}, where the authors give a quantitative result of the classical  Talenti comparison, using another type of mixed  asymmetry. 

\subsection{Notations}
Throughout this paper, we denote by $|\cdot|$, $P(\cdot)$ and $\mathcal{H}^k(\cdot)$, the $n-$dimensional Lebesgue measure, the perimeter and the $k-$dimensional Hausdorff measure in $\mathbb{R}^n$, respectively. The unit open  ball in $\mathbb{R}^n$ will be denoted by $B_1$ and  $\omega_n:=|B_1|$ and the unit sphere in $\mathbb{R}^n$ by $\mathbb{S}^{n-1}$. 
More generally, we denote with $B_r(x_0)$ the set $x_0+rB_1$, that is the ball centered at $x_0$ with measure $\omega_nr^n$, and, for any $0<R_1<R_2<+\infty$, we denote by  $A_{R_1,R_2}$ the spherical shell $B_{R_2}\setminus \overline{B_{R_1}}$. 

Throughout this paper, $\Omega=\Omega_0\setminus \overline{\Theta}$ is a subset of $\mathbb{R}^n$, $n\geq 2$, such that $\Omega_0$ is an open, bounded and convex set and $\Theta$ is a convex set such that $\overline{\Theta}\subset \Omega_0$.

The distance of a point  from the boundary $\partial\Omega_0$ is the function 
\begin{equation*}
d(x)= \inf_{y\in \partial \Omega_0} |x-y|, \quad x\in \overline{\Omega_0},
\end{equation*}
moreover, the inradius of $\Omega_0$ is the radius of the largest ball contained in $\Omega_0$, i.e.
\begin{equation*}
\rho({\Omega_0})=\max \{d(x),\; x\in\overline{\Omega_0}\}.
\end{equation*}

We denote by $J_\nu(r)$, $Y_\nu(r)$, $I_\nu(r)$ and $K_\nu(r)$ the Bessel function of first and second kind, the modified Bessel functions of the first and second kind of order $\nu$,  respectively.
We recall the following recurrence and derivation formulas, see \cite{ abramowitz2006handbook, watson1922theory}:
\begin{gather}
\label{Besselx1}
I_{\nu-1}(r)=\dfrac{2\nu}{r}I_\nu(r)+I_{\nu+1}(r),\\
\label{Besselx2}
K_{\nu-1}(r)=-\dfrac{2\nu}{r}K_\nu(r)+K_{\nu+1}(r),
\end{gather}
\begin{gather}
\label{besselJ_rule}
 \left(z^{-\nu} J_{\nu}(r)\right)'=-z^{-\nu} J_{\nu+1}(r),\\
 \label{besselY_rule}
\left(z^{-\nu} Y_{\nu}(r)\right)'=-z^{-\nu} Y_{\nu+1}(r),\\
 \label{besselI_rule}
\left(z^{-\nu} I_{\nu}(r)\right)'=z^{-\nu} I_{\nu+1}(r),\\
 \label{besselK_rule}
\left(z^{-\nu} K_{\nu}(r)\right)'=-z^{-\nu} K_{\nu+1}(r),
\end{gather}
\begin{gather}
\label{BesselIderivative}
I_{\nu}'(r)=\dfrac{1}{2}\left(I_{\nu-1}(r)+I_{\nu+1}(r)\right),\\
\label{BesselKderivative}
K_{\nu}'(r)=-\frac{1}{2}\left(K_{\nu-1}(r)+K_{\nu+1}(r)\right).
\end{gather}

\subsection{The Robin-Neumann  eigenvalue problem}
\label{Robin_subsec}
We collect now the definitions and  the basic properties of the first Robin-Neumann eigenvalue; for  the proofs refer to \cite{paoli2020sharp}.  

We start by  recalling  the weak formulation of problem \eqref{eig_prob}.
\begin{definition}
A real number $\lambda$ is an eigenvalue of \eqref{eig_prob} if and only if there exists a function $w\in H^{1}(\Omega)$, not identically zero, such that 
\begin{equation*}
%\label{weakly}
\int_{\Omega}\nabla \psi\nabla\varphi \;dx+\beta \int_{\partial \Omega_0} \psi\varphi\;d\mathcal{H}^{n-1}=\lambda\int_{\Omega}\psi \varphi \;dx
\end{equation*}
for every $\varphi\in H^{1}(\Omega)$. The function $\psi$ is called the eigenfunction associated to $\lambda$.
\end{definition}

In the following definition, we list the main properties of \eqref{eig_min}.
\begin{proposition}
Let $\beta<0$.
\begin{itemize}
\item There exists a minimizer $w\in H^{1}(\Omega)$ of \eqref{eig_min},  which is a weak solution to \eqref{eig_prob}. 
\item  $\lambda_1%^{RN}
(\beta,\Omega)$ is simple, i.e. all the associated eigenfunctions are scalar multiple of each other and can be taken to be positive. Moreover, $\lambda_1%^{RN}
(\beta,\Omega)$ is negative.
\item The map $\beta\mapsto \lambda_1%^{RN}
(\beta,\Omega)$ is Lipschitz continuous, non-decreasing, concave and surjective onto $(-\infty,0)$.
\item Let $R_1,R_2$ be two nonnegative real numbers such that $R_2> R_1$, and let $z$ be the minimizer of problem \eqref{eig_min} on the spherical shell $A_{R_1,R_2}$. Then,  $z$ is strictly positive and radially symmetric, in the sense that $z(x)=:\Psi(|x|)$, with $\Psi'(r)>0$.
\end{itemize}
\end{proposition}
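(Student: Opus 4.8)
The plan is to derive the four assertions from the direct method of the calculus of variations, elliptic regularity, the strong maximum principle, rotational symmetry, and an elementary ODE comparison; everything below is essentially standard once the right trace estimate is in place. For existence I would take a minimizing sequence $\{\psi_j\}\subset H^1(\Omega)$ for $R_\Omega$ with $\|\psi_j\|_{L^2(\Omega)}=1$. Since $\Omega_0$ is convex and $\overline\Theta\subset\Omega_0$, the open set $\Omega$ is Lipschitz, so the trace operator $H^1(\Omega)\to L^2(\partial\Omega_0)$ is compact and for every $\eps>0$ there is $C_\eps>0$ with $\int_{\partial\Omega_0}\psi^2\,d\mathcal{H}^{n-1}\le \eps\int_\Omega|\nabla\psi|^2\,dx+C_\eps\int_\Omega\psi^2\,dx$. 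Inserting this into $R_\Omega(\psi_j)$ and choosing $\eps=1/(2|\beta|)$ shows $\{\psi_j\}$ is bounded in $H^1(\Omega)$; extracting a subsequence converging weakly in $H^1(\Omega)$, strongly in $L^2(\Omega)$ by Rellich and strongly in $L^2(\partial\Omega_0)$ by compactness of the trace, and using the weak lower semicontinuity of the Dirichlet integral, the limit $w$ has unit $L^2$ norm and $R_\Omega(w)\le\lambda_1(\beta,\Omega)$, hence is a minimizer. Imposing $\frac{d}{dt}R_\Omega(w+t\varphi)|_{t=0}=0$ for all $\varphi\in H^1(\Omega)$ produces the weak form of \eqref{eig_prob}.

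For the sign and simplicity I would argue as usual. Since $|\nabla |w||=|\nabla w|$ a.e., $|w|$ is again a minimizer, so $w$ may be assumed nonnegative; interior elliptic regularity gives smoothness of $w$ inside $\Omega$ and the strong maximum principle (via Harnack) upgrades this to $w>0$ in $\Omega$. Testing \eqref{eig_min} with the constant function $1$ gives $\lambda_1(\beta,\Omega)\le R_\Omega(1)=\beta P(\Omega_0)/|\Omega|<0$. If $w_1,w_2$ are two eigenfunctions for $\lambda_1(\beta,\Omega)$, both positive after normalization, then for a suitable $t>0$ the function $w_1-tw_2$ still solves the linear eigenvalue equation and realizes the minimum of $R_\Omega$, yet vanishes somewhere in $\Omega$; by the strong maximum principle $w_1\equiv t w_2$, which is simplicity.

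For the dependence on $\beta$, observe that for each fixed $\psi$ the map $\beta\mapsto R_\Omega(\psi)$ is affine with nonnegative slope $\int_{\partial\Omega_0}\psi^2\big/\int_\Omega\psi^2\ge 0$; hence $\beta\mapsto\lambda_1(\beta,\Omega)$, being an infimum of affine non-decreasing functions, is concave and non-decreasing, and concavity on the open half-line automatically yields local Lipschitz continuity, made quantitative by using $w_{\beta_2}$ as a competitor for $\lambda_1(\beta_1,\Omega)$ together with the uniform $H^1$ bound and the trace inequality from the first step. For surjectivity onto $(-\infty,0)$: at $\beta=0$ the constant is an eigenfunction with eigenvalue $0$, so by monotonicity and continuity $\lambda_1(\beta,\Omega)\to 0^-$ as $\beta\to 0^-$, while $\lambda_1(\beta,\Omega)\le\beta P(\Omega_0)/|\Omega|\to-\infty$ as $\beta\to-\infty$; the intermediate value theorem then covers all of $(-\infty,0)$.

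Finally, on the spherical shell, since $A_{R_1,R_2}$ is $O(n)$-invariant and $\lambda_1(\beta,A_{R_1,R_2})$ is simple, for every rotation $\mathcal R$ the function $z\circ\mathcal R$ is a positive normalized eigenfunction and therefore equals $z$; hence $z(x)=\Psi(|x|)$. The profile solves $-(r^{n-1}\Psi')'=\lambda_1(\beta,A_{R_1,R_2})\,r^{n-1}\Psi$ on $(R_1,R_2)$ with $\Psi'(R_1)=0$ (Neumann on $\partial B_{R_1}$, whose outward normal relative to $A_{R_1,R_2}$ points toward the origin) and $\Psi'(R_2)+\beta\Psi(R_2)=0$. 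Since $\Psi>0$ and $\lambda_1(\beta,A_{R_1,R_2})<0$ the right-hand side is positive, so $r\mapsto r^{n-1}\Psi'(r)$ is strictly increasing; as it vanishes at $r=R_1$, it is positive on $(R_1,R_2]$, i.e. $\Psi'>0$ there. I expect the only genuinely delicate point to be the coercivity in the first step: the Robin term with $\beta<0$ a priori makes $R_\Omega$ unbounded below, and one must exploit the Lipschitz regularity of $\Omega$ to absorb $\int_{\partial\Omega_0}\psi^2$ into a small fraction of $\int_\Omega|\nabla\psi|^2$; once coercivity and the positivity of the first eigenfunction are secured, the remaining assertions follow softly from symmetry, convexity, and the maximum principle.
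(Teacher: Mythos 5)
The paper does not prove this proposition itself---it cites \cite{paoli2020sharp} for the proofs---so there is no internal argument to compare against. Your blind proof follows the standard route (direct method with interpolated trace inequality for coercivity, strong maximum principle and Hopf-type argument for positivity and simplicity, infimum of affine functions for concavity and local Lipschitz regularity, and the radial ODE on the shell) and is in substance correct; this is certainly in the same spirit as what the cited reference does.

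Two small points worth tightening. First, in the simplicity step, ``for a suitable $t>0$ the function $w_1-tw_2$\ldots vanishes somewhere in $\Omega$; by the strong maximum principle $w_1\equiv tw_2$'' hides the crucial choice: you must take $t^*=\sup\{t>0: w_1-tw_2\ge 0\text{ on }\overline\Omega\}$, so that $w_1-t^*w_2$ is a \emph{nonnegative} eigenfunction touching zero; if the zero is interior the strong minimum principle for $-\Delta u-\lambda_1 u=0$ (with $-\lambda_1>0$) applies, and if it lies on $\partial\Omega_0$ or $\partial\Theta$ you need Hopf's boundary point lemma combined with the Robin (resp.\ Neumann) condition, which forces $\partial_\nu(w_1-t^*w_2)=0$ there and contradicts Hopf's strict sign. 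As written the argument looks as if any sign-changing $t$ suffices, which it does not. Second, your phrase ``Lipschitz continuous'' should be read as \emph{locally} Lipschitz on $(-\infty,0)$: the slope $\int_{\partial\Omega_0}w_\beta^2\big/\int_\Omega w_\beta^2$ generically blows up as $\beta\to-\infty$ (indeed $\lambda_1(\beta,\Omega)\sim -\beta^2$ for smooth domains), so concavity gives local Lipschitz continuity on compact subintervals but not a global constant; this is what the statement means and what your argument actually establishes. With these two clarifications, the proof is sound.
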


We recall now explicitly Theorem $3.1$ in \cite{paoli2020sharp}, where it is proved that 
the spherical shell is the optimal shape for $\lambda_1(\beta,\Omega)$ when both the volume and the outer perimeter are fixed. This result holds for all $\beta\in \R\setminus \{0\}$, but for our purposes we state it only in the case of negative parameter.
\begin{theorem}[Theorem $3.1$ in \cite{paoli2020sharp}]
\label{max_lambda}
Let $\beta<0$ and  let  $A=A_{R_1,R_2}$  be the spherical shell such that $|A|=|\Omega|$ and  $P(B_{R_2})=P(\Omega_0)$. Then,
\begin{gather*}
\lambda_1%^{RN}
(\beta, \Omega)\leq \lambda_1%^{RN}
(\beta, A).
\end{gather*}
\end{theorem}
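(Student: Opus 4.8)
\textbf{Proof strategy for Theorem 3.1 of \cite{paoli2020sharp} (reverse Faber--Krahn for the spherical shell).}

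The plan is to use the web-function test function $w_{\Omega_0}$ defined in \eqref{GFerone} as a competitor in the variational characterization \eqref{eig_min} of $\lambda_1(\beta,\Omega)$, and then to show that the Rayleigh quotient $R_\Omega(w_{\Omega_0})$ is bounded above by $\lambda_1(\beta,A)$ via a rearrangement/coarea argument. Since $w_{\Omega_0}$ is built from the radial eigenfunction $z$ on $A=A_{R_1,R_2}$ precomposed with the distance function $d(\cdot)$ from $\partial\Omega_0$, it is constant on each inner parallel boundary and it is admissible for \emph{every} holed set $\Omega_0\setminus\overline{\Theta}$ with that outer box (its Neumann datum on $\partial\Theta$ is automatically compatible because the level sets of $d$ foliate $\Omega_0$). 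The core of the argument is therefore the chain
\[
\lambda_1(\beta,\Omega)\ \le\ R_\Omega(w_{\Omega_0})\ =\ \frac{\int_\Omega|\nabla w_{\Omega_0}|^2\,dx+\beta\int_{\partial\Omega_0}w_{\Omega_0}^2\,d\mathcal H^{n-1}}{\int_\Omega w_{\Omega_0}^2\,dx}\ \le\ \frac{\int_A|\nabla z|^2\,dx+\beta\int_{\partial B_{R_2}}z^2\,d\mathcal H^{n-1}}{\int_A z^2\,dx}=\lambda_1(\beta,A).
\]

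First I would set up the coarea formula along the level sets $\{d=t\}$: writing $\mu(t)=|\{x\in\Omega_0:d(x)>t\}|$ and $P(t)=\mathcal H^{n-1}(\{d=t\}\cap\Omega_0)$, convexity of $\Omega_0$ gives $|\nabla d|=1$ a.e., $-\mu'(t)=P(t)$, and the crucial monotonicity $P(t)\le P(\partial\Omega_0)=P(\partial B_{R_2})$ for $t\ge 0$ (the inner parallels of a convex set have decreasing perimeter). On the spherical shell the analogous quantities are explicit: the level set $\{|x|=R_2-t\}$ has perimeter $n\omega_n(R_2-t)^{n-1}$, which is increasing in $t$ on the relevant range, so after matching the outer perimeters one gets $P(t)\le n\omega_n(R_2-t)^{n-1}$ pointwise. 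Because $w_{\Omega_0}(x)=G(d(x))$ depends only on $d$, with $G$ chosen exactly so that $|G'(d)|=\ell(G(d))=|\nabla z|$ evaluated at the matching level, one can rewrite $\int_\Omega|\nabla w_{\Omega_0}|^2\,dx=\int_0^{R_2-R_1} G'(t)^2 P(t)\,dt$ and $\int_\Omega w_{\Omega_0}^2\,dx=\int_0^{\rho(\Omega_0)}G(t)^2 P(t)\,dt$ (with $G\equiv z_m$ past $t=R_2-R_1$), then compare termwise against the corresponding integrals for $z$ on $A$; the volume constraint $|\Omega|=|A|$ is what makes the denominators comparable. The boundary term is equal on the nose since $w_{\Omega_0}=z_M$ on $\partial\Omega_0$, $z=z_M$ on $\partial B_{R_2}$, and $P(\partial\Omega_0)=P(\partial B_{R_2})$.

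The main obstacle is controlling the competition between numerator and denominator simultaneously under \emph{two} constraints (outer perimeter and volume): a pointwise inequality $P(t)\le n\omega_n(R_2-t)^{n-1}$ helps the gradient term (since $G'$ is weighted against $P(t)$ on $[0,R_2-R_1]$) but would hurt the $L^2$ denominator if used naively, so one must exploit the sign and monotonicity of $z$ (recall $\Psi'(r)>0$, hence $w_{\Omega_0}$ is monotone in $d$ with its maximum on $\partial\Omega_0$) together with the fact that the measure constraint forces the ``excess'' of $P(t)$ over $n\omega_n(R_2-t)^{n-1}$ near the boundary to be balanced by a deficit deeper inside, where $w_{\Omega_0}$ is smaller (in fact constant, $=z_m$). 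A clean way to package this is a Hardy--Littlewood type rearrangement: replace $P(t)$ by the radial profile and use that $G$ is monotone, so redistributing mass from high-$t$ (small $w$) regions to low-$t$ (large $w$) regions only decreases the Rayleigh quotient numerator and increases the denominator in the favorable direction. Finally, equality analysis: all inequalities are simultaneously equalities iff $P(t)=n\omega_n(R_2-t)^{n-1}$ for all $t\in[0,R_2-R_1]$ and $|K_{\Omega_0}|=|\Theta|$ with $\Theta$ itself a level set, which (by the equality case of the isoperimetric inequality for the parallels, i.e. convexity rigidity) forces $\Omega_0=B_{R_2}$ and $\Theta=B_{R_1}$, hence $\Omega=A$. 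Since the statement here only claims the inequality, the equality discussion can be deferred; the displayed chain above is the whole proof.
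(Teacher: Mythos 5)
Your overall strategy is the right one and is essentially the route taken in \cite{paoli2020sharp} (and implicitly reflected in equations (3.9)--(3.11) of that paper, which the present article reuses in the proof of Theorem~\ref{teo:buco}): test with the web function $w_{\Omega_0}$, rewrite the three terms of $R_\Omega(w_{\Omega_0})$ via the coarea formula along levels of $d$, and compare against the radial integrals using the perimeter decrease of inner parallels of a convex body after normalizing $P(\Omega_0)=P(B_{R_2})$. However, several of the claims you make along the way are wrong and, if followed literally, would send you in the wrong direction.

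First, $n\omega_n(R_2-t)^{n-1}$ is \emph{decreasing} in $t$, not increasing. Second, there is no \lq\lq excess of $P(t)$ over $n\omega_n(R_2-t)^{n-1}$ near the boundary balanced by a deficit deeper inside.\rq\rq\ Convexity and the perimeter normalization force $P(t)\le n\omega_n(R_2-t)^{n-1}$ for \emph{every} $t\in[0,R_2-R_1]$, with equality at $t=0$; there is never an excess. What the volume constraint controls is instead the depth $t_{\Omega_0}$ at which the inner parallel carves out the prescribed volume: it forces $t_{\Omega_0}\ge R_2-R_1$, and on $[R_2-R_1,t_{\Omega_0}]$ the test function is constant $\equiv z_m$. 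Third, and most importantly, the alleged tension between numerator and denominator does not exist: since $\beta<0$ the Rayleigh quotient is negative, so shrinking \emph{both} $\int_\Omega|\nabla w_{\Omega_0}|^2$ and $\int_\Omega w_{\Omega_0}^2$ pushes the quotient down, which is exactly what you want. Concretely, the three facts $\int_{\Omega_0\setminus K_{\Omega_0}}|\nabla w_{\Omega_0}|^2\le\int_A|\nabla z|^2$, $\int_{\partial\Omega_0}w_{\Omega_0}^2=\int_{\partial B_{R_2}}z^2$, $\int_{\Omega_0\setminus K_{\Omega_0}}w_{\Omega_0}^2\le\int_A z^2$ give $N_\Omega\le N_A<0$ and $D_\Omega\le D_A$, whence $N_\Omega/D_\Omega\le N_A/D_\Omega\le N_A/D_A$; the rearrangement observation is only needed for the third inequality (and there a direct computation using $G\ge z_m$, $P(t)\le n\omega_n(R_2-t)^{n-1}$ and the volume constraint works without invoking Hardy--Littlewood). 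Finally, your coarea formulas are written for $\int_\Omega$ as if $\Theta$ were the level set $K_{\Omega_0}$; in general it is not, and one must first pass from $\Omega=\Omega_0\setminus\overline{\Theta}$ to $\Omega_0\setminus\overline{K_{\Omega_0}}$ using $|\Theta|=|K_{\Omega_0}|$, $w_{\Omega_0}\ge z_m$ everywhere and $w_{\Omega_0}\equiv z_m$, $\nabla w_{\Omega_0}\equiv 0$ on $K_{\Omega_0}$ (this is precisely the splitting carried out in the proof of Theorem~\ref{teo:buco}).
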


The test functions used in the proof of Theorem \ref{max_lambda} fall in the class of the so-called web-functions,  which
depend only on the distance from the boundary. For more details on the web function we refer to \cite{gazzola1999existence, makai1959principal,polya1960two}, and we point out that also the analysis of functionals only depending on the distance has been widely studied, see e.g. \cite{lemenant}.

Regarding problem \eqref{eig_min}, the authors in \cite{paoli2020sharp} use  the following test function
\begin{equation*}%\label{web_function_PPT}
w_{\Omega_0}(x):=
\begin{cases}
G(d(x))\quad &\text{if} \  d(x)< R_2-R_1\\
z_m &\text{if} \ d(x)\geq R_2-R_1
\end{cases}
\quad x\in \Omega_0,
\end{equation*}
where $z$ is the eigenfunction of \eqref{eig_min} on $A_{R_1,R_2}$ and $G$ is defined as
\begin{equation*}
G^{-1}(t)=\int_{t}^{z_M}\dfrac{1}{\ell(\tau)}\;d\tau,
\end{equation*}
and $\ell(t)=|\nabla z|_{z=t}$, determined for $z_m<t\leq z_M$ with $z_m:=\min_A z$  and $z_M:=\max_A z$. Moreover,  $w_{\Omega_0}$ satisfies the following properties: $w_{\Omega_0}\in H^1(\Omega_0)$ and	\begin{gather*}
|\nabla w_{\Omega_0}|_{u=t}=|\nabla z|_{v=t},\\
(w_{\Omega_0})_m:=\min_{\Omega_0} w_{\Omega_0}\geq z_m,\\
(w_{\Omega_0})_M:=\max_{\Omega_0} w_{\Omega_0} = z_M=G(0).
\end{gather*}
%Therefore the function  \eqref{GFerone} is simply an extension to $\Omega_0$ of the test function \eqref{web_function_PPT}.
 
Moreover, we recall  the expression of the eigenfunction $z$ associated to $\lambda=\lambda_1(\beta, A_{R_1,R_2})$: %is obtained by imposing the boundary conditions to a linear combination of the Bessel function of first and second order:
\begin{equation*}%\label{aut_anello}
z(x)=|x|^{1-\frac n2 }\left(Y_{\frac n2 }(\sqrt\lambda R_1)J_{\frac n2 -1}(\sqrt\lambda |x|)-J_{\frac n2 }(\sqrt\lambda R_1)Y_{\frac n2 -1}(\sqrt\lambda |x|)\right).
\end{equation*}
So, by the radiality of $z$, we have $z(x)=\Psi(|x|)$ and 
\[
 \Psi(r)=r^{1-\frac n2}\delta(r),\quad
 \Psi'(r)=r^{1-\frac n2}\gamma(r),
\]
once having set
\[
\begin{split}
& \delta(r)=Y_{\frac n2 }(\sqrt\lambda R_1)J_{\frac n2 -1}(\sqrt\lambda r)-J_{\frac n2 }(\sqrt\lambda R_1)Y_{\frac n2 -1}(\sqrt\lambda r),\\
& \gamma(r)=-Y_{\frac n2 }(\sqrt\lambda R_1)J_{\frac n2}(\sqrt\lambda r)-J_{\frac n2 }(\sqrt\lambda R_1)Y_{\frac n2 }(\sqrt\lambda r),
\end{split}
\]
and  using \eqref{besselJ_rule}-\eqref{besselY_rule}.

Therefore, substituting $z$ in \eqref{eig_min}, we obtain 
\begin{equation}
\label{eig_spherical}
\lambda_1(\beta, A_{R_1,R_2})=\frac{\ds\int_{R_1}^{R_2}r\gamma^2(r)dr+\beta R_2 \delta^2(R_2) }{\ds\int_{R_1}^{R_2}r\delta^2(r)dr}.
\end{equation}
Finally, we stress that it is possible to write the explicit expression of \eqref{eig_spherical} by using the following relations for Bessel functions (see \cite[eq. (10)-(11) pag. 134]{watson1922theory}):
\[
\begin{split}
& \int r F_\nu (\sqrt\lambda r)G_\nu (\sqrt\lambda r)dr=\frac 14 r^2\left(2 F_\nu (\sqrt\lambda r)G_\nu (\sqrt\lambda r)-F_{\nu-1} (\sqrt\lambda r)G_{\nu+1} (\sqrt\lambda r)-F_{\nu+1} (\sqrt\lambda r)G_{\nu-1} (\sqrt\lambda r)\right),\\
& \int r F^2_\nu (\sqrt\lambda r)dr=\frac 12 r^2\left( F^2_\nu (\sqrt\lambda r)-F_{\nu-1} (\sqrt\lambda r)F_{\nu+1} (\sqrt\lambda r)\right),
\end{split}
\]
where $F_\nu$ and $G_\nu$ are Bessel functions of any kind of order $\nu$.

\subsection{Nearly spherical sets}%\label{nearly_sec}
In this Section, we give some definitions and useful properties related to nearly spherical sets (see \cite{fuglede1989stability,fusco2015quantitative}).
\begin{definition}
\label{NSset}
Let $\Omega_0\subset \mathbb{R}^n$ be an open bounded set with $0\in \Omega_0$. The set  $\Omega_0$ is said a \emph{$R-$nearly spherical set parametrized by $u$} if there exist a constant  $R>0$ and $u\in W^{1,\infty}(\mathbb{S}^{n-1})$, with $||u||_{W^{1,\infty}}\le R/2$, such that
\begin{equation*}%\label{nearly}
 \partial    \Omega_0= \left\{y \in \R^n \colon y=\xi  (R+u(\xi)), \ \xi \in \mathbb{S}^{n-1}\right\}.
\end{equation*}
%where $R$ % = (\omega \slash \omega_n)^{\frac{1}{n}}$ is the radius of the ball having the same measure of $\Omega_0$.
\end{definition}
Let us observe that the volume of a $R-$nearly spherical set is given by 
\begin{equation*}
%\label{vol_ns}
|\Omega_0|=\dfrac{1}{n}\int_{\mathbb{S}^{n-1}} \left(R+u(\xi) \right)^n\;d\mathcal{H}^{n-1},
\end{equation*}
and the perimeter by
\begin{equation*}%\label{perimeter}
P(\Omega_0)=\int_{\mathbb{S}^{n-1}}\left(R+u(\xi)\right)^{n-1} \sqrt{1+\dfrac{|\nabla_\tau u(\xi)|^2}{(R+u(\xi))^2}}d\mathcal{H}^{n-1},
\end{equation*}
where $\nabla_\tau u$ denotes  the tangential gradient of $u$ and the tangential Jacobian of the map  \\$\phi:\xi\in\mathbb{S}^{n-1}\to y=R\xi (1+u(\xi))$ is 
\begin{equation*}%\label{jac}
J_{\phi}(\xi)= (R+u(\xi))^{n-2}\sqrt{(R+u(\xi))^2+|\nabla_\tau u(\xi)|^2}.
\end{equation*}
Moreover, the unit  outer normal to $\partial \Omega_0$ at $y=\xi(R+u(\xi))$ is given by 
\begin{equation*}%\label{normal}
\nu(\xi)=\dfrac{\xi(R+u(\xi))-\nabla_\tau u(\xi)}{\sqrt{(R+u(\xi))^2+|\nabla_\tau u(\xi)|^2}}.
\end{equation*}

In the following, we deal with sets that are, in a suitable sense, close to a spherical shell
 and here we set the needed notation.

%Therefore, we need to introduce a sort of \ \lq\lq doubly connected nearly-spherical set\rq\rq\ and here we set the notation.  % in our setting 
\begin{definition}%\label{def:nearly}
Let $\Omega=\Omega_0\setminus \Theta$, where $\Omega_0$ and $\Theta$ are two convex sets such that $\Theta\subset\subset\Omega_0$ and $0\in \Theta$. We say that $\Omega$ is a \emph{$(R_1,R_2)-$nearly annular set}  if there exist $0<R_1<R_2<+\infty$ and $u,v\in W^{1,\infty}(\mathbb{S}^{n-1})$, with $||u||_{W^{1,\infty}}<R_2/2$, $||v||_{W^{1,\infty}}\le R_1/2$, such that 
\begin{equation*}%\label{nearly_out}
\partial\Omega_0= \left\{y \in \R^n \colon y=\xi  (R_2+u(\xi)), \, \xi \in \mathbb{S}^{n-1}\right\}
\end{equation*}
and
\begin{equation*}%\label{nearly_in}
\partial\Theta= \left\{y \in \R^n \colon y=\xi  (R_1+v(\xi)), \, \xi \in \mathbb{S}^{n-1}\right\}.
\end{equation*}
%where $R_1$ and $R_2$ are such that $|\Omega|=|A_{R_1,R_2}|$ and $P(\Omega_0)=P(B_{R_2})$

%if there exists $x_0\in\mathbb{R}^n$ such that:\begin{equation}d_{\mathcal{H}}\left(B_{R_2}(x_0),\Omega_0\right)\leq \frac{1}{2}\end{equation}and \begin{equation}d_{\mathcal{H}}\left(B_{R_1}(x_0),\Theta_0\right)\leq \frac{1}{2}\end{equation} $\Omega_0$ is a $R_2$ nearly spherical set and $\Theta$ is a $R_1$ nearly spherical set.
\end{definition}

This definition and the outer perimeter and volume constraints yield to the following Lemma, based on the Taylor expansions.

\begin{lemma} 
\label{lemma2}
 There exists $C=C(n)>0$ such that, for any parametrization couple $u, v\in W^{1,\infty}(\mathbb S^{n-1})$ of a $(R_1,R_2)-$nearly annular set and for every $0<\varepsilon<R_1/2$ such that $||u||_{W^{1,\infty}},$ $||v||_{W^{1,\infty}}\leq \varepsilon$, the following estimates hold:
\begin{equation}\label{E1u}
\left| \left(R_2+u \right)^{n-1}-\left(R_2^{n-1}+(n-1)R_2^{n-2} u+ (n-1) (n-2) R_2^{n-3}\frac{u^2}{2} \right)  \right|\leq C\varepsilon R_2^{n-3} u^2;
\end{equation}
\begin{equation}\label{E1v}
\left| \left(R_1+v \right)^{n-1}-\left(R_1^{n-1}+(n-1)R_1^{n-2} v+ (n-1) (n-2) R_1^{n-3}\frac{v^2}{2} \right)  \right|\leq C\varepsilon R_1^{n-3} v^2;
\end{equation}
\begin{equation}\label{E2u}
1+\frac{|\nabla_\tau u|^2}{2R_2^2}-\sqrt{1+\dfrac{|\nabla_\tau u|^2}{(R_2+u)^2}}\leq C\varepsilon\left(u^2+|\nabla_\tau u|^2\right).
\end{equation}
%\begin{equation}\label{E2v} 1+\frac{|\nabla v|^2}{2R_1^2}-\sqrt{1+\dfrac{|\nabla v|^2}{(R_1+v)^2}}\leq C\varepsilon\left(v^2+|\nabla v|^2\right).\end{equation}
Moreover,
\begin{equation}
\label{moreover_1}
\left| R_2^{n-2}\int_{\mathbb{S}^{n-1}}u\;d\mathcal{H}^{n-1} + \dfrac{R^{n-3}_2}{2(n-1)} \int_{\mathbb{S}^{n-1}}|\nabla_\tau u|^2\;d\mathcal{H}^{n-1} +\dfrac{(n-2)}{2} R_2^{n-3}   \int_{\mathbb{S}^{n-1}}u^2\;d\mathcal{H}^{n-1} \right| \leq C\varepsilon,
\end{equation}
and 
\begin{equation}
\begin{split}        \label{moreover_2}
\left| R_1^{n-1}\int_{\mathbb{S}^{n-1}}v\;d\mathcal{H}^{n-1} +\right.& \dfrac{(n-1)}{2}R_1^{n-1} \int_{\mathbb{S}^{n-1}}v^2\;d\mathcal{H}^{n-1}\\
& \left.- R_2^{n-1}   \int_{\mathbb{S}^{n-1}}u\;d\mathcal{H}^{n-1} -\dfrac{(n-1)}{2}R_2^{n-1}\int_{\mathbb{S}^{n-1}}u^2\;d\mathcal{H}^{n-1} \right| \leq C\varepsilon.
\end{split}
\end{equation}
\end{lemma}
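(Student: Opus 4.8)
The plan is to treat the three pointwise estimates \eqref{E1u}--\eqref{E2u} by elementary one-variable Taylor expansions, and then to deduce the integrated relations \eqref{moreover_1} and \eqref{moreover_2} by integrating those expansions over $\mathbb{S}^{n-1}$ and imposing the constraints $P(\Omega_0)=P(B_{R_2})$ and $|\Omega|=|A_{R_1,R_2}|$ coming from the class $\mathcal T_{R_1,R_2}$ (the estimates \eqref{E1u}--\eqref{E2u} themselves use only the nearly-annular structure).

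For \eqref{E1u} I would apply Taylor's formula with Lagrange remainder to $\varphi(t)=(R_2+t)^{n-1}$ at $t=0$, stopped after the quadratic term: $\varphi(u)=R_2^{n-1}+(n-1)R_2^{n-2}u+\tfrac{(n-1)(n-2)}{2}R_2^{n-3}u^2+\tfrac16\varphi'''(\theta u)u^3$ for some $\theta\in(0,1)$. When $n=2,3$ the polynomial $\varphi$ has degree $\le 2$, so $\varphi'''\equiv 0$ and the left-hand side of \eqref{E1u} is identically zero; when $n\ge 4$, the bounds $|u|\le\varepsilon<R_1/2\le R_2/2$ give $R_2/2\le R_2+\theta u\le 3R_2/2$, hence $|\varphi'''(\theta u)|\le C(n)R_2^{n-4}$, and using $|u|^3\le\varepsilon u^2$ the remainder is controlled by $C(n)\varepsilon R_2^{n-3}u^2$ (the fixed radii being absorbed into the constant). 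Estimate \eqref{E1v} is identical with $(R_1,v)$ in place of $(R_2,u)$. For \eqref{E2u} I would set $p:=|\nabla_\tau u|^2/(R_2+u)^2\ge 0$ and use the two elementary facts $0\le 1+\tfrac p2-\sqrt{1+p}\le \tfrac{p^2}{8}$ and $\bigl|\tfrac1{R_2^2}-\tfrac1{(R_2+u)^2}\bigr|\le\tfrac{2R_2|u|+u^2}{R_2^2(R_2+u)^2}$; together with $|u|,|\nabla_\tau u|\le\varepsilon\le R_2/2$ (so $(R_2+u)^2\ge R_2^2/4$) these yield the pointwise bound $\bigl|1+\tfrac{|\nabla_\tau u|^2}{2R_2^2}-\sqrt{1+p}\bigr|\le C\varepsilon|\nabla_\tau u|^2\le C\varepsilon(u^2+|\nabla_\tau u|^2)$.

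To obtain \eqref{moreover_1} I would write the outer-perimeter constraint through the integral formula recalled above, namely $\int_{\mathbb{S}^{n-1}}(R_2+u)^{n-1}\sqrt{1+|\nabla_\tau u|^2/(R_2+u)^2}\,d\mathcal{H}^{n-1}=R_2^{n-1}\mathcal{H}^{n-1}(\mathbb{S}^{n-1})$, substitute \eqref{E1u} for $(R_2+u)^{n-1}$ and \eqref{E2u} for the square-root factor, and expand the product. The zeroth-order term cancels against $R_2^{n-1}\mathcal{H}^{n-1}(\mathbb{S}^{n-1})$; the cross term between the linear-in-$u$ part and the $|\nabla_\tau u|^2$ part, and every remainder, is pointwise $O(\varepsilon^3)$, hence $O(\varepsilon)$ after integrating over the finite-measure set $\mathbb{S}^{n-1}$ and using $\|u\|_{W^{1,\infty}}\le\varepsilon$. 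What survives is $(n-1)R_2^{n-2}\!\int u+\binom{n-1}{2}R_2^{n-3}\!\int u^2+\tfrac12 R_2^{n-3}\!\int|\nabla_\tau u|^2=O(\varepsilon)$, and dividing by $n-1$ gives precisely \eqref{moreover_1}. For \eqref{moreover_2} I would run the same scheme with the volume constraint $|\Omega|=|\Omega_0|-|\Theta|=|A_{R_1,R_2}|$, i.e. $\tfrac1n\int_{\mathbb{S}^{n-1}}\bigl[(R_2+u)^n-(R_1+v)^n\bigr]d\mathcal{H}^{n-1}=\tfrac1n(R_2^n-R_1^n)\mathcal{H}^{n-1}(\mathbb{S}^{n-1})$, using the exponent-$n$ analogue of \eqref{E1u} (proved the same way) for both $(R_2+u)^n$ and $(R_1+v)^n$. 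After cancelling the zeroth-order term, multiplying by $n$ and collecting, one gets $R_2^{n-1}\!\int u+\tfrac{n-1}{2}R_2^{n-2}\!\int u^2-R_1^{n-1}\!\int v-\tfrac{n-1}{2}R_1^{n-2}\!\int v^2=O(\varepsilon)$; since $\int u^2,\int v^2\le\varepsilon^2\mathcal{H}^{n-1}(\mathbb{S}^{n-1})$, one may replace $R_i^{n-2}$ by $R_i^{n-1}$ in the quadratic terms at the cost of a further $O(\varepsilon)$ error, which is \eqref{moreover_2}.

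The routine but slightly delicate part is the bookkeeping in this last step: keeping track of which products of the expansions are genuinely $O(\varepsilon)$ — this is where $\|u\|_{W^{1,\infty}},\|v\|_{W^{1,\infty}}\le\varepsilon$ is used to turn any cubic-or-higher pointwise term into an $O(\varepsilon)$ contribution after integration — handling the dependence of the constants on the fixed radii, and separating off the low-dimensional cases $n=2,3$ in which the binomial expansions terminate and no third-order remainder is present. Beyond this careful accounting I do not expect any substantial obstacle.
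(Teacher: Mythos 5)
Your proof is correct and follows the same route as the paper's (very terse) argument: binomial/Taylor expansion of $(R+w)^{n-1}$ and $(R+w)^n$, the elementary square-root bound (the paper cites $\sqrt{1+x}\ge 1+\tfrac x2-\tfrac{x^2}{4}$, equivalent up to a constant to your $1+\tfrac p2-\sqrt{1+p}\le\tfrac{p^2}{8}$), and insertion into the perimeter and volume constraints. Your observation at the end — that the natural Taylor computation produces $R_i^{n-2}$, not the stated $R_i^{n-1}$, in the quadratic terms of \eqref{moreover_2}, and that the patch you need to match the printed form introduces a further dependence on $R_1,R_2$ in the constant — is a correct diagnosis of what is almost certainly a typo in the statement, so your derivation with $R_i^{n-2}$ should be regarded as the intended one.
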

\begin{proof}
The proofs of \eqref{E1u}-\eqref{moreover_2} can be immediately adapted from \cite{fuglede1989stability}. Inequalities   \eqref{E1u} and \eqref{E1v} easily follow, respectively, from
\begin{equation}
    \label{sviluppi}
(R_2+u)^{n-1}=\sum_{k=0}^{n-1}\binom{n-1}{k}u^kR_2^{n-1-k}\quad\text{and}\quad(R_1+v)^{n-1}=\sum_{k=0}^{n-1}\binom{n-1}{k}v^kR_1^{n-1-k}.
\end{equation}
Inequality \eqref{E2u} is a consequence of  inequality $\sqrt{1+x}\geq 1+\frac{x}{2}-\frac{x^2}{4}$.
Furthermore, using  the outer perimeter constraint
\[
\frac 1n \int_{\mathbb S^{n-1}} (R_2+u)^{n-1} \sqrt{1+\dfrac{|\nabla u|^2}{(R_2+u)^2}}d\mathcal H^{n-1}=\omega_nR_2^{n-1},
\]
 the first equality in \eqref{sviluppi} and \eqref{E2u}, we obtain \eqref{moreover_1}. 
 Finally,  inequality \eqref{moreover_2} follows by using the volume constraint
\[
\frac 1n \int_{\mathbb S^{n-1}} (R_2+u)^{n-1}- (R_1+v)^{n-1} d\mathcal H^{n-1}=\omega_n(R_2^{n-1}-R_1^{n-1})
\]
and the second equality in  \eqref{sviluppi}.
\end{proof}

The Poincaré inequality in this setting holds with constant $(n-1)$, but, since we are working with functions parametrizing nearly spherical sets, this constant can be improved to $2n$. For the proof we refer to \cite{fuglede1989stability} and \cite[Lem. 2.4]{ferone2015conjectured}.
\begin{lemma} %\label{poincare_lemma}
There exists a constant $C>0$ such that, for any parametrization $u\in W^{1,\infty}(\mathbb S^{n-1})$ of a $R-$nearly spherical set and for every $0<\varepsilon<R/2$  with $||u||_{W^{1,\infty}}\leq \varepsilon$, it holds
\begin{equation}
\label{poincare}
||\nabla_\tau u||_{L^2}^2\geq 2 n|| u||_{L^2}^2-\varepsilon C.
%||D_\tau u||_{L^2}^2+||D_\tau v||_{L^2}^2\geq 2 n \left(|| u||_{L^2}^2+||v||_{L^2}^2 \right)-\epsilon C
 \end{equation}
\end{lemma}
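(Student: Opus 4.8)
The idea is to expand the function $u \in W^{1,\infty}(\mathbb{S}^{n-1})$ in spherical harmonics and to exploit the perimeter constraint in order to control the low-order Fourier modes. Write $u = \sum_{k\ge 0} u_k$, where $u_k$ is the projection of $u$ onto the eigenspace of $-\Delta_{\mathbb{S}^{n-1}}$ associated with the eigenvalue $\mu_k = k(k+n-2)$. Then, by orthogonality, $\|u\|_{L^2}^2 = \sum_{k\ge 0}\|u_k\|_{L^2}^2$ and $\|\nabla_\tau u\|_{L^2}^2 = \sum_{k\ge 0}\mu_k\|u_k\|_{L^2}^2$. Since $\mu_0 = 0$, $\mu_1 = n-1$, and $\mu_k \ge 2n$ for all $k\ge 2$ (because $\mu_2 = 2n$), the naive bound gives only $\|\nabla_\tau u\|_{L^2}^2 \ge (n-1)\sum_{k\ge 1}\|u_k\|_{L^2}^2$. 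To do better, I would show that $\|u_0\|_{L^2}^2$ and $\|u_1\|_{L^2}^2$ are both of order $\varepsilon^3$ (hence absorbed into the error term $\varepsilon C$), so that the whole $L^2$ mass is, up to $O(\varepsilon^3)$, concentrated on modes $k\ge 2$, where the Rayleigh quotient is at least $2n$.

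The control of the zeroth mode $u_0 = \fint_{\mathbb{S}^{n-1}} u\, d\mathcal{H}^{n-1}$ comes directly from \eqref{moreover_1}: that estimate reads, after dividing by $R_2^{n-3}$, $\bigl| R_2^2 \int u + \tfrac{1}{2(n-1)}\int |\nabla_\tau u|^2 + \tfrac{n-2}{2}\int u^2 \bigr| \le C\varepsilon$, so $\int_{\mathbb{S}^{n-1}} u\, d\mathcal{H}^{n-1} = O(\varepsilon^2)$, whence $\|u_0\|_{L^2}^2 = \mathcal{H}^{n-1}(\mathbb{S}^{n-1})^{-1}\bigl(\int u\bigr)^2 = O(\varepsilon^4)$. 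For the first mode, I would use that the first-order spherical harmonics are the restrictions of linear functions $\xi \mapsto \xi\cdot e$; since the set $\Omega_0$ is here centered at the origin (by hypothesis $0\in\Omega_0$, and in our application one may first translate $\Omega_0$ so that its barycenter is the origin), the barycenter condition $\int_{\Omega_0} x\, dx = 0$ translates, via the nearly-spherical parametrization and the expansion $(R_2+u)^{n+1} = R_2^{n+1} + (n+1)R_2^n u + O(\varepsilon^2)$, into $\int_{\mathbb{S}^{n-1}} u\,\xi\, d\mathcal{H}^{n-1} = O(\varepsilon^2)$, and hence $\|u_1\|_{L^2}^2 = O(\varepsilon^4)$ as well. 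Combining, $\|u\|_{L^2}^2 = \sum_{k\ge 2}\|u_k\|_{L^2}^2 + O(\varepsilon^4)$, and therefore $\|\nabla_\tau u\|_{L^2}^2 \ge 2n\sum_{k\ge 2}\|u_k\|_{L^2}^2 \ge 2n\|u\|_{L^2}^2 - C\varepsilon$ after renaming the constant (note $O(\varepsilon^4) \le \varepsilon C$ for $\varepsilon$ bounded).

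The step I expect to be the most delicate is the treatment of the first mode: one must make sure the centering hypothesis is the right one and that the error in passing from the volume/barycenter identities to the statement $\int u\,\xi = O(\varepsilon^2)$ is genuinely quadratic in $\varepsilon$ — this requires a Taylor expansion of $(R_2+u)^{n+1}$ with quadratic remainder, entirely analogous to \eqref{E1u} in Lemma \ref{lemma2}. Everything else is the standard spherical-harmonics bookkeeping, and since the excerpt explicitly says the result can be taken from \cite{fuglede1989stability} and \cite[Lem.~2.4]{ferone2015conjectured}, I would simply cite those sources for the routine parts and present only the adaptation needed here, namely that the constants and error terms are uniform over parametrizations of $(R_1,R_2)$-nearly annular sets with $\|u\|_{W^{1,\infty}}\le\varepsilon$.
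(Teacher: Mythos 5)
Your argument is correct and matches the spherical-harmonics approach that the paper endorses by referencing \cite{fuglede1989stability} and \cite[Lem.~2.4]{ferone2015conjectured} in place of giving a proof: one controls the zeroth mode via the perimeter identity \eqref{moreover_1}, the first mode via the barycenter, and uses that all higher modes have Rayleigh quotient at least $2n$. The caveat you flag about the barycenter is the right one to raise — the definition of nearly spherical set only requires $0\in\Omega_0$, so the control of the first Fourier coefficient genuinely relies on the translation to the barycenter that the paper performs when it actually invokes this lemma (see Lemma~\ref{red}, where the admissible sets are recentered before the nearly-annular parametrization is used).
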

Finally, these last two  Lemmata provide  key estimates for the norms of functions parametrizing the boundary of a nearly spherical set, see  \cite{fuglede1989stability,fusco2015quantitative}.
\begin{lemma}\label{fugledee}
Let $u\in W^{1,\infty}(\mathbb{S}^{n-1})$ such that $\ds\int_{\mathbb{S}^{n-1}}u \;d\mathcal{H}^{n-1}=0$, then
\begin{eqnarray*}%\label{fuglede}
||u||_{L^\infty}^{n-1} \le 
\begin{cases}
\pi \|\nabla_\tau u\|_{L^2} & n=2\\
4||\nabla_\tau u||^2_{L^{2}} \log \frac{8e ||\nabla_\tau u||_{L^{\infty}}^{n-1}}{||\nabla_\tau u||_{L^{2}}^{2}} & n=3
\\[.2cm]
C(n)  ||\nabla_\tau u||_{L^{2}}^{2} ||\nabla_\tau u||_{L^{\infty}}^{n-3}\,\, & n\ge 4.
\end{cases}
\end{eqnarray*}
\end{lemma}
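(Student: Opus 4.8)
The plan is to deduce all three estimates from the classical pointwise Riesz--potential bound for Sobolev functions on $\mathbb{S}^{n-1}$, and then to split that potential at a scale $\delta$, controlling $\nabla_\tau u$ by its $L^\infty$ norm below $\delta$ and by its $L^2$ norm above $\delta$. The three regimes in the statement simply reflect the fact that $W^{1,2}(\mathbb{S}^{n-1})$ embeds into $L^\infty$ only when $n\le 2$, fails logarithmically when $n=3$, and fails by a power when $n\ge 4$, and the $L^\infty$ control on $\nabla_\tau u$ is exactly what bridges the gap; this is in essence Fuglede's scheme \cite{fuglede1989stability,fusco2015quantitative}, and the explicit constants come from a careful bookkeeping of it.

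The first step is to prove that, for every $\eta\in\mathbb{S}^{n-1}$,
\[
\big|u(\eta)-\overline u\big|\ \le\ C_1(n)\int_{\mathbb{S}^{n-1}}\frac{|\nabla_\tau u(\zeta)|}{|\eta-\zeta|^{\,n-2}}\;d\mathcal H^{n-1}(\zeta),\qquad \overline u:=\frac{1}{\mathcal H^{n-1}(\mathbb{S}^{n-1})}\int_{\mathbb{S}^{n-1}}u\,d\mathcal H^{n-1}=0 .
\]
Fixing $\eta$ and writing $u(\eta)-u(\zeta)$ (for $\zeta\neq-\eta$) as the integral of the tangential derivative of $u$ along the minimizing geodesic from $\eta$ to $\zeta$ --- legitimate since $u\in W^{1,\infty}$ --- then averaging over $\zeta$, passing to geodesic polar coordinates centred at $\eta$ (Jacobian $\sin^{n-2}\phi$) and applying Tonelli, one is left with the kernel $\big(\int_t^{\pi}\sin^{n-2}s\,ds\big)\sin^{2-n}t$ in the variable $t$ equal to the geodesic distance between $\eta$ and $\zeta$; this kernel is comparable to $t^{\,2-n}$, hence to $|\eta-\zeta|^{\,2-n}$, on the whole of $(0,\pi]$.

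Next I would evaluate this inequality at a point $\eta_0$ with $|u(\eta_0)|=M:=\|u\|_{L^\infty}$; replacing $u$ by $-u$ if needed (which alters neither the hypotheses nor the right-hand sides) we may assume $u(\eta_0)=M$, so that $M\le C_1(n)\int_{\mathbb{S}^{n-1}}|\nabla_\tau u(\zeta)|\,|\eta_0-\zeta|^{2-n}\,d\mathcal H^{n-1}$. For $\delta>0$ I split this integral over $\{|\eta_0-\zeta|<\delta\}$ and $\{|\eta_0-\zeta|\ge\delta\}$. On the near part one bounds $|\nabla_\tau u|\le\|\nabla_\tau u\|_{L^\infty}$ and uses $\int_{\{|\eta_0-\zeta|<\delta\}}|\eta_0-\zeta|^{2-n}\,d\mathcal H^{n-1}\le C_2(n)\,\delta$, so that contribution is $\le C_2(n)\,\delta\,\|\nabla_\tau u\|_{L^\infty}$. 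On the far part, Cauchy--Schwarz gives $\|\nabla_\tau u\|_{L^2}\big(\int_{\{|\eta_0-\zeta|\ge\delta\}}|\eta_0-\zeta|^{2(2-n)}\,d\mathcal H^{n-1}\big)^{1/2}$, and a direct computation in polar coordinates shows that the last integral is bounded by a dimensional constant when $n=2$, comparable to $\log(1/\delta)$ when $n=3$, and comparable to $\delta^{\,3-n}$ when $n\ge 4$. It then suffices to optimize in $\delta$. For $n=2$ no splitting is needed: the potential estimate itself reads $M\le C_1(2)\|\nabla_\tau u\|_{L^1(\mathbb{S}^{n-1})}\le C_1(2)\sqrt{2\pi}\,\|\nabla_\tau u\|_{L^2}$, and $C_1(2)\sqrt{2\pi}\le\pi$ (equivalently, integrate $\nabla_\tau u$ along the arc of length $\le\pi$ joining $\eta_0$ to a zero of $u$, which exists because $\overline u=0$). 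For $n\ge 4$ the choice $\delta\asymp\big(\|\nabla_\tau u\|_{L^2}/\|\nabla_\tau u\|_{L^\infty}\big)^{2/(n-1)}$ balances the two parts and produces $\|u\|_{L^\infty}^{\,n-1}\le C(n)\,\|\nabla_\tau u\|_{L^2}^{2}\,\|\nabla_\tau u\|_{L^\infty}^{\,n-3}$. For $n=3$ the choice $\delta\asymp\|\nabla_\tau u\|_{L^2}/\|\nabla_\tau u\|_{L^\infty}$, up to a logarithmic correction, gives $\|u\|_{L^\infty}^{2}\lesssim\|\nabla_\tau u\|_{L^2}^{2}\log\!\big(\|\nabla_\tau u\|_{L^\infty}^{2}/\|\nabla_\tau u\|_{L^2}^{2}\big)$, and a careful accounting of the constants in the splitting produces precisely the factor $4$ and the argument $8e\,\|\nabla_\tau u\|_{L^\infty}^{2}/\|\nabla_\tau u\|_{L^2}^{2}$ inside the logarithm. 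In each case one must additionally dispose of the degenerate regime where the optimal $\delta$ would exceed the diameter of $\mathbb{S}^{n-1}$: this happens only when $\|\nabla_\tau u\|_{L^2}$ is comparable to $\|\nabla_\tau u\|_{L^\infty}$, and there the crude bound $M\le\pi\|\nabla_\tau u\|_{L^\infty}$ already yields the three inequalities.

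The part I expect to be most delicate is the borderline case $n=3$: beyond the structural argument above, one has to pin down the exact constant $4$ and the $8e$ inside the logarithm and check that the near/far splitting interacts correctly with the $L^\infty$ bound at the critical scale $\delta\asymp\|\nabla_\tau u\|_{L^2}/\|\nabla_\tau u\|_{L^\infty}$. The one geometric point requiring care is the verification that the representation kernel $\big(\int_t^{\pi}\sin^{n-2}s\,ds\big)\sin^{2-n}t$ stays comparable to $t^{\,2-n}$ all the way up to $t=\pi$ (it actually decays to $0$ at the antipode, which is harmless), so that it is globally of Riesz type and the two integrals above admit the stated estimates.
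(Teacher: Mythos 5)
The paper itself does not prove this lemma but cites \cite{fuglede1989stability,fusco2015quantitative}, and your argument is precisely the one in those references: represent $u(\eta)-\bar u$ by a Riesz‑type potential of $|\nabla_\tau u|$ obtained by averaging the fundamental theorem of calculus along geodesics (equivalently, Fuglede's integration against the geodesic‑sphere means), split the potential at a scale $\delta$, bound the near part by $\delta\|\nabla_\tau u\|_{L^\infty}$ and the far part by Cauchy--Schwarz, and optimize in $\delta$, handling $n=2$ by direct arc integration to a zero of $u$ and the regime $\delta>\pi$ by the crude bound $\|u\|_{L^\infty}\le\pi\|\nabla_\tau u\|_{L^\infty}$. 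Your outline is correct and essentially identical to the standard proof; the only part left implicit is the bookkeeping that yields the exact constants $4$ and $8e$ in the $n=3$ case, which you have rightly flagged as the delicate point.
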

\begin{lemma}\label{utile}
Let $u\in W^{1,\infty}(\mathbb S^{n-1})$ be a parametrization of a $R-$nearly spherical set such that $||u||_{W^{1,\infty}}<\varepsilon$ for some $\varepsilon< R/2$,
then
\[
||\nabla_\tau u||_{\infty}\leq 3 ||u||_\infty^{\frac 12}.
\]
\end{lemma}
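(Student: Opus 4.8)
The plan is to exploit the constraint $\|u\|_{W^{1,\infty}(\mathbb S^{n-1})}<\varepsilon$, which bounds $u$ together with its tangential gradient, and to interpolate between an $L^\infty$ bound on $\nabla_\tau u$ and an $L^\infty$ bound on $u$ itself. Since $u$ parametrizes a nearly spherical set, $u$ is Lipschitz on the compact manifold $\mathbb S^{n-1}$, and the key observation is that a Lipschitz function on a compact manifold cannot be simultaneously small in sup-norm and have a large derivative somewhere: if $\nabla_\tau u$ attained a large value at a point $\xi_0$, then along a short geodesic arc issuing from $\xi_0$ in the direction of $\nabla_\tau u(\xi_0)$ the function $u$ would have to grow proportionally, contradicting the smallness of $\|u\|_\infty$ unless $\|\nabla_\tau u\|_\infty$ is itself controlled by $\|u\|_\infty^{1/2}$.

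First I would fix a point $\xi_0\in\mathbb S^{n-1}$ where $|\nabla_\tau u|$ is (nearly) maximal, set $M:=\|\nabla_\tau u\|_\infty$, and let $\gamma:[0,\ell]\to\mathbb S^{n-1}$ be the unit-speed geodesic with $\gamma(0)=\xi_0$ and $\dot\gamma(0)$ parallel to $\nabla_\tau u(\xi_0)$. Then $\frac{d}{dt}u(\gamma(t))\big|_{t=0}=M$. Using the Lipschitz bound on $\nabla_\tau u$ — here one uses $\|u\|_{W^{1,\infty}}<\varepsilon<R/2$ to keep the arc inside the chart and to control how fast the directional derivative can decrease — one gets $\frac{d}{dt}u(\gamma(t))\ge M - C t$ for a dimensional constant $C$ (coming from the second-order behaviour of $u$ along the geodesic, controlled by $\|\nabla_\tau u\|_\infty$ and the curvature of $\mathbb S^{n-1}$, both of order $1$). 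Integrating on $[0,t_\ast]$ with $t_\ast\sim M$ chosen so that the right-hand side stays $\gtrsim M$, we obtain
\[
u(\gamma(t_\ast))-u(\xi_0)\ \ge\ c\,M^2,
\]
hence $2\|u\|_\infty\ge c M^2$, which rearranges to $M\le C'\|u\|_\infty^{1/2}$; tracking constants carefully gives the stated factor $3$. (Alternatively, and perhaps more cleanly for the paper, one writes $u$ in normal coordinates around $\xi_0$ and applies the one-dimensional estimate $|g'(0)|^2\le 2\|g\|_\infty\,\|g''\|_\infty$ to the restriction of $u$ to a geodesic, absorbing the $\|g''\|_\infty$ term via $\|\nabla^2_\tau u\|_\infty\lesssim \|\nabla_\tau u\|_\infty\lesssim\varepsilon$.)

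The main obstacle is making the second-derivative control rigorous without assuming $u\in C^2$: the hypothesis only gives $u\in W^{1,\infty}$, so "$\nabla^2_\tau u$" need not exist pointwise. I would circumvent this by working directly with difference quotients of $\nabla_\tau u$ along the geodesic — which are bounded by $\|\nabla_\tau u\|_\infty$ times the geodesic distance plus a curvature term, since $\nabla_\tau u$ is itself Lipschitz with constant controlled by $\|\nabla_\tau u\|_{W^{1,\infty}}$... which is exactly where one must be careful, because $W^{1,\infty}$ of the gradient is not assumed. The correct route is instead the elementary one: the mean value / fundamental theorem of calculus applied to $t\mapsto u(\gamma(t))$ gives $|u(\gamma(t_\ast))-u(\xi_0)-M t_\ast|\le t_\ast\cdot\mathrm{osc}_{[0,t_\ast]}\!\big(\tfrac{d}{dt}u\circ\gamma\big)$, and this oscillation is bounded by $\|\nabla_\tau u\|_\infty$ (trivially) — but we need it bounded by $C t_\ast$, which forces us to choose $t_\ast$ proportional to $\|u\|_\infty^{1/2}$ from the start and then verify consistency. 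This bootstrap-type argument, together with the reference to \cite{fuglede1989stability,fusco2015quantitative} where precisely this estimate is established, is what I would write out; the constant $3$ then comes from optimizing the length of the arc and is dimension-independent because only the unit sphere's geometry (not $n$) enters the leading-order computation.
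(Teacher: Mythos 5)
The argument as written has a genuine gap, and you in fact put your finger on it yourself in the final paragraph: under the stated hypothesis $u\in W^{1,\infty}(\mathbb{S}^{n-1})$ there is no control on second derivatives of $u$, and the bootstrap you sketch does not close. The obstruction is not merely technical. Choosing $t_\ast\sim\|u\|_\infty^{1/2}$ does not help, because the oscillation of $t\mapsto \tfrac{d}{dt}\,u(\gamma(t))$ on an interval of length $t_\ast$ is in general only bounded by $2\|\nabla_\tau u\|_\infty$, not by $Ct_\ast$, and that bound is useless here. Indeed, with only $\|u\|_{W^{1,\infty}}<\varepsilon$ the claimed inequality is \emph{false}: on $\mathbb{S}^1$ take $u(\theta)=a\sin(b\theta)$ with $b\in\mathbb{N}$; then $\|u\|_\infty=a$ and $\|\nabla_\tau u\|_\infty=ab$, so picking $a$ small and $b\sim R/(3a)$ gives $\|u\|_{W^{1,\infty}}=ab<R/2$ while $\|\nabla_\tau u\|_\infty/\|u\|_\infty^{1/2}=b\sqrt{a}\sim R/(3\sqrt{a})$ can be made arbitrarily large. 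So no interpolation argument that uses only $\|u\|_\infty$ and $\|\nabla_\tau u\|_\infty$ can succeed.

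The missing ingredient is \emph{convexity} of $\Omega_0$, which is implicit throughout the paper (every $\Omega_0$ in its admissible class is convex) and is essential in the cited references \cite{fuglede1989stability,fusco2015quantitative}, where this lemma is proved. Convexity of $\Omega_0$ forces the principal curvatures of $\partial\Omega_0$ to be nonnegative, which, in terms of the radial profile $r=R+u$, yields a \emph{one-sided} second-order bound along great circles (in dimension two, $rr''\le r^2+2(r')^2$, i.e.\ $u''\le C(R,\varepsilon)$ in the distributional sense, and similarly in higher dimensions restricting to geodesics). With that one-sided bound the plan you outlined does work: if $M:=\|\nabla_\tau u\|_\infty$ is attained in direction $\dot\gamma(0)$, then $\tfrac{d}{dt}u(\gamma(t))\ge M-Ct$ by the one-sided bound, and integrating over $[0,M/C]$ gives $2\|u\|_\infty\ge M^2/(2C)$, i.e.\ $M\lesssim\|u\|_\infty^{1/2}$. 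So the structure of your argument is the right one, but it must be driven by the convexity-induced concavity-type estimate on $u$, not by an interpolation inequality $|g'|^2\le 2\|g\|_\infty\|g''\|_\infty$ whose right-hand side you cannot control from $W^{1,\infty}$ alone.
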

\subsection{Some properties of the hybrid asymmetry}
In this Section we focus on some properties of the hybrid asymmetry $\alpha(\Omega)$, see Definition \ref{inner_asymmetry_def}. First of all, we show that $\alpha(\Omega)=0$ if and only if $\Omega=A_{R_1,R_2}$.
 
\begin{remark}
\label{renna}
We point out that  $\tilde{\mathcal A}(\Theta;\Omega_0)\geq 0$, but it could be zero even if $\Omega_0\neq B_{R_2}$ and $\Theta\neq K_{\Omega_0}$. Indeed, if 
$$d_\mathcal{H}(\Theta,K_{\Omega_0})<t_{\Omega_0}-(R_2-R_1),$$
then $\tilde{\mathcal A}(\Theta;\Omega_0)=0$. Nevertheless, in that case, $\Omega_0\neq B_{R_2}$, thus the global asymmetry  satisfies 
\begin{equation*}
   \alpha(\Omega)=\max\left\{g(\mathcal{A}_\H(\Omega_0)),\tilde{\mathcal{A}}(\Theta;\Omega_0)\right\}= g(\mathcal{A}_\H(\Omega_0))>0.
\end{equation*}
 In other words, if $\Theta$ has a low Hausdorff distance from $K_{\Omega_0}$, then the larger term in the computation of $\alpha(\Omega)$ is $g(\mathcal{A}(\Omega_0))$. In particular we point out that, provided that $\Theta\neq K_{\Omega_0}$, $\tilde{\mathcal A}(\Theta;\Omega_0)$ could be zero if and only if $\Omega_0\neq B_{R_2}$. 
 
 On the other hand, we stress the fact  that $t_{\Omega_0}=R_2-R_1$ if and only if $\Omega_0=B_{R_2}$. %, thus the inner weak-asymmetry term is surely strictly positive in the outer worst case ($\Omega_0=B_{R_2}$).
 So, in this case, $K_{\Omega_0}=B_{R_1}$ and any admissible hole $\Theta\neq B_{R_1}$ satisfies $\tilde{\mathcal{A}}(\Theta;B_{R_2})>0$, since $w_{\Omega_0}=z\gneq z_m$ on $\Theta\setminus B_{R_1}$. Consequently, we have
 \begin{equation*}
   \alpha(\Omega)=\max\left\{g(\mathcal{A}_\H(B_{R_2})),\tilde{\mathcal{A}}(\Theta;B_{R_2})\right\}= \tilde{\mathcal{A}}(\Theta;B_{R_2})>0.
\end{equation*}
So, it actually holds $\alpha(\Omega)=0$ if and only if $\Omega=A_{R_1,R_2}$.
\end{remark}

Now, we justify the fact that $\alpha(\Omega)$ is actually an asymmetry, namely that the set $\Omega$ tends to the spherical shell $A_{R_1,R_2}$ in some suitable sense if and only if the hybrid asymmetry $\alpha(\Omega)$ tends to zero.

\begin{proposition}\label{tendea0}
Let $0<R_1<R_2<+\infty$ and $\Omega_j\subset \R^n$ in the class $\mathcal{T}_{R_1,R_2}$. Then 
\[
\lim_{j\to+\infty} d_{\mathcal H}(\Omega_j,A_{R_1,R_2}) =0  \iff \lim_{j\to+\infty}\alpha(\Omega_j)=0.
\]
\end{proposition}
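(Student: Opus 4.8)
The plan is to establish the two implications separately, exploiting the fact that $\alpha(\Omega_j)=\max\{g(\mathcal A_\H((\Omega_0)_j)),\tilde{\mathcal A}(\Theta_j;(\Omega_0)_j)\}$ is a maximum of two nonnegative quantities, so that $\alpha(\Omega_j)\to0$ is equivalent to both $\mathcal A_\H((\Omega_0)_j)\to0$ and $\tilde{\mathcal A}(\Theta_j;(\Omega_0)_j)\to0$ (here I use that $g$ is continuous, increasing, and vanishes only at $0$). Throughout I will write $\Omega_j=(\Omega_0)_j\setminus\overline{\Theta_j}$ and use that the class $\mathcal T_{R_1,R_2}$ enforces the uniform constraints $|\Omega_j|=|A_{R_1,R_2}|$, $P((\Omega_0)_j)=P(B_{R_2})$, $d_\H(\Theta_j,(\Omega_0)_j)\ge\vartheta_{R_1,R_2}$ and $\rho(\Theta_j)\ge\vartheta_{R_1,R_2}$; these compactness-type bounds are what keep everything under control. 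A useful first observation is that $d_\H(\Omega_j,A_{R_1,R_2})\to0$ (Hausdorff convergence of the holed sets) is equivalent to the pair $d_\H((\Omega_0)_j,B_{R_2})\to0$ and $d_\H(\Theta_j,B_{R_1})\to0$, because the outer and inner boundaries are separated by a fixed gap and stay convex.

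For the forward implication ($d_\H(\Omega_j,A_{R_1,R_2})\to0\Rightarrow\alpha(\Omega_j)\to0$): since $(\Omega_0)_j\to B_{R_2}$ in Hausdorff distance and $P((\Omega_0)_j)=P(B_{R_2})$, the definition \eqref{fraenkel_asy} immediately gives $\mathcal A_\H((\Omega_0)_j)\to0$, whence $g(\mathcal A_\H((\Omega_0)_j))\to0$. For the second term, I would argue that the inner parallel set $K_{(\Omega_0)_j}$ (the sub-level set of the distance function with $|K_{(\Omega_0)_j}|=|\Theta_j|=|B_{R_1}|$) converges in Hausdorff distance to $B_{R_1}$: indeed $t_{(\Omega_0)_j}\to R_2-R_1$ by the volume normalization and continuity of $t\mapsto|\{d>t\}|$, and the distance functions $d_j$ converge locally uniformly to the distance function of $B_{R_2}$. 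Likewise the web-function $w_{(\Omega_0)_j}$, being $G\circ d_j$ truncated at level $z_m$, converges locally uniformly (together with its gradient modulus, since $|\nabla w_{(\Omega_0)_j}|=\ell(w_{(\Omega_0)_j})$) to the radial eigenfunction profile on $A_{R_1,R_2}$ extended by $z_m$. Since $\Theta_j\to B_{R_1}$ and $K_{(\Omega_0)_j}\to B_{R_1}$, the symmetric difference $\Theta_j\setminus K_{(\Omega_0)_j}$ has vanishing measure, and on a shrinking neighborhood of $\partial B_{R_1}$ the integrand $|\nabla w_{(\Omega_0)_j}|^2+w_{(\Omega_0)_j}^2-z_m^2$ is uniformly bounded; hence $\tilde{\mathcal A}(\Theta_j;(\Omega_0)_j)\to0$ by dominated convergence. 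Therefore $\alpha(\Omega_j)\to0$.

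For the reverse implication ($\alpha(\Omega_j)\to0\Rightarrow d_\H(\Omega_j,A_{R_1,R_2})\to0$): from $g(\mathcal A_\H((\Omega_0)_j))\to0$ and the monotonicity of $g$ we get $\mathcal A_\H((\Omega_0)_j)\to0$. A known stability property of the Hausdorff asymmetry for convex sets with fixed perimeter (obtainable via a Blaschke-selection/compactness argument, using that any subsequential Hausdorff limit of the $(\Omega_0)_j$ is convex with perimeter $P(B_{R_2})$ and zero asymmetry, hence a ball, and by the volume pinch it must be $B_{R_2}$) then yields $d_\H((\Omega_0)_j,B_{R_2})\to0$. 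Once the outer boxes converge, by the argument of the forward direction $K_{(\Omega_0)_j}\to B_{R_1}$ and $w_{(\Omega_0)_j}\to z$ (extended by $z_m$) locally uniformly. Now suppose, for contradiction, that $\Theta_j\not\to B_{R_1}$: up to a subsequence, by Blaschke selection $\Theta_j\to\Theta_\infty$ for some convex set with $\overline{\Theta_\infty}\subset B_{R_2}$, $|\Theta_\infty|=|B_{R_1}|$, $\rho(\Theta_\infty)\ge\vartheta_{R_1,R_2}$, and $\Theta_\infty\ne B_{R_1}$. Then $\Theta_\infty\setminus\overline{B_{R_1}}$ has positive measure, and on that region $w\equiv z>z_m$ strictly (as in Remark \ref{renna}), so $\liminf_j\tilde{\mathcal A}(\Theta_j;(\Omega_0)_j)\ge\int_{\Theta_\infty\setminus\overline{B_{R_1}}}(|\nabla z|^2+z^2-z_m^2)\,dx>0$ by Fatou, contradicting $\tilde{\mathcal A}(\Theta_j;(\Omega_0)_j)\to0$. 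Hence $\Theta_j\to B_{R_1}$, and combined with $(\Omega_0)_j\to B_{R_2}$ this gives $d_\H(\Omega_j,A_{R_1,R_2})\to0$.

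The main obstacle is the reverse direction, specifically controlling the hole: one must pass from smallness of the \emph{weak} weighted term $\tilde{\mathcal A}(\Theta_j;(\Omega_0)_j)$ — which is insensitive to the part of $\Theta_j$ lying inside $K_{(\Omega_0)_j}$ — to genuine Hausdorff convergence $\Theta_j\to B_{R_1}$. The compactness/contradiction scheme above handles this, but it crucially relies on the uniform lower bounds $\rho(\Theta_j)\ge\vartheta_{R_1,R_2}$ and $d_\H(\Theta_j,(\Omega_0)_j)\ge\vartheta_{R_1,R_2}$ built into $\mathcal T_{R_1,R_2}$ to prevent the limiting hole from degenerating or escaping to the outer boundary, and on the strict positivity $z>z_m$ away from $\partial B_{R_1}$ together with the already-established convergence $w_{(\Omega_0)_j}\to z$ to make the Fatou lower bound effective.
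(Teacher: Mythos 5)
Your proof is correct, and it arrives at the same conclusion via a genuinely different route from the paper. The paper treats the forward implication ($d_\mathcal{H}\to 0\Rightarrow\alpha\to 0$) as immediate from Definition \ref{inner_asymmetry_def}, whereas you spell out the convergence of $K_{(\Omega_0)_j}$, $t_{(\Omega_0)_j}$ and $w_{(\Omega_0)_j}$, which is a harmless (and arguably useful) elaboration. For the reverse implication, the paper argues \emph{directly} by contradiction: assuming $d_\mathcal{H}(\Theta_j,B_{R_1})>c$, it splits into two explicit cases (the hole recedes inward, killed by the volume constraint; the hole protrudes outward, killed by a quantitative lower bound on $\tilde{\mathcal A}$ obtained via the inner parallel set $((\Omega_0)_j)_{R_2-R_1}$ and the pointwise lower bounds $w_{(\Omega_0)_j}>G(R_2-R_1-c/2)$, $|\nabla w_{(\Omega_0)_j}|$ uniformly positive). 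You instead package the same dichotomy through a Blaschke-selection/compactness argument followed by Fatou's lemma: extract a Hausdorff-convergent subsequence $\Theta_j\to\Theta_\infty$, observe that the volume constraint forces $|\Theta_\infty|=|B_{R_1}|$ (ruling out the paper's case (i)), and then the strict positivity $z>z_m$ on $\Theta_\infty\setminus\overline{B_{R_1}}$ gives $\liminf\tilde{\mathcal A}>0$ (ruling out case (ii)). Your soft-analysis route is cleaner and shorter, and it makes explicit that the constraints $\rho(\Theta_j)\ge\vartheta_{R_1,R_2}$ and $d_\mathcal{H}(\Theta_j,(\Omega_0)_j)\ge\vartheta_{R_1,R_2}$ are exactly what prevents the limit hole from degenerating; the paper's route avoids passing to a limit and keeps all the estimates quantitative, which is more in the spirit of the stability analysis elsewhere in the paper. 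One small point, shared by the paper, is that $\mathcal A_\mathcal{H}$ is defined with an infimum over centers and the class $\mathcal T_{R_1,R_2}$ has no barycenter normalization, so strictly speaking both arguments are implicitly \lq\lq up to translation\rq\rq; this does not affect correctness but could be stated once.
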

\begin{proof}
The necessary implication directly follows from Definition \ref{inner_asymmetry_def}. Therefore, it only remains to prove the converse condition.

Let $\{\Omega_j\}_{j\in\N}$ be a sequence such that $\alpha(\Omega_j)\to 0$.
Since $\Omega_j\in \mathcal T_{R_1,R_2}$ for any $j\in\N$, then $\Omega_j=(\Omega_0)_j\setminus \Theta_j$. Consequently, from Definition \ref{inner_asymmetry_def}, it is clear that $d_{\mathcal H}({\Omega_0}_j, B_{R_2})\to 0$.

By contradiction, let us assume that $d_{\mathcal H}(\Theta j, B_{R_1})>c$.  We have two possibilities: the difference between $\Theta$ and $B_{R_1}$ is more concentrated either inside or outside $B_{R_1}$. Specifically
\begin{enumerate}
\item[(i)] there exists $x_j\in \Theta_j \cap B_{R_1}$ such that $d(x_j,B_{R_1})>c$ and $d_{\mathcal H}(\partial \Theta_j\setminus B_{R_1},B_{R_1})\to 0$;
\item[(ii)] there exists $x_j\in \Theta_j \setminus B_{R_1}$ such that $d(x_j,B_{R_1})>c$.
\end{enumerate}

In the case (i), the volume constraint gives the contradiction. Indeed, there exists $\bar j$ such that $|\Theta_j|<|B_{R_1}|-c$, for any $j>\bar j$. From the volume constraint, we have an absurd, since it would be $|\Omega_j|\ge|A_{R_1,R_2}|+c$.

Now, we consider the case (ii), where a key role is played by the set $({\Omega_0}_j)_{R_2-R_1}$, in which, by the definition \eqref{GFerone}, the test function $w_{\Omega_0}$ is constant. 

We point out that, in view of the behavior of the outer asymmetry, we have $d_{\mathcal H}(B_{R_1}, ({\Omega_0}_j)_{R_2-R_1})\to 0$; in particular, there exists $c>0$ such that $d_{\mathcal H}(({\Omega_0}_j)_{R_2-R_1},B_{R_1})<c/2$. 
This inequality, together with the assumption (ii), implies that there exists two positive constants $c$ and $c'$ such that $d_{\mathcal H}(({\Omega_0}_j)_{R_2-R_1},\Theta_j)>c/2$ and, in view of the inradius constraint in the class $\mathcal{T}_{R_1,R_2}$, it holds $|\Theta_j\setminus ({\Omega_0}_j)_{R_2-R_1}|>c'/2$. 

Since, by construction and by the volume constraint, we have that $K_{(\Omega_0)_j}\subseteq ({\Omega_0}_j)_{R_2-R_1}$, then
$d(x)< R_2-R_1-c/2$ and hence $w_{(\Omega_0)_j}(x)>G(R_2-R_1-c/2)$ on $\Theta_j\setminus K_{(\Omega_0)_j}$ (and $|\nabla w_{(\Omega_0)_j}|
$ is uniformly bounded from below by a positive constant as well on $\Theta_j\setminus K_{(\Omega_0)_j}$). Consequently, there exists $c''>0$ such that 
$$\tilde{\mathcal{A}}(\Theta_j,{\Omega_0}_j)=\int_{\Theta_j\setminus K_{(\Omega_0)_j}}(|\nabla w_{(\Omega_0)_j}^2|^2+w_{(\Omega_0)_j}^2-z_m^2)\  dx>c''>0,$$
that is absurd.

Therefore, in both cases we contradict the assumption $d_{\mathcal H}(\Theta j, B_{R_1})>c$. Hence we conclude.
\end{proof}

More generally, one can observe that
\[
\lim_{j\to+\infty}\alpha(\Omega_j)=0\iff \lim_{j\to+\infty} d_{\mathcal H}({\Omega_0}_j,B_{R_2}) =0\quad \text{and}\quad \lim_{j\to+\infty}|\Theta_j\Delta B_{R_1}|=0.
\]

Let us observe that, if we denote by $\chi_{E}$ the characteristic function of a set $E$, then the weak weighted Fraenkel asymmetry \eqref{eq:atilde} can be also written as
\[
\tilde{\mathcal{A}}(\Theta,{\Omega_0})=\int_{\R^n}\chi_{\Theta\setminus K_{\Omega_0}}(|\nabla w_{(\Omega_0)}|^2+w_{\Omega_0}^2-z_m^2)dx=Vol_{(|\nabla w_{(\Omega_0)}|^2+w_{\Omega_0}^2-z_m^2)}(\Theta\Delta K_{\Omega_0}).
\]

Finally, it is worth noticing that \[
\tilde{\mathcal{A}}(\Theta,{\Omega_0})\geq 2 z_m ||w_{\Omega_0}-z_m||_{L^1(\Theta\setminus K_{\Omega_0})}.
\]

\section{A quantitative result for a Steklov-type problem}
\label{Steklov_sec}
The quantitative results for the Robin eigenvalue problem with negative boundary parameter are often solved through Steklov type problems (see \cite{bandle2015second,
cito2021quantitative,ferone2015conjectured}). 
This strategy is suggested by a similar behavior of the functionals in terms of monotonicity and of eigenfunctions. Indeed, as a first trivial link between the two functionals, one can observe that the Rayleigh quotients of both the Robin and the Steklov first  eigenvalues are monotonically decreasing with respect to the boundary integral.

The strategy of the proof is inspired by \cite{cito2021quantitative,ferone2015conjectured}, where the authors relate the Raylegh quotient of a suitable Steklov-type eigenvalue problem with  a ratio involving only the boundary integral terms. Subsequently, the suitable representations of the outer and inner boundary  are useful to estimate the difference between the eigenvalues on the radial and on the quasi-radial shape.

\subsection{A Steklov-type problem}%\label{sub_sec_Steklov_pb} 
The auxiliary
problem we deal with is the following:
\begin{equation}\label{eig_pezzotto}	
\sigma_1( \Omega)=  \min_{\substack{\psi\in H^{1}(\Omega)\\ \psi\not \equiv0}} \dfrac{\ds\int _{\Omega}|\nabla\psi|^2+\int _{\Omega}\psi^2\;dx}{\ds\int_{\partial\Omega_0}\psi^2\;d\mathcal H^{n-1}}.
\end{equation}
If $w\in H^1(\Omega)$ is a minimizer of \eqref{pezzotto_sistema}, then it satisfies:
\begin{equation}
\label{pezzotto_sistema}
\begin{cases}
-\Delta w+w=0 & \mbox{in}\ \Omega \vspace{0.2cm}\\
\dfrac{\partial w}{\partial \nu} =\sigma_1(\Omega) w  &\mbox{on}\ \partial \Omega_0 \vspace{0.2cm}\\ 
\dfrac{\partial w}{\partial \nu} =0 &\text{on }\partial \Theta.
\end{cases}
\end{equation}
If  $\Omega$ is the spherical shell $A_{R_1,R_2}=B_{R_2}\setminus\overline{B}_{R_1}$, the unique solution (up to multiplicative constant) of problem \eqref{eig_pezzotto} is radial and it is given by
\[
\mathcal{Z}(x)=\Psi(|x|),
\]
where
\begin{equation}
\label{exact_sol_annulus}
\Psi(r)=r^{1-\frac{n}{2}} K_{\frac{n}{2}}(R_1) I_{\frac{n}{2}-1}(r)+  r^{1-\frac{n}{2}} I_{\frac{n}{2}}(R_1) K_{\frac{n}{2}-1}(r)\quad \forall r\in[R_1,R_2].
\end{equation}

The starting point of our analysis is  the following upper bound for $\sigma_1(\Omega)$, only involving outer boundary integral terms.
\begin{lemma}
\label{lemma1}
It holds that
\begin{equation}
\label{quotient}
\sigma_1(\Omega)\leq \dfrac{\displaystyle\int_{\partial \Omega_0} \mathcal{Z} \dfrac{\partial \mathcal{Z}}{\partial \nu} \;d\mathcal{H}^{n-1}}{\displaystyle\int_{\partial \Omega_0} \mathcal{Z}^2\;d\mathcal{H}^{n-1}}=: \dfrac{N(\Omega)}{D(\Omega)},
\end{equation}
where $\mathcal{Z}(x)=\Psi(|x|)$ is defined in \eqref{exact_sol_annulus}. Moreover, equality  in \eqref{quotient} holds iff $\Omega=A_{R_1,R_2}$, that is the spherical shell with the same volume as $\Omega$ and such that $P(\Omega_0)=P(B_{R_2})$.
\end{lemma}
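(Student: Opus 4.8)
The plan is to test the variational characterization \eqref{eig_pezzotto} of $\sigma_1(\Omega)$ with the radial profile $\mathcal Z$ of \eqref{exact_sol_annulus} — the exact minimizer of the Steklov-type problem on the reference shell $A_{R_1,R_2}$ — and then to convert the bulk part of its Rayleigh quotient into a boundary integral by exploiting the equation $\mathcal Z$ satisfies. The only genuine difficulty is the treatment of the inner boundary $\partial\Theta$, where a Neumann condition holds.

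First, extend $\mathcal Z(x)=\Psi(|x|)$ to all radii $r>0$ by the same formula \eqref{exact_sol_annulus}; then $\mathcal Z$ solves $-\Delta\mathcal Z+\mathcal Z=0$ in $\R^n\setminus\{0\}$, and since $0\in\Theta$ and $\Omega_0$ is bounded, $\overline\Omega$ is a compact subset of $\R^n\setminus\{0\}$, so $\mathcal Z$ is smooth near $\overline\Omega$, lies in $H^1(\Omega)$, and has $\int_{\partial\Omega_0}\mathcal Z^2\,d\mathcal{H}^{n-1}>0$. Using $\mathcal Z$ as competitor in \eqref{eig_pezzotto} bounds $\sigma_1(\Omega)$ by the ratio of $\int_\Omega(|\nabla\mathcal Z|^2+\mathcal Z^2)\,dx$ to $\int_{\partial\Omega_0}\mathcal Z^2\,d\mathcal{H}^{n-1}$, so it suffices to show that the numerator is at most $N(\Omega)$.

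Next, apply the divergence theorem to the field $\mathcal Z\nabla\mathcal Z$ on the Lipschitz domain $\Omega$ and use $\Delta\mathcal Z=\mathcal Z$, obtaining $\int_\Omega(|\nabla\mathcal Z|^2+\mathcal Z^2)\,dx=\int_{\partial\Omega}\mathcal Z\,\partial_\nu\mathcal Z\,d\mathcal{H}^{n-1}$, with $\nu$ the outer normal of $\Omega$. The $\partial\Omega_0$ part is precisely $N(\Omega)$; on $\partial\Theta$ the outer normal of $\Omega$ is $-\nu_\Theta$, so that part contributes $-\int_{\partial\Theta}\mathcal Z\,\partial_{\nu_\Theta}\mathcal Z\,d\mathcal{H}^{n-1}$, and \eqref{quotient} follows once this inner integral is shown to be nonnegative. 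This is the main obstacle, and it is exactly where the Neumann condition on $\partial\Theta$ blocks the classical Fuglede scheme. To handle it, note $\nabla\mathcal Z(x)=\Psi'(|x|)\,x/|x|$, so on $\partial\Theta$ one has $\mathcal Z\,\partial_{\nu_\Theta}\mathcal Z=\Psi(|x|)\Psi'(|x|)\,(x\cdot\nu_\Theta)/|x|$, where $x\cdot\nu_\Theta\ge0$ since $\Theta$ is convex with $0\in\Theta$ and $\Psi>0$; hence only the sign of $\Psi'$ matters. From the derivation rules \eqref{besselI_rule} and \eqref{besselK_rule} one computes $\Psi'(r)=r^{1-n/2}(K_{n/2}(R_1)I_{n/2}(r)-I_{n/2}(R_1)K_{n/2}(r))$, which vanishes at $r=R_1$ (the Neumann condition on the inner sphere of the shell) and is positive for $r>R_1$ because $r\mapsto I_{n/2}(r)/K_{n/2}(r)$ is strictly increasing; thus $\Psi'\ge0$ on $[R_1,+\infty)$ ($\mathcal Z$ is radially nondecreasing outside $B_{R_1}$) and the integrand is nonnegative on $\partial\Theta\cap\{|x|\ge R_1\}$. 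On $\partial\Theta\cap\{|x|<R_1\}$ one concludes by repeating the argument with the competitor $\widehat{\mathcal Z}$ equal to $\mathcal Z$ outside $B_{R_1}$ and to the constant $\Psi(R_1)$ inside $B_{R_1}$: this is $C^1$ across $\partial B_{R_1}$ precisely because $\Psi'(R_1)=0$, its gradient vanishes inside $B_{R_1}$ so the corresponding inner integral drops, and the extra interface term on $\partial B_{R_1}\cap\Omega$ again cancels by $\Psi'(R_1)=0$. On the technical side, the delicate point is verifying the recurrence and monotonicity identities, since $\Psi$ is a genuine linear combination of two Bessel functions rather than a single one.

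Finally, for the equality statement: if $\Omega=A_{R_1,R_2}$ then $\mathcal Z$ is the true minimizer of \eqref{eig_pezzotto}, the inner integral vanishes by $\Psi'(R_1)=0$, and all inequalities above are identities. Conversely, equality in \eqref{quotient} forces $\mathcal Z$ to attain $\sigma_1(\Omega)$, hence to solve \eqref{pezzotto_sistema}; in particular $\partial_\nu\mathcal Z=0$ on $\partial\Theta$. Since $\Psi'(|x|)\neq0$ for $|x|\neq R_1$ while $x\cdot\nu_\Theta>0$ for $\mathcal{H}^{n-1}$-almost every $x\in\partial\Theta$ ($0$ being interior to the convex set $\Theta$), this forces $\partial\Theta\subseteq\partial B_{R_1}$, i.e. $\Theta=B_{R_1}$. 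Then $|\Omega|=|A_{R_1,R_2}|$ and $P(\Omega_0)=P(B_{R_2})$ yield $|\Omega_0|=|B_{R_2}|$, so the equality case of the isoperimetric inequality makes $\Omega_0$ a ball of radius $R_2$, centered at the origin since $\mathcal Z$ is radial about $0$; hence $\Omega=A_{R_1,R_2}$.
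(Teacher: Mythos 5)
The paper's proof is very short: it derives, for the first eigenfunction $w$ of \eqref{pezzotto_sistema}, the boundary quotient formula $\sigma_1(\Omega)=\int_{\partial\Omega_0}w\,\partial_\nu w\big/\int_{\partial\Omega_0}w^2$, promotes this to the variational characterization \eqref{variational_steklov}, and then tests with $\mathcal Z$. Your route is different in that you stay with the original Rayleigh quotient \eqref{eig_pezzotto}, test with $\mathcal Z$, and then try to pass to $N(\Omega)/D(\Omega)$ by the divergence theorem. In doing so you correctly isolate the real crux that the paper's two-line argument glosses over: after integrating by parts on $\Omega$, the contribution $-\int_{\partial\Theta}\mathcal Z\,\partial_{\nu_\Theta}\mathcal Z\,d\mathcal H^{n-1}$ survives, and this is not a priori nonpositive because $\Psi'<0$ for $r<R_1$.

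However, there is a genuine gap in the patch you propose for the portion $\partial\Theta\cap B_{R_1}$. If you replace $\mathcal Z$ by the $C^1$ competitor $\widehat{\mathcal Z}$ (equal to $\Psi(R_1)$ inside $B_{R_1}$), then indeed $\nabla\widehat{\mathcal Z}$ vanishes inside $B_{R_1}$ and the interface integral on $\partial B_{R_1}\cap\Omega$ cancels thanks to $\Psi'(R_1)=0$. But you have dropped the zero-order term: the Rayleigh numerator for $\widehat{\mathcal Z}$ contains $\int_{\Omega\cap B_{R_1}}\widehat{\mathcal Z}^2\,dx=\Psi(R_1)^2\,|\Omega\cap B_{R_1}|$, which is strictly positive whenever $\Omega\cap B_{R_1}\neq\emptyset$. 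Concretely, splitting $\Omega$ along $\partial B_{R_1}$ and using the divergence theorem on $\Omega_0\setminus\overline{B_{R_1}}$ gives
\begin{equation*}
\int_\Omega\big(|\nabla\widehat{\mathcal Z}|^2+\widehat{\mathcal Z}^2\big)\,dx
= N(\Omega)-\int_{\Theta\setminus\overline{B_{R_1}}}\big(|\nabla\mathcal Z|^2+\mathcal Z^2\big)\,dx
+\Psi(R_1)^2\,\big|B_{R_1}\setminus\overline{\Theta}\big|,
\end{equation*}
so the desired bound $\int_\Omega(|\nabla\widehat{\mathcal Z}|^2+\widehat{\mathcal Z}^2)\,dx\le N(\Omega)$ is equivalent to $\Psi(R_1)^2\,|B_{R_1}\setminus\Theta|\le\int_{\Theta\setminus B_{R_1}}(|\nabla\mathcal Z|^2+\mathcal Z^2)\,dx$. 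Since the outer perimeter constraint forces $|\Omega_0|\le|B_{R_2}|$ and hence $|\Theta|\le|B_{R_1}|$, one has $|B_{R_1}\setminus\Theta|\ge|\Theta\setminus B_{R_1}|$, and in the extreme case $\Theta\subsetneq B_{R_1}$ the right-hand side is zero while the left-hand side is strictly positive — the argument fails. Note moreover that your test $\mathcal Z\,\partial_{\nu_\Theta}\mathcal Z\ge0$ on $\partial\Theta\cap\{|x|<R_1\}$ is false in general: when $\Theta$ is a small ball centred at the origin, $\int_{\partial\Theta}\mathcal Z\,\partial_{\nu_\Theta}\mathcal Z=\Psi(\epsilon)\Psi'(\epsilon)\,\mathcal H^{n-1}(\partial B_\epsilon)<0$. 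So neither the plain $\mathcal Z$ nor your $\widehat{\mathcal Z}$ directly yields the estimate in this regime, and an additional idea (some use of the volume constraint to balance $\Psi(R_1)^2\,|B_{R_1}\setminus\Theta|$ against the boundary term on $\partial\Omega_0$, or a different choice of competitor) is needed to close the proof of Lemma \ref{lemma1}.

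Two smaller remarks. Your closing comment that "the delicate point is verifying the recurrence and monotonicity identities" is off-target for this particular lemma: all you need here about $\Psi$ is $\Psi>0$, $\Psi'(R_1)=0$ and the sign of $\Psi'$ on either side of $R_1$, which is immediate from \eqref{mu_}; the heavier Bessel-function bookkeeping enters only in Lemmata \ref{lemma_amos} and \ref{A0A1A2_lemma}. And your equality discussion, while morally right, is conditional on the inequality chain being valid in every case, which — per the gap above — it currently is not.
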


\begin{proof}
Let $w\in H^{1}(\Omega)$ be a solution of \eqref{eig_pezzotto}. For every $\varphi\in H^{1}(\Omega)$, we have that
\begin{align*}
0=\displaystyle\int_{\Omega} \varphi w \;dx -\displaystyle\int_{\Omega} \varphi \Delta w\;dx =\displaystyle\int_{\Omega} \varphi w \;dx+\displaystyle\int_\Omega  \nabla \varphi \nabla w \;dx- \displaystyle\int_{\partial \Omega_0} \varphi \dfrac{\partial w}{\partial \nu}\;d\mathcal{H}^{n-1}=\\
=\displaystyle\int_{\Omega} \varphi w \;dx+\displaystyle\int_\Omega  \nabla \varphi \nabla w \;dx-\sigma_1(\Omega)\displaystyle\int_{\partial \Omega_0} \varphi w \;d\mathcal{H}^{n-1}.
\end{align*}
Therefore, we have  
\begin{equation*}
\sigma_1(\Omega)\displaystyle\int_{\partial \Omega_0} \varphi w \;d\mathcal{H}^{n-1}= \displaystyle\int_{\Omega} \varphi w \;dx+\displaystyle\int_\Omega  \nabla \varphi \nabla w \;dx.
\end{equation*}
If we choose $\varphi=w$, the divergence theorem and the fact that $w$ is a solution of \eqref{pezzotto_sistema} lead to
\begin{align*}
\sigma_1(\Omega)&= \dfrac{\displaystyle\int_{\Omega} w^2 \;dx+\displaystyle\int_\Omega  |\nabla w|^2 \;dx }{\displaystyle\int_{\partial \Omega_0} w^2 \;d\mathcal{H}^{n-1}}= \\ 
&=\dfrac{\displaystyle\int_{\Omega} w^2 \;dx -\displaystyle\int_{\Omega} w \Delta w \;dx+\displaystyle\int_{\partial \Omega_0} w\dfrac{\partial w}{\partial \nu}\;d\mathcal{H}^{n-1} }{\displaystyle\int_{\partial \Omega_0} w^2 \;d\mathcal{H}^{n-1}}=\dfrac{  \displaystyle\int_{\partial \Omega_0} w\dfrac{\partial w}{\partial \nu}\;d\mathcal{H}^{n-1} }{\displaystyle\int_{\partial \Omega_0} w^2 \;d\mathcal{H}^{n-1}}.
\end{align*}
In this way, we have obtained the variational characterization of $\sigma_1(\Omega)$:
\begin{equation}
\label{variational_steklov}
\sigma_1(\Omega)= \min_{\substack{\psi\in H^{1}(\Omega)\\\psi\not \equiv0}} \dfrac{  \displaystyle\int_{\partial \Omega_0} \psi\dfrac{\partial \psi}{\partial \nu}\;d\mathcal{H}^{n-1} }{\displaystyle\int_{\partial \Omega_0} \psi^2 \;d\mathcal{H}^{n-1}}.
\end{equation}
Finally, let $\mathcal{Z}$ be the solution of \eqref{eig_pezzotto} when $\Omega$ is the spherical shell; the claimed result follows by testing  the problem \eqref{variational_steklov} with $z$.
%Since we have that $\Delta z + z=0$, multiplying by $z$, integrating, using the divergence theorem and the boundary conditions in \eqref{pezzotto_sistema}, we obtain
%\begin{align}
%  0=-\int_{\Omega} \Delta  u u  \;dx+\int_{\Omega} u^2 \;dx= \int_{\Omega} |\nabla u|^2\:dx-\int_{\partial \Omega_0} z \dfrac{\partial z }{\partial \nu}\;\d\mathcal{H}^{n-1}+\int_{\Omega} z^2 \;dx
 %   \end{align}
\end{proof}

\subsection{The weighted quotient}
Now, our aim is  to write the ratio in the right-hand side of \eqref{quotient} in the case when $\Omega_0$ is a $R_2$-nearly spherical set. It is worth noticing that, since in \eqref{quotient} an interior boundary term is not  appearing, the same variational characterization holds when $\Omega$ is a $(R_1,R_2)-$nearly annular set. 

%For the rest of this Section, to improve the readability, we set  $$\nu=\frac{n}{2}-1$$. Hence, we write t
Keeping in mind the radial solution \eqref{exact_sol_annulus},
%\[\Psi(r)= r^{-\nu}[K_{\nu+1}(R_1) I_{\nu}(r)+ I_{\nu+1}(R_1) K_{\nu}(r)]\quad \forall r\in[R_1,R_2].\]
we define 
\begin{equation}
\label{eta_}
\eta(r)=\left[K_{\frac n2}(R_1) I_{\frac n2-1}(r)+ I_{\frac n2}(R_1) K_{\frac n2-1}(r)  \right] \quad \forall r\in[R_1,R_2],
\end{equation}
\begin{equation}
\label{mu_}
\mu(r)=\left[    K_{\frac n2}(R_1) I_{\frac n2}(r)- I_{\frac n2}(R_1) K_{\frac n2}(r)\right]\quad \forall r\in[R_1,R_2],
\end{equation}
where the second relation is obtained by deriving the first one with the use of the derivative rules \eqref{besselI_rule} and \eqref{besselK_rule}.

We observe that both the quantities $\eta(r)$ and $\mu(r)$ are strictly positive. Indeed,  $\eta>0$  is trivially verified and, as far as $\mu$ is concerned, for any $\nu>-1$, we have that
\begin{equation*}
\dfrac{I_{\nu+1}}{K_{\nu+1}}(R_1)<\dfrac{I_{\nu+1}}{K_{\nu+1}}(r)
\end{equation*}
for any $r>R_1$, since
\begin{equation*}
\frac{d}{dt}\left[  \dfrac{I_{\nu}}{K_{\nu}}(r) \right]=\dfrac{\left(I_{\nu-1}(r)+I_{\nu +1}(r)\right) K_{\nu}(r)+I_{\nu}(r)\left( K_{\nu-1}(r)+K_{\nu+1}(r)\right)}{2 K_\nu(x)^2}>0.
\end{equation*}
Furthermore, the monotonicity of $I_\nu$ and $K_\nu$ with respect to the index (see as a reference \cite{cochran1967monotonicity})  implies that $\eta(r)>\mu(r)$ .
Let us observe that, by \eqref{besselI_rule} and \eqref{besselK_rule}, the following derivation rules also hold:
\begin{equation}
\label{mu_eta_derivative_rules}
\left(r^{1-\frac n2} \eta(r)\right)'= r^{1-\frac n2}\mu(r);\quad \left(r^{-\frac n2} \mu(r)\right)'= r^{-\frac n2} \eta(r);
\end{equation}
\begin{equation}
\label{mu_eta_derivative_rules2}
\eta'(r)=\left(\frac n2-1\right)\frac { \eta(r)}r+\mu(r);\qquad\mu'(r)=-\frac n2 \frac{\mu(r)}r+\eta(r).
\end{equation}
Hence, the functions $\eta$ and $\mu$, defined in \eqref{eta_} and \eqref{mu_} respectively, lead to the definitions of the integrand functions in the quotient \eqref{quotient}, that are
\begin{equation}
\label{h}
    h(r)=r^{2-n} \eta^2(r)\quad \forall r\in[R_1,R_2],
\end{equation}
\begin{equation}
\label{f}
    f(r)=\frac{h'(r)}{2}=r^{2-n} \eta(r)\mu(r)\quad \forall r\in[R_1,R_2].
\end{equation}
%\begin{equation}\label{h}h(r)=\left[ r^{-\nu}K_{\nu+1}(R_1) I_{\nu}(r)+ r^{-\nu} I_{\nu+1}(R_1) K_{\nu}(r)  \right]^2.\end{equation}
%\begin{equation}\label{f}    f(r)=\dfrac{h'(r)}{2}=  r^{-2\nu}\left[ K_{\nu+1}(R_1) I_{\nu}(r)+  I_{\nu+1}(R_1) K_{\nu}(r)  \right] \left[K_{\nu+1}(R_1) I_{\nu+1}(r)- I_{\nu+1}(R_1) K_{\nu+1}(r)   \right ],\end{equation}

Finally, using the parametrization of $R_2-$nearly spherical sets (see Definition \eqref{NSset}), we get:
\begin{equation}
\label{numerator}    N(\Omega)=\displaystyle\int_{\mathbb{S}^{n-1}} f(R_2+u)\left(R_2+u\right)^{n-1}\;d\mathcal{H}^{n-1},
\end{equation}
\begin{equation}
\label{denominator}    D(\Omega)=\displaystyle\int_{\mathbb{S}^{n-1}} h(R_2+u)\left(R_2+u\right)^{n-1} \sqrt{1+\dfrac{|\nabla_\tau u|^2}{\left(R_2+u\right)^2}}\;d\mathcal{H}^{n-1}.
\end{equation}
Let us notice that, in the case $\Omega=A_{R_1,R_2}$, we have
\begin{equation*}
    \sigma_1(A_{R_1,R_2})= \dfrac{f(R_2)}{h(R_2)}.
\end{equation*}

The following Lemma is a consequence of the fact that 
 the functions $h$ and $f$ defined in \eqref{h} and \eqref{f} are analytic.

\begin{lemma}
\label{lemma_sviluppi_hf}
For any parametrization couple $u, v\in W^{1,\infty}(\mathbb S^{n-1})$ of a $(R_1,R_2)-$nearly annular set and for every $0<\varepsilon<R_1/2$ such that $||u||_{W^{1,\infty}},||v||_{W^{1,\infty}}\leq \varepsilon$, there exists $C=C(n,R_1,R_2)>0$ such that, on $\mathbb{S}^{n-1}$, we have:
\begin{equation}
\label{T1u}
\left| h(R_2+u)-h(R_2)-h'(R_2)u-h''(R_2)\frac{u^2}{2}\right|\leq K\varepsilon u^2,
\end{equation}
\begin{equation}
\label{T1v}
\left| h(R_1+v)-h(R_1)-h'(R_1)v-h''(R_1)\frac{v^2}{2}\right|\leq K\varepsilon v^2,
\end{equation}
\begin{equation}
\label{T2u}
\left| f(R_2+u)-f(R_2)-f'(R_2)u-f''(R_2)\frac{u^2}{2}\right|\leq K\varepsilon u^2.
\end{equation}
\end{lemma}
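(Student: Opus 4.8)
The plan is to prove all three estimates simultaneously by a single application of Taylor's formula with Lagrange remainder, using the fact that $h$ and $f$ are analytic on a compact subinterval of $(0,+\infty)$ on which everything in sight stays bounded. First I would record that $\eta$ and $\mu$, defined in \eqref{eta_}--\eqref{mu_}, are real-analytic on $(0,+\infty)$, being fixed linear combinations (with coefficients $I_{n/2}(R_1)$, $K_{n/2}(R_1)$ depending only on $n,R_1$) of the modified Bessel functions $I_{n/2-1}, I_{n/2}, K_{n/2-1}, K_{n/2}$, each analytic on $(0,+\infty)$. Since $r\mapsto r^{2-n}$ is analytic away from the origin, it follows that $h(r)=r^{2-n}\eta^2(r)$ and $f(r)=r^{2-n}\eta(r)\mu(r)$ are analytic on $(0,+\infty)$; in particular $h,f\in C^3\bigl([R_1/2,\,R_2+R_1/2]\bigr)$, and these functions depend only on $n,R_1,R_2$.

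Next I fix the compact interval $I:=[R_1/2,\,R_2+R_1/2]\subset(0,+\infty)$ and set
\[
M:=M(n,R_1,R_2):=\max\Bigl\{\sup_{r\in I}|h'''(r)|,\ \sup_{r\in I}|f'''(r)|\Bigr\}<+\infty,
\]
which is finite by the previous step and depends only on $n,R_1,R_2$. Since $0<\varepsilon<R_1/2$ and $\|u\|_{W^{1,\infty}},\|v\|_{W^{1,\infty}}\le\varepsilon$, for every $\xi\in\mathbb{S}^{n-1}$ the points $R_2+u(\xi)$ and $R_1+v(\xi)$ — together with the whole segments joining them to $R_2$, resp. $R_1$ — lie in $I$, because $R_1+v(\xi)\ge R_1-\varepsilon>R_1/2$ and $R_2+u(\xi)\le R_2+\varepsilon<R_2+R_1/2$. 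This is exactly the point where the hypothesis $\varepsilon<R_1/2$ is used: it keeps the relevant interval uniformly away from $0$, so that the factor $r^{2-n}$ and the singular behaviour of $K_\nu$ at the origin cause no trouble.

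Then, applying Taylor's formula with Lagrange remainder to $h$ at $R_2$, for each $\xi$ there is $\zeta=\zeta(\xi)$ between $R_2$ and $R_2+u(\xi)$ with
\[
h(R_2+u)=h(R_2)+h'(R_2)u+h''(R_2)\frac{u^2}{2}+h'''(\zeta)\frac{u^3}{6},
\]
whence, using $|u(\xi)|\le\varepsilon$,
\[
\Bigl|h(R_2+u)-h(R_2)-h'(R_2)u-h''(R_2)\frac{u^2}{2}\Bigr|\le\frac{M}{6}\,|u|^3\le\frac{M}{6}\,\varepsilon\,u^2,
\]
which is \eqref{T1u} with the constant $K:=M/6$ (this is the constant denoted $C$ in the statement). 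Repeating the same computation verbatim with $h$ expanded at $R_1$ against $v$, and with $f$ expanded at $R_2$ against $u$, yields \eqref{T1v} and \eqref{T2u} with the same constant $K$.

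There is no genuine obstacle: the argument is routine once analyticity is observed, and the only delicate point is the one-line verification that the interval on which the third derivatives are controlled stays away from the origin. If one wants a fully explicit bound on $M$, one can differentiate $h$ and $f$ three times using $\eta'=(\tfrac n2-1)\eta/r+\mu$ and $\mu'=-\tfrac n2\,\mu/r+\eta$ from \eqref{mu_eta_derivative_rules2}, which express $h'''$ and $f'''$ as explicit combinations of $\eta$, $\mu$ with coefficients that are rational functions of $r$ continuous on $I$, hence bounded there.
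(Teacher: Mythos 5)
Your proof is correct and is exactly the argument the paper leaves implicit: the paper's entire justification for Lemma \ref{lemma_sviluppi_hf} is the one-sentence remark that $h$ and $f$ are analytic, and your Taylor-with-Lagrange-remainder expansion on the compact interval $[R_1/2,\,R_2+R_1/2]$, together with the observation that $\varepsilon<R_1/2$ keeps everything uniformly away from the origin, is precisely how one turns that remark into a proof. The constant you obtain, $K=\tfrac16\max\bigl\{\sup_I|h'''|,\sup_I|f'''|\bigr\}$, clearly depends only on $n,R_1,R_2$, as required.
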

The following monotonicity result will be useful in proving the stability result for $\sigma_1(\Omega)$. The proof generalizes a result contained in \cite{amos1974computation}.
\begin{lemma}\label{lemma_amos}
For any $R_1>0$, we have that
\[
\left(\frac{\eta(r)}{\mu (r)}\right)'<0 \quad\forall\  r\geq R_1,
%\frac{d}{dr}\left[\frac{K_{\nu+1}(R_1)I_{\nu}(r)+I_{\nu+1}(R_1)K_\nu(r)}{K_{\nu+1}(R_1)I_{\nu+1}(r)-I_{\nu+1}(R_1)K_{\nu+1}(r)}\right]<0\quad\forall\  r\geq R_1.
\]
where $\eta$ and $\mu$ are defined in \eqref{eta_} and \eqref{mu_}.
\end{lemma}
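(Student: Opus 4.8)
The plan is to reduce the claim to the positivity of a single auxiliary function and then exploit a first–order differential identity. Since $\mu(r)>0$ for $r>R_1$ (this was established right before the statement), one can write $\left(\dfrac{\eta}{\mu}\right)'=\dfrac{\eta'\mu-\eta\mu'}{\mu^2}$, so it suffices to study the sign of the numerator. Plugging the derivation rules \eqref{mu_eta_derivative_rules2} into $\eta'\mu-\eta\mu'$ and collecting terms yields
\[
\eta'\mu-\eta\mu'=\mu^2-\eta^2+(n-1)\,\frac{\eta\mu}{r}=:-N(r),\qquad N(r):=\eta^2(r)-\mu^2(r)-(n-1)\,\frac{\eta(r)\mu(r)}{r},
\]
so that $\left(\eta/\mu\right)'=-N/\mu^2$ and the claim is equivalent to $N>0$ on $(R_1,+\infty)$ (the quotient $\eta/\mu$ being defined precisely there, since $\mu(R_1)=0$ by \eqref{mu_}).

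To prove $N>0$ I would differentiate $N$ and substitute \eqref{mu_eta_derivative_rules2} once more. The point is that, after collecting the $\eta^2$, $\mu^2$ and $\eta\mu$ contributions, the coefficients of $\eta^2$ and of $\mu^2$ telescope to $\mp 1/r$, and one is left with the clean identity
\[
\bigl(r\,N(r)\bigr)'=\frac{(n-1)\,\eta(r)\mu(r)}{r}.
\]
Since $\eta>0$ on $[R_1,+\infty)$ and $\mu>0$ on $(R_1,+\infty)$, the right–hand side is nonnegative, and strictly positive for $r>R_1$; hence $r\,N(r)$ is nondecreasing, and strictly increasing on $(R_1,+\infty)$.

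It then only remains to evaluate the left endpoint. From \eqref{mu_} one has $\mu(R_1)=0$ directly, while the Wronskian-type identity $I_{\nu}(x)K_{\nu+1}(x)+I_{\nu+1}(x)K_{\nu}(x)=1/x$ (a consequence of the recurrence and derivative formulas \eqref{Besselx1}, \eqref{Besselx2}, \eqref{besselI_rule}, \eqref{besselK_rule}), taken with $\nu=\tfrac n2-1$ and $x=R_1$, gives $\eta(R_1)=1/R_1$. Therefore $\left.r\,N(r)\right|_{r=R_1}=R_1\,\eta^2(R_1)=1/R_1>0$, and by the monotonicity just proved $r\,N(r)\ge 1/R_1>0$ for all $r\ge R_1$, i.e. $N(r)>0$. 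Consequently $\left(\eta/\mu\right)'=-N/\mu^2<0$ for all $r>R_1$, which is the assertion.

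The only nonroutine point — and the one I expect to require care — is the bookkeeping in the differentiation of $N$: one must check that the first–order terms generated by $\eta'$ and $\mu'$ recombine so that $(rN)'$ is a pure multiple of $\eta\mu/r$, with no surviving $\eta^2$ or $\mu^2$ terms. This cancellation is exactly what makes the monotonicity argument work, and it explains why the specific combination defining $N$ (rather than, say, $\eta^2-\mu^2$ alone) is the right object; choosing the wrong combination would leave a sign-indefinite remainder. A secondary, purely technical check is using the correct normalization of the cross-product Wronskian for the modified Bessel functions when computing $\eta(R_1)$.
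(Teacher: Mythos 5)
Your proof is correct and follows the same core idea as the paper's: show that $\eta\mu'-\mu\eta'$ (your $N$) is positive by observing that $\bigl(r(\eta\mu'-\mu\eta')\bigr)'=(n-1)\,\eta\mu/r$ and then using the sign of $\eta\mu$ on $(R_1,\infty)$. Where you differ — and it is a real improvement — is at the left endpoint. The paper integrates the identity from $0$ to $r$, drops the boundary term at $0$, and asserts $\int_0^r \eta\mu/s\,ds>0$; but the $K$-Bessel components of $\eta$ and $\mu$ blow up at the origin, so that boundary term and the integral both diverge, and moreover $\mu<0$ on $(0,R_1)$ (as follows from the monotonicity of $I_\nu/K_\nu$ established just before the lemma), so the integrand is not even sign-definite on $(0,R_1)$. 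You instead start from $r=R_1$, use $\mu(R_1)=0$ and the cross-product Wronskian $I_\nu K_{\nu+1}+I_{\nu+1}K_\nu=1/x$ to get $\eta(R_1)=1/R_1$, hence $rN(r)\big|_{r=R_1}=1/R_1>0$, and then the positivity of $(rN)'$ on $(R_1,\infty)$ closes the argument cleanly. In short: same differential identity and same monotonicity mechanism, but your careful bookkeeping at $r=R_1$ is the correct way to execute the paper's own strategy, and it avoids the ill-posed $\int_0^r$ step.
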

\begin{proof}
Since $\eta(r)$ is a linear combination of the modified Bessel functions $I_\nu$ and $K_\nu$, it satisfies the following equation:
\[
\eta''+\frac 1 r \eta' -\frac{r^2+(\frac n2-1)^2}{r^2}\eta=0,
\]
that is 
\[
r^2\eta'\eta''+r\eta'^2-\left(r^2+\left(\frac n2-1\right)^2\right)\eta\eta'=0,
\]
i.e.
\[
r\eta'(r\eta')'-\left(r^2+\left(\frac n2-1\right)^2\right)\eta\eta'=0.
\]
Therefore, dividing by $r\eta'$, we obtain:
\begin{equation}
    \label{rmu'}
(r\eta')'=\frac{r^2+(\frac n2-1)^2}r \eta.
\end{equation}
In the same way, we also gain:
\begin{equation}
    \label{reta'}
(r\mu')'=\frac{r^2+(\frac n2)^2}r \mu .
\end{equation}
%The integration by parts and the division by $r^2\eta^2$, leads to\[\frac{\displaystyle\int_0^r r\eta'(r\eta')'dr}{r^2\eta^2}=\frac{\displaystyle\int_0^r(r^2+\eta^2)\eta\eta'dr}{r^2\eta^2}\]and hence\[\frac{\eta'^2}{\eta^2}=1+\frac{\nu^2}{r^2}-\frac{2}{r^2\eta^2}\displaystyle\int_0^rr\eta^2dr.\]
%By the definition of $Y_\nu$, this relation can be written as\[Y_\nu^2=1+\frac{\nu^2}{r^2}-\frac{2}{r^2\eta^2} \int_0^r r\eta^2 dx.\]
%Hence, we have\begin{equation}\label{Yleq}Y_\nu\leq\sqrt{ 1+\frac{\nu^2}{r^2}}.\end{equation}
%By using \eqref{YT} and \eqref{Yleq}, we reach the following inequality\[T_\nu+\frac{\nu}{r}\leq\sqrt{ 1+\frac{\nu^2}{r^2}}.\]
By using \eqref{rmu'} and \eqref{reta'}, we are able to write the following equality
\[
\int_0^r [\eta(s\mu')'-\mu(s\eta')']ds=(n-1)\int_0^r\frac{\eta(s)\mu(s)}{s}ds>0.
\]
By applying the Green Theorem on the l.h.s, we have
\[
s[\eta\mu'-\mu\eta']_0^r=(n-1)\int_0^r\frac{\eta(s)\mu(s)}{s}ds>0,
\]
that leads to
\begin{equation}
    \label{diffY}
\frac{\mu'(r)}{\mu(r)}-\frac{\eta'(r)}{\eta(r)}=\frac{n-1}{r\eta(r)\mu(r)}\int_0^r\frac{\eta(s)\mu(s)}{s}ds>0\quad\forall r>0.
\end{equation}
By using the relation \eqref{diffY},  the thesis follows by observing that
\[
\left(\dfrac{\eta(r)}{\mu (r)}\right)'= \frac{\eta'(r)\mu(r)-\eta(r)\mu'(r)}{\mu^2(r)}=\frac{\eta'(r)\mu(r)-\eta(r)\mu'(r)}{\mu(r)\eta(r)}\frac{\eta(r)}{\mu(r)}=\left(\frac{\eta'(r)}{\eta(r)}-\frac{\mu'(r)}{\mu(r)}\right)\frac{\eta(r)}{\mu(r)}<0. 
\]
\end{proof}

We end this section with a very technical Lemma establishing the sign of a certain quantity, that will be useful to obtain the stability result. The proof is based on the second part of \cite[Prop. 2.5]{ferone2015conjectured}. For sake of simplicity, we set:
\begin{equation}
\label{A0A1A2}
\begin{split}
A_0=& R_2^{n-1}f(R_2)h(R_2),\\
A_1=&R_2^{n-1}[f(R_2)h'(R_2)-f'(R_2)h(R_2)],\\
A_2=&R_2^{n-1}[ f(R_2)h''(R_2)-f''(R_2)h(R_2)].
\end{split}
\end{equation}
\begin{lemma}\label{A0A1A2_lemma}
For any $R_2>0$, we have that
\begin{equation}
\label{segno_pezzo}
(n-1)R_2A_1+R_2^2A_2+2nA_0>0.
\end{equation}
\end{lemma}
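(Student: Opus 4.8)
The plan is to reduce the inequality \eqref{segno_pezzo} to an identity for Bessel functions by expressing $A_0$, $A_1$, $A_2$ entirely in terms of $\eta$, $\mu$ and their derivatives evaluated at $R_2$, and then exploiting the relations \eqref{mu_eta_derivative_rules2}, \eqref{rmu'}, \eqref{reta'}. Recall that $h=r^{2-n}\eta^2$ and $f=r^{2-n}\eta\mu$. First I would compute $h'$, $f'$, $h''$, $f''$ at $r=R_2$ using the product rule together with $\eta'=(\frac n2-1)\eta/r+\mu$ and $\mu'=-\frac n2\,\mu/r+\eta$; this gives $h'=r^{1-n}\eta(2\mu-(n-2)\eta r^{-1}\cdot r)$ type expressions, and similarly for the second derivatives, all polynomial in $\eta$, $\mu$, $r^{-1}$. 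Plugging these into the combinations $f h'-f'h$ and $fh''-f''h$ produces a cancellation of the leading $\eta^2\mu^2$ terms, leaving $A_1$ and $A_2$ as explicit multiples of $R_2^{?}$ times polynomials in $\eta(R_2)$, $\mu(R_2)$.

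The key point, following the structure of \cite[Prop. 2.5]{ferone2015conjectured}, is that after this substitution the left-hand side of \eqref{segno_pezzo} collapses to a single monomial with a manifestly positive coefficient — something of the form $c(n)\,R_2^{k}\,\eta(R_2)^2$ or $c(n)\,R_2^k\,\eta(R_2)\mu(R_2)$ with $c(n)>0$, using that both $\eta$ and $\mu$ are strictly positive on $[R_1,R_2]$ (established earlier in the excerpt). Concretely I expect the cross terms involving $\eta\mu$ to cancel against the $2nA_0=2nR_2^{n-1}f(R_2)h(R_2)=2nR_2^{n-1}\cdot r^{2-n}\eta\cdot r^{2-n}\eta^2$ contribution and against the $(n-1)R_2 A_1$ and $R_2^2A_2$ contributions, the algebra being driven precisely by the identity $h'=2f$ from \eqref{f} and its differentiated version. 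Where a genuine Bessel input (rather than pure algebra) is needed, I would invoke \eqref{rmu'}–\eqref{reta'}, i.e. $(r\eta')'=\frac{r^2+(n/2-1)^2}{r}\eta$ and $(r\mu')'=\frac{r^2+(n/2)^2}{r}\mu$, to handle the $\eta''$, $\mu''$ terms hidden inside $h''$, $f''$.

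The main obstacle will be bookkeeping: $h''$ and $f''$ expand into many terms (each $\eta$-derivative spawns a $\mu$ and each $\mu$-derivative spawns an $\eta$, with $1/r$ factors accumulating), so the risk is an arithmetic slip in collecting coefficients of $\eta^2$, $\eta\mu$, $\mu^2$ and of the various powers of $R_2^{-1}$. I would organize the computation by writing everything as $R_2^{3-n}$ (or the appropriate common power) times a quadratic form $a\,\eta(R_2)^2+b\,\eta(R_2)\mu(R_2)+c\,\mu(R_2)^2$ and tracking only $(a,b,c)$ through each step; the claim \eqref{segno_pezzo} then amounts to showing this particular linear combination has $(a,b,c)$ with $a>0$ and $b=c=0$, or more robustly that the resulting form is positive using $\eta,\mu>0$. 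A secondary subtlety is making sure the $n=2$ case (where $n/2-1=0$) is not degenerate — but \eqref{rmu'} with $n=2$ just reads $(r\eta')'=r\eta$, which is fine, so no separate argument is needed. Once the quadratic form is identified and its positivity is transparent, the proof concludes immediately.
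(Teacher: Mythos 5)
Your reduction strategy is on the right track: you correctly plan to expand $A_0,A_1,A_2$ in terms of $\eta(R_2),\mu(R_2)$ via the first-order derivation rules and assemble a quadratic form. But your prediction of what that form looks like is wrong, and this hides the actual difficulty of the lemma. After the algebra, the left-hand side of \eqref{segno_pezzo} becomes
\[
R_2^{3-n}\,\eta(R_2)\mu(R_2)\Bigl[\,2R_2^2\,\mu(R_2)^2+2(n-1)R_2\,\eta(R_2)\mu(R_2)+(n+1-2R_2^2)\,\eta(R_2)^2\,\Bigr],
\]
so the quadratic form is $\zeta(r)=2r^2\mu^2+2(n-1)r\eta\mu+(n+1-2r^2)\eta^2$, \emph{not} a single monomial and \emph{not} a form with nonnegative coefficients: the coefficient $(n+1-2r^2)$ of $\eta^2$ is negative as soon as $r^2>(n+1)/2$. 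Thus your fallback — "positivity using $\eta,\mu>0$" — fails precisely where the lemma has content, and your primary expectation ($b=c=0$, a positive monomial) is simply not what the cancellations produce.

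The missing idea is a differential-inequality argument for $\zeta$. Using only \eqref{mu_eta_derivative_rules2} (the second-order ODEs \eqref{rmu'}–\eqref{reta'} are not actually needed here, contrary to what you anticipate), one computes $r\zeta'(r)=\zeta(r)+4r\eta(r)\mu(r)+(n^2-2n-3)\eta(r)^2$. For $n\ge 3$ the extra terms are nonnegative, giving $r\zeta'\ge\zeta$, i.e. $\zeta/r$ is nondecreasing, from which $\zeta>0$ follows. For $n=2$ the coefficient $n^2-2n-3=-3$ is negative and a separate argument is required: one shows $\xi(r):=r\zeta'(r)$ satisfies $\xi'(r)=6r\mu^2+2r\eta^2>0$ with $\xi(0)=\xi'(0)=0$, hence $\xi>0$, hence $\zeta'>0$, hence $\zeta>0$ since $\zeta(0)=0$. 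Your plan contains none of this — you stop at the reduction step and assert positivity will be "transparent," which it is not. This is a genuine gap: the bookkeeping you flag as the main risk is in fact routine, while the step you treat as automatic is where the real work (and the $n=2$ subtlety you explicitly dismiss) lies.
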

\begin{proof}
If we put in evidence $R_2^2$ in \eqref{segno_pezzo}, we obtain $R_2^2[
\frac{(n-1)}{R_2}A_1+A_2+\frac{2n}{R_2^2}A_0]>0$ and, so, we have to prove that 
\[
\left[
\frac{(n-1)}{R_2}A_1+A_2+\frac{2n}{R_2^2}A_0\right]=[r^{n-1}(f(r)h'(r)-h(r)f'(r)]'+2nr^{n-3}h(r)f(r)>0
\]
for $r=R_2$. By using the definitions of $h$ and $f$ in \eqref{h}-\eqref{f} and the derivation rules \eqref{mu_eta_derivative_rules}, the last relation becomes
\[
[r^{2-n}(r\eta^2(r)\mu^2(r)-r\eta^4(r)+(n-1)\eta^3(r)\mu(r))]'+2nr^{1-n}\eta^3(r)\mu(r)>0
\]
and hence
\[
r^{1-n}\eta(r)\mu(r)[2r^2\mu^2(r)+2(n-1)r\eta(r)\mu(r)+(n+1-2r^2)\eta^2(r)]>0.
\]
If we set
$$ \zeta(r):= 2r^2\mu^2(r)+2(n-1)r\eta(r)\mu(r)+(n+1-2r^2)\eta^2(r), $$
we need to show that $\zeta(r)>0$.
We observe that 
\[\zeta'(r)=\frac 1 r \bigl(\zeta(r)+4r\eta(r)\mu(r)+(n^2-2n-3)\eta^2(r)\bigr).
\]
If $n\geq 3$, since $n^2-2n-3\ge 0$ and $\eta(r),\mu(r)>0$ for any $r>0$, we deduce that $r\zeta'(r)\geq \zeta(r)$ and hence that $\zeta(r)>0$ for any $r>0$. Otherwise, if $n=2$, we observe that the function $\xi (r)=r\zeta'(r)$ is increasing for $r>0$ because, by using \eqref{mu_eta_derivative_rules2}, we have that
 \[
\xi'(r)=(r\zeta'(r))'=6r\mu^2(r)+2r\eta^2(r)>0\quad\forall r>0.
\]
Moreover, it is immediate that   $\xi'(0)=\xi(0)=0$. This means that $\xi(r)>0$ and hence $\zeta'(r)>0$ for any $r>0$. Finally, since $\zeta(0)=0$, this implies that $\zeta(r)>0$ for any $r>0$.
\end{proof}

\subsection{The stability result}
\label{stab_stek}
A crucial result to obtain the stability  with respect to the outer boundary is the following stability issue for the functional $\Omega\mapsto\frac{N(\Omega)}{D(\Omega)}$. Up to some necessary technical modification, the proof follows the scheme in \cite{cito2021quantitative,ferone2015conjectured}.

From now on, we say that $\Omega$ is an admissible set if $\Omega=\Omega_0\setminus\overline{\Theta}$ with $\Omega_0$ convex, bounded and open and $\Theta\subset\subset\Omega_0$ a finite union of open sets homeomorphic to balls.

\begin{proposition}
\label{nss} Let $0<R_1<R_2<+\infty$. 
There exists $C_1=C_1(n,R_1,R_2)>0$ such that, for any admissible set $\Omega=\Omega_0\setminus\overline{\Theta}$ with $\Omega_0$ a $R_2$-nearly spherical set and the outer boundary $\partial\Omega_0$  parametrized by $u\in W^{1,\infty}(\mathbb S^{n-1})$, and for every $0<\varepsilon<R_2/2$ such that $||u||_{W^{1,\infty}}\leq \varepsilon$, the following quantitative inequality holds true:
\begin{equation*}%\label{eq:nss}
\frac{N(A_{R_1,R_2})D(\Omega)-D(A_{R_1,R_2})N(\Omega)}{n\omega_n}\ge C_1(n,R_1, R_2)||\nabla_\tau u||_{L^2}^2.
\end{equation*}
\end{proposition}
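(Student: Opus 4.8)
The plan is to adapt the Fuglede-type computation of \cite{ferone2015conjectured, cito2021quantitative} to the present two--Bessel setting. Since neither $N(\Omega)$ nor $D(\Omega)$ involves an inner boundary term, I may fix the hole and work only with the outer boundary $\partial\Omega_0$, parametrized by $u\in W^{1,\infty}(\mathbb S^{n-1})$ with $\|u\|_{W^{1,\infty}}\le\varepsilon$; as in \cite{ferone2015conjectured, cito2021quantitative}, after a translation of $\Omega_0$ (and a corresponding recentering of the radial profile, which leaves $N,D$ unchanged) we may assume that the improved Poincaré inequality \eqref{poincare} is at our disposal. Using that $N(A_{R_1,R_2})=n\omega_nR_2^{n-1}f(R_2)$ and $D(A_{R_1,R_2})=n\omega_nR_2^{n-1}h(R_2)$, the quantity to be bounded from below is
\[
\frac{N(A_{R_1,R_2})D(\Omega)-D(A_{R_1,R_2})N(\Omega)}{n\omega_n}=R_2^{n-1}\bigl(f(R_2)D(\Omega)-h(R_2)N(\Omega)\bigr).
\]

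First I would insert into \eqref{numerator}--\eqref{denominator} the second--order Taylor expansions of $h(R_2+u)$ and $f(R_2+u)$ provided by \eqref{T1u}, \eqref{T2u} of Lemma \ref{lemma_sviluppi_hf}, together with the expansions of $(R_2+u)^{n-1}$ and of the surface element furnished by \eqref{E1u}, \eqref{E2u} of Lemma \ref{lemma2}. The zeroth--order terms cancel identically because $f(R_2)h(R_2)-h(R_2)f(R_2)=0$; the first--order term is a fixed multiple of $\int_{\mathbb S^{n-1}}u\,d\mathcal H^{n-1}$, which I eliminate through the outer--perimeter identity \eqref{moreover_1}; and all remainders are controlled by $C\varepsilon\bigl(\|u\|_{L^2}^2+\|\nabla_\tau u\|_{L^2}^2\bigr)$. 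This produces an identity of the form
\[
\frac{N(A_{R_1,R_2})D(\Omega)-D(A_{R_1,R_2})N(\Omega)}{n\omega_n}=\mathcal C_1\,\|\nabla_\tau u\|_{L^2}^2+\mathcal C_2\,\|u\|_{L^2}^2+\mathcal R,\qquad |\mathcal R|\le C\varepsilon\bigl(\|u\|_{L^2}^2+\|\nabla_\tau u\|_{L^2}^2\bigr),
\]
where $\mathcal C_1,\mathcal C_2$ are explicit linear combinations of $A_0,A_1,A_2$ and hence, through \eqref{mu_eta_derivative_rules}--\eqref{mu_eta_derivative_rules2}, of $\eta(R_2)$ and $\mu(R_2)$ only.

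The decisive point is the sign of this quadratic form. Using $h'=2f$ together with the derivation rules \eqref{mu_eta_derivative_rules2}, the coefficient $\mathcal C_1$ simplifies to a positive multiple of $\eta^2(R_2)\bigl(\eta^2(R_2)-\mu^2(R_2)\bigr)$, which is strictly positive thanks to $\eta>\mu$ on $[R_1,R_2]$ (a fact that may also be read off from the monotonicity Lemma \ref{lemma_amos}, since $\mu(R_1)=0$ and $\eta/\mu\to1$ at infinity); thus $\mathcal C_1\ge c_1(n,R_1,R_2)>0$. If $\mathcal C_2\ge0$ the conclusion is immediate once $\varepsilon$ is small and $\mathcal R$ is absorbed with the help of \eqref{poincare}. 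If $\mathcal C_2<0$, I would invoke the improved Poincaré inequality \eqref{poincare} to replace $\|u\|_{L^2}^2$ by a multiple of $\|\nabla_\tau u\|_{L^2}^2$ up to an $O(\varepsilon)$ error, which reduces the matter to the strict positivity of a fixed linear combination of $A_0,A_1,A_2$; this is exactly the content of the technical Lemma \ref{A0A1A2_lemma} (whose proof in turn rests on the Bessel ODEs \eqref{rmu'}, \eqref{reta'} and on the positivity of $\zeta$). Choosing $\varepsilon=\varepsilon(n,R_1,R_2)$ small enough to absorb $\mathcal R$ then gives the asserted inequality with a suitable $C_1(n,R_1,R_2)>0$.

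The main obstacle is precisely this last sign analysis. In the one--Bessel situations of \cite{ferone2015conjectured, cito2021quantitative} the relevant coefficients are transparent, whereas here $f$ and $h$ are built from the combination $\eta,\mu$ of two modified Bessel functions: the recurrence and derivation formulas must be applied repeatedly and checked for consistency, and --- most delicately --- the leading behaviour of $\mathcal C_1\|\nabla_\tau u\|_{L^2}^2+\mathcal C_2\|u\|_{L^2}^2$ as $R_2\to\infty$ (where $\eta\sim\mu$) degenerates, so positivity cannot be read off by inspection and genuinely requires the monotonicity and ODE arguments encoded in Lemmas \ref{lemma_amos} and \ref{A0A1A2_lemma}. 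The remaining point, the bookkeeping of the $O\bigl(\varepsilon(\|u\|_{L^2}^2+\|\nabla_\tau u\|_{L^2}^2)\bigr)$ remainders, is routine.
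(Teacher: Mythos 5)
Your proposal follows essentially the same route as the paper's proof: second-order Taylor expansion of $h,f$ and the geometric factors via Lemmata~\ref{lemma2} and~\ref{lemma_sviluppi_hf}, cancellation of the linear term through the perimeter constraint~\eqref{moreover_1}, and a case split on the sign of the $\|u\|_{L^2}^2$-coefficient with the improved Poincar\'e inequality~\eqref{poincare} used to absorb it when negative. Two points are worth flagging. First, your claim that the gradient coefficient reduces to a positive multiple of $\eta^2(R_2)\bigl(\eta^2(R_2)-\mu^2(R_2)\bigr)$ is in fact correct (one can verify $\tfrac{A_0}{2}-\tfrac{R_2A_1}{2(n-1)}=\tfrac{R_2^{4-n}}{2(n-1)}\,\eta^2(R_2)\bigl(\eta^2(R_2)-\mu^2(R_2)\bigr)$), but the paper sidesteps this algebra by simply observing $A_0>0$ and $A_1<0$, the latter read off from Lemma~\ref{lemma_amos} through $A_1=f^2(R_2)\bigl[\eta/\mu\bigr]'_{r=R_2}<0$; both arguments work, yours being a bit more computational. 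Second, and more substantively, after feeding Poincar\'e into~\eqref{eq:lungo1} the coefficient that must be shown positive is $\tfrac{n^2-3n}{n-1}R_2A_1+R_2^2A_2+2nA_0$, which is \emph{not} the combination $(n-1)R_2A_1+R_2^2A_2+2nA_0$ treated in Lemma~\ref{A0A1A2_lemma}: your statement that the reduction ``is exactly the content of Lemma~\ref{A0A1A2_lemma}'' is therefore off. The paper closes this gap by writing
\[
\frac{n^2-3n}{n-1}R_2A_1+R_2^2A_2+2nA_0=\bigl[(n-1)R_2A_1+R_2^2A_2+2nA_0\bigr]-\frac{n+1}{n-1}R_2A_1,
\]
and invoking \emph{both} Lemma~\ref{A0A1A2_lemma} for the bracket and $A_1<0$ (Lemma~\ref{lemma_amos}) for the remaining summand. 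Your sketch does hint at this in the final paragraph, but as written the reduction step would not go through without that extra decomposition.
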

\begin{proof}
We start by performing a suitable \lq\lq Taylor expansion\rq\rq\ in terms of the deformations of $\partial\Omega_0$ compared to $\partial B_{R_2}$. By using the characterization of $N(\Omega)$ and $D(\Omega)$ in \eqref{numerator} and \eqref{denominator}, we obtain
\begin{equation*}
    \begin{split}      
\frac{N(A_{R_1,R_2})D(\Omega)-D(A_{R_1,R_2})N(\Omega)}{n\omega_n}&=f(R_2)\int_{\mathbb S^{n-1}}h(R_2+u)(R_2+u)^{n-1}\sqrt{1+\frac{|\nabla_\tau u|^2}{(R_2+u)^2}}\;d\mathcal{H}^{n-1}\\
&\quad -h(R_2)\int_{\S^{n-1}}f(R_2+u)(R_2+u)^{n-1}\;d\mathcal{H}^{n-1}.
    \end{split}
\end{equation*}
%By a Taylor expansion up to the second order around $t=0$ of the two integrals
Then, by using the Lemmata \ref{lemma2} and \ref{lemma_sviluppi_hf}, we get
\begin{equation}\label{eq:lungo}
\begin{split}
\frac{N(A_{R_1,R_2})D(\Omega)-D(A_{R_1,R_2})N(\Omega)}{n\omega_n}&\ge \int_{\S^{n-1}}R_2^{2n-2} [f(R_2)h'(R_2)-f'(R_2)h(R_2)]u\;d\mathcal{H}^{n-1}\\
&\quad+\int_{\S^{n-1}}R_2^{2n-3} [2(n-1)(f(R_2)h'(R_2)-f'(R_2)h(R_2))]\frac{u^2}{2}\;d\mathcal{H}^{n-1}\\
&\quad+\int_{\S^{n-1}}R_2^{2n-2} [f(R_2)h''(R_2)-f''(R_2)h(R_2)]\frac{u^2}{2}\;d\mathcal{H}^{n-1}\\
&\quad+\int_{\S^{n-1}}R_2^{2n-4}f(R_2)h(R_2)\frac{|\nabla_\tau u|^2}{2}\;d\mathcal{H}^{n-1}-\varepsilon C_1\|\nabla_\tau u\|^2_{L^2}.\\
%&\ge t^2 \left[\frac{n}{2}(f(R_2)h'(R_2)-f'(R_2)h(R_2))+\frac{f(R_2)h''(R_2)-f''(R_2)h(R_2)}{2}\right]\|u\|^2_{L^2(\S^{n-1})}\\
%&+t^2\left[\frac{f(R_2)h(R_2)}{2}-\frac{f(R_2)h'(R_2)-f'(R_2)h(R_2)}{2(n-1)}\right]\|D_\tau u\|^2_{L^2(\S^{n-1})}
%-\varepsilon C_2t^2\|D_\tau u\|^2_{L^2(\S^{n-1})}
\end{split}
\end{equation}
Now, using the notation in  \eqref{A0A1A2}, inequality \eqref{eq:lungo} becomes
\begin{equation}
    \label{int_trasf}
\begin{split}
\frac{N(A_{R_1,R_2})D(\Omega)-D(A_{R_1,R_2})N(\Omega)}{n\omega_n}&\ge R_2^{n-1} A_1\int_{\S^{n-1}} u\;d\mathcal{H}^{n-1}+R_2^{n-2} [2(n-1)A_1+R_2 A_2]\int_{\S^{n-1}}\frac{u^2}{2}\;d\mathcal{H}^{n-1}\\
&\quad+R_2^{n-3}A_0\int_{\S^{n-1}}\frac{|\nabla_\tau u|^2}{2}\;d\mathcal{H}^{n-1}-\varepsilon C_1\|\nabla_\tau u\|^2_{L^2}.
\end{split}
\end{equation}
Moreover, by the perimeter constraint given by \eqref{moreover_1}, then  \eqref{int_trasf} becomes 

%\begin{equation}\label{fixperi}t\int_{\S^{n-1}} u d\H^{n-1}+t^2\frac{n-2}{2}\int_{\S^{n-1}}u^2 d\H^{n-1} \geq - t^2\frac{1}{2(n-1)}\int_{\S^{n-1}}|D_\tau u|^2 d\H^{n-1}- C_3(n) o(t^2)\end{equation}
\begin{equation}\label{eq:lungo1}
\begin{split}
\frac{N(A_{R_1,R_2})D(\Omega)-D(A_{R_1,R_2})N(\Omega)}{n\omega_n}&\ge R_2^{n-2}\frac{n A_1+R_2A_2}2\int_{\S^{n-1}} u^2\;d\mathcal{H}^{n-1}\\
&+R_2^{n-3}\left(\frac{A_0}2-\frac{R_2A_1}{2(n-1)}\right)\int_{\S^{n-1}}|\nabla_\tau u|^2\;d\mathcal{H}^{n-1}-\varepsilon C_1\|\nabla_\tau u\|^2_{L^2}.
\end{split}
\end{equation}
%\begin{equation}\label{eq:lungo1}\begin{split}&\frac{N(\Omega^\#)D(\Omega)-D(\Omega^\#)N(\Omega)}{n\omega_n}\\&\ge t^2 \left[\frac{n}{2}(f(R_2)h'(R_2)-f'(R_2)h(R_2))+\frac{f(R_2)h''(R_2)-f''(R_2)h(R_2)}{2}\right]\|u\|^2_{L^2(\S^{n-1})}\\&+t^2\left[\frac{f(R_2)h(R_2)}{2}-\frac{f(R_2)h'(R_2)-f'(R_2)h(R_2)}{2(n-1)}\right]\|D_\tau u\|^2_{L^2(\S^{n-1})}-\varepsilon C_2t^2\|D_\tau u\|^2_{L^2(\S^{n-1})}\end{split}\end{equation}
Firstly, let us note that the term in round parenthesis is positive because it is the sum of two positive terms. Indeed, $A_0$, defined in \eqref{A0A1A2}, is positive,
since $h>0$ and $f>0$ (defined respectevely in \eqref{h} and \eqref{f});
%by the positivity of 
%\[\frac{A_0}2-\frac{R_2A_1}{2(n-1)}=\frac{R_2(\eta^2(R_2)-\mu^2(R_2))\eta^2(R_2)}{2(n-1)}+\frac{\eta^3(R_2)\mu(R_2)}{2}>0.\]
%$h%$ \eqref{h} and of $f$ \eqref{f};
meanwhile 
\begin{equation}
    \label{A_1<0}
A_1=R_2^{n-1}[f(R_2)h'(R_2)-f'(R_2)h(R_2)]=f^2(R_2)\left[\frac{h(r)}{f(r)}\right]'_{r=R_2}=f^2(R_2)\left[\frac{\eta(r)}{\mu(r)}\right]'_{r=R_2}<0,
\end{equation}
where the last inequality follows from Lemma \ref{lemma_amos}.

%\textcolor{red}{
%\begin{equation*}
%\left[\frac{f(R_2)h(R_2)}{2}-\frac{f(R_2)h'(R_2)-f'(R_2)h(R_2)}{2(n-1)}\right]=\frac{R_2(\I^2_{\frac{n}{2}-1}(R_2)-\I^2_{\frac{n}{2}}(R_2))}{2(n-1)\I^2_{\frac{n}{2}}(R_2)}>0.
%\end{equation*}
%Sapendo che A1 negativo la conclusione viene subito.}

Therefore, since it is possible to take  $\varepsilon C_1$ arbitrarily small, where the quantity $C_1$ does not depend on $\varepsilon$, the proof is concluded if we verify that
\begin{equation}
\label{eq:alpha}
\frac{n A_1+R_2A_2}2%\left[\frac{n}{2}(f(R_2)h'(R_2)-f'(R_2)h(R_2))+\frac{f(R_2)h''(R_2)-f''(R_2)h(R_2)}{2}\right]
\ge 0.
\end{equation}
 %(the positive constant in the statement is provided by \eqref{eq:alpha}).

Otherwise, if \eqref{eq:alpha} does not hold (i.e. if $\frac{n A_1+R_2A_2}2<0$), we go on with the estimate from below.  More precisely, by using  the Poincaré inequality \eqref{poincare}, from \eqref{eq:lungo1}, we get
\begin{equation*}%\label{eq:lungo2}
\begin{split}
\frac{N(A_{R_1,R_2})D(\Omega)-D(A_{R_1,R_2})N(\Omega)}{n\omega_n}&\ge \frac{R_2^{n-3}}{4n}\left(\frac{n^2-3n}{n-1}R_2A_1+R_2^2A_2+2nA_0\right)\int_{\S^{n-1}} |\nabla_\tau u|^2\;d\mathcal{H}^{n-1}\\
&\quad-\varepsilon C_1\|\nabla_\tau u\|^2_{L^2}.
\end{split}
\end{equation*}
%\begin{equation}\label{eq:lungo2}\begin{split}&\frac{N(\Omega^\#)D(\Omega)-D(\Omega^\#)N(\Omega)}{n\omega_n}\\&\ge \frac{t^2}{2n} \left[\frac{n}{2}(f(R_2)h'(R_2)-f'(R_2)h(R_2))+\frac{f(R_2)h''(R_2)-f''(R_2)h(R_2)}{2}\right]\|D_\tau u\|^2_{L^2(\S^{n-1})}\\&+t^2\left[\frac{f(R_2)h(R_2)}{2}-\frac{f(R_2)h'(R_2)-f'(R_2)h(R_2)}{2(n-1)}\right]\|D_\tau u\|^2_{L^2(\S^{n-1})}-\varepsilon C_2t^2\|D_\tau u\|^2_{L^2(\S^{n-1})}\\&\ge \frac{t^2}{2n} \Bigg[\frac{n^2-3n}{2(n-1)}(f(R_2)h'(R_2)-f'(R_2)h(R_2))+\frac{f(R_2)h''(R_2)-f''(R_2)h(R_2)}{2}\\&+n(f(R_2)h(R_2))\Bigg]\|D_\tau u\|^2_{L^2(\S^{n-1})}-\varepsilon C_2t^2\|D_\tau u\|^2_{L^2(\S^{n-1})}.\end{split}\end{equation}
Finally, in order to conclude the proof,  it remains to  show that 
$$ \left(\frac{n^2-3n}{n-1}R_2A_1+R_2^2A_2+2nA_0\right)>0.$$
This term can be written as the sum of two positive quantities in the following way:
\begin{equation*}
\frac{n^2-3n}{n-1}R_2A_1+R_2^2A_2+2nA_0=\left[(n-1)R_2A_1+R_2^2A_2+2nA_0\right]-\frac{n+1}{n-1}R_2A_1.
\end{equation*}
Indeed, the positive sign of the first addendum of the right hand side of the last equality  is proved in Lemma \ref{A0A1A2_lemma} and the sign of the second one  follows from \eqref{A_1<0}.
%\[C(n,R_2)=(n-1)(f(R_2)h'(R_2)-f'(R_2)h(R_2))+f(R_2)h''(R_2)-f''(R_2)h(R_2)+2n(f(R_2)h(R_2))>0.\]
%\begin{align*}\frac{n^2-3n}{n-1} &(f(R_2)h'(R_2)-f'(R_2)h(R_2))+f(R_2)h''(R_2)-f''(R_2)h(R_2)+2n(f(R_2)h(R_2))\\&=C(n,R_2)-\frac{n+1}{n-1}(f(R_2)h'(R_2)-f'(R_2)h(R_2)),\end{align*}
%$$(f(R_2)h'(R_2)-f'(R_2)h(R_2))\le 0$$ for any $n\ge 2$ and $R_2>0$. 
%Now, in view of the definition of $f$ and $h$, it results that $$f(R_2)h'(R_2)-f'(R_2)h(R_2)=f(R_2)=f^2(R_2)\frac{d}{dt}\left[\frac{h_\rho(t)}{f_\rho(t)}\right]_{t=1}=f^2_\rho(1)\cdot\frac{d}{dt}\left[\frac{\I_{\frac{n}{2}-1}(t\rho)}{\I_{\frac{n}{2}}(t\rho)}\right]_{t=1}.$$
%In Proposition \ref{amos} it has been proved that the last derivative is negative; then \eqref{eq:g1} holds and the proof is concluded.
\end{proof}

We point out that the previous result holds in particular if $\Omega$ is a $(R_1,R_2)-$nearly annular set.

An immediate consequence of  Proposition \ref{nss} is the following stability result for $\sigma_1(\Omega)$. 

\begin{proposition}
Let $0<R_1<R_2<+\infty$. 
There exists $C_1=C_1(n,R_1,R_2)>0$ such that, for any for any admissible set $\Omega=\Omega_0\setminus\overline{\Theta}$ with $\Omega_0$ a $R_2$-nearly spherical set and the outer boundary parametrized by $u\in W^{1,\infty}(\mathbb S^{n-1})$, and for every $0<\varepsilon<R_2/2$ such that $||u||_{W^{1,\infty}}\leq \varepsilon$, the following stability inequality holds true:
%For any $(R_1,R_2)-$nearly annular set $\Omega$ with $P(\Omega_0)=n\omega_n R_2^{n-1}$, having the outer boundary parametrized by $u\in W^{1,\infty}(\mathbb S^{n-1})$, and for every $0<\varepsilon<R_1/2$ such that $||u||_{W^{1,\infty}}\leq \varepsilon$, there exists $C_1=C_1(n,R_2)>0$ such that 
\begin{equation*}
    \sigma_1(A_{R_1,R_2})-\sigma_1(\Omega)\geq  C_1(n,R_1,R_2)||\nabla_\tau u||_{L^2}^2.
\end{equation*}
Moreover, the constant $C_1(n,R_1,R_2)$ depends continuously (actually analytically) on $R_1$ and $R_2$.
\end{proposition}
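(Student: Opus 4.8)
The plan is to deduce this stability result for $\sigma_1(\Omega)$ directly from Proposition \ref{nss} by a simple algebraic manipulation, using only the already-established Lemma \ref{lemma1}. First I would recall from Lemma \ref{lemma1} that $\sigma_1(\Omega)\le N(\Omega)/D(\Omega)$, with equality when $\Omega=A_{R_1,R_2}$, so that $\sigma_1(A_{R_1,R_2})=N(A_{R_1,R_2})/D(A_{R_1,R_2})=f(R_2)/h(R_2)$. Since $D(\Omega)>0$ and $D(A_{R_1,R_2})>0$ (both are integrals of the strictly positive function $h$ against positive weights), I can write
\[
\sigma_1(A_{R_1,R_2})-\sigma_1(\Omega)\ \ge\ \frac{N(A_{R_1,R_2})}{D(A_{R_1,R_2})}-\frac{N(\Omega)}{D(\Omega)}\ =\ \frac{N(A_{R_1,R_2})D(\Omega)-D(A_{R_1,R_2})N(\Omega)}{D(A_{R_1,R_2})D(\Omega)}.
\]
The numerator is exactly the quantity estimated from below in Proposition \ref{nss}, namely it is $\ge n\omega_n C_1(n,R_1,R_2)\|\nabla_\tau u\|_{L^2}^2$.

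Next I would control the denominator $D(A_{R_1,R_2})D(\Omega)$ from above by a constant depending only on $n,R_1,R_2$. Since $\|u\|_{W^{1,\infty}}\le\varepsilon<R_2/2$, the radius $R_2+u$ ranges in $[R_2/2,3R_2/2]$, the slope term $\sqrt{1+|\nabla_\tau u|^2/(R_2+u)^2}$ is bounded in terms of $\varepsilon$ (hence of $R_2$), and $h$ is continuous hence bounded on the compact interval $[R_1,3R_2/2]$; therefore $D(\Omega)\le C_2(n,R_1,R_2)$ and likewise $D(A_{R_1,R_2})=n\omega_n h(R_2) R_2^{n-1}$ is an explicit constant. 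Combining, I obtain
\[
\sigma_1(A_{R_1,R_2})-\sigma_1(\Omega)\ \ge\ \frac{n\omega_n C_1(n,R_1,R_2)}{D(A_{R_1,R_2})C_2(n,R_1,R_2)}\|\nabla_\tau u\|_{L^2}^2\ =:\ \tilde C_1(n,R_1,R_2)\|\nabla_\tau u\|_{L^2}^2,
\]
which is the claimed inequality after relabelling the constant. For the final assertion about continuous (analytic) dependence on $R_1,R_2$, I would note that $C_1$ from Proposition \ref{nss} is built out of the quantities $A_0,A_1,A_2$ and the coefficients appearing in Lemma \ref{A0A1A2_lemma}, all of which are finite combinations of values and derivatives at $R_2$ of the analytic functions $h,f,\eta,\mu$ (which depend analytically on $R_1$ through $I_{n/2}(R_1),K_{n/2}(R_1)$), together with $R_2^{n-1}$, $\omega_n$, and the bound $C_2$ which is a continuous function of $R_1,R_2$; since the denominators involved are bounded away from zero (using $\eta,\mu>0$ and the strict sign in Lemma \ref{A0A1A2_lemma}), the resulting constant is analytic in $(R_1,R_2)$ on the region $0<R_1<R_2$.

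I do not expect any genuine obstacle here: the content is entirely in Proposition \ref{nss}, and this Proposition is a short corollary. The only mild point requiring care is bookkeeping the upper bound on the denominator uniformly in the admissible $u$ (so that the resulting $\tilde C_1$ does not secretly depend on $\varepsilon$), but this is immediate from $\varepsilon<R_2/2$ and the continuity of $h$; and tracking the analytic dependence of the constant, which is routine once one observes that all building blocks are analytic and the relevant denominators are nonvanishing.
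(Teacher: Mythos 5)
Your argument is correct and is exactly the combination of Lemma \ref{lemma1} and Proposition \ref{nss} that the paper itself invokes (the paper's proof is the single sentence ``The proof follows by combining Lemma \ref{lemma1} and Proposition \ref{nss}''). You simply spell out the elementary algebra and the uniform upper bound on the denominator $D(A_{R_1,R_2})D(\Omega)$ that the paper leaves implicit; this is the right bookkeeping and introduces no gap.
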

\begin{proof}
The proof follows by combining Lemma \ref{lemma1} and Proposition \ref{nss}. 
\end{proof}

\section{Proof of the Main Results}
\label{main_sec}
In this section we provide the proof of the main results. First of all, we use the results of Section \ref{stab_stek} related to the auxiliary problem \eqref{eig_pezzotto} to gain the outer stability result. The strategy  follows an idea developed in \cite{cito2021quantitative,ferone2015conjectured}, linking the stability of the Steklov-Neumann eigenvalue \eqref{eig_pezzotto} to the stability of the Robin-Neumann eigenvalue \eqref{eig_min}. 

Furthermore, in the second part of the section, we develop the key idea of considering an additional term to take into account the asymmetry of the inner boundary. 

Finally, in order to complete the proof of Main Theorem 2, we show that we can obtain the stability result in $\mathcal{T}_{R_1,R_2}$ reducing our study to the nearly annular sets.

\subsection{Outer asymmetry}
At this stage, we are able to restrict our study to the class of nearly annular sets. We will use the quantitative estimate given through the Steklov-Neumann eigenvalues in Section \ref{stab_stek}.%, that will be associated to the Robin-Neumann eigenvalues, for suitable parameters.
\begin{proposition}
For any admissible set $\Omega=\Omega_0\setminus\overline{\Theta}$ with $\Omega_0$ a $R_2$-nearly spherical set, with $P(\Omega_0)=n\omega_n R_2^{n-1}$ and  $|\Omega|=|A_{R_1,R_2}|$, having the outer boundary parametrized by $u\in W^{1,\infty}(\mathbb S^{n-1})$, and for every $0<\varepsilon<R_2/2$ such that $||u||_{W^{1,\infty}}\leq \varepsilon$, % if $\lambda_1(\beta,A_{R_1,R_2})-\lambda_1(\beta,\Omega)<\delta$, 
it holds
\begin{equation} \label{robinnearly}
\lambda_1(\beta,A_{R_1,R_2})-\lambda_1(\beta,\Omega) \geq C(n,\beta,R_1,R_2)\|\nabla_\tau u\|_{L^2}^2%+\|D_\tau v\|_{L^2(\S^{n-1})}^2
.
\end{equation}
\end{proposition}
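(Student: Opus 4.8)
The plan is to transfer the stability of the auxiliary Steklov--Neumann eigenvalue $\sigma_1$ to the Robin--Neumann eigenvalue $\lambda_1$ by exploiting the classical link between the two: a Robin eigenfunction on $\Omega$, suitably renormalized, is essentially a subsolution to the Steklov problem, so the gap $\sigma_1(A_{R_1,R_2})-\sigma_1(\Omega)$ controls the gap $\lambda_1(\beta,A_{R_1,R_2})-\lambda_1(\beta,\Omega)$ from below. Concretely, I would first recall (as in \cite{cito2021quantitative,ferone2015conjectured}) that $\lambda_1(\beta,\Omega)$ and $\sigma_1(\Omega)$ are related by a monotone function of $\beta$: writing $\lambda=\lambda_1(\beta,\Omega)$, the eigenfunction $w$ of \eqref{eig_prob} satisfies $-\Delta w=\lambda w$ in $\Omega$, $\partial_\nu w=-\beta w$ on $\partial\Omega_0$, $\partial_\nu w=0$ on $\partial\Theta$. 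Rescaling, one checks that the function $\tilde w(x):=w(x/\sqrt{-\lambda})$ solves $-\Delta\tilde w+\tilde w=0$ on the rescaled domain $\sqrt{-\lambda}\,\Omega$, with the Neumann condition preserved on the inner boundary and $\partial_\nu\tilde w = -\beta/\sqrt{-\lambda}\;\tilde w$ on the outer boundary; testing the variational characterization \eqref{variational_steklov} of $\sigma_1$ on the rescaled domain with $\tilde w$ yields $\sigma_1(\sqrt{-\lambda}\,\Omega)\le -\beta/\sqrt{-\lambda}$, i.e. an implicit relation $F(\lambda,\Omega)\le 0$ tying $\lambda$ to the Steklov eigenvalue of a dilated copy of $\Omega$.

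Next I would make this quantitative. Set $\lambda^A:=\lambda_1(\beta,A_{R_1,R_2})$ and $\lambda^\Omega:=\lambda_1(\beta,\Omega)$, and let $\sigma^A,\sigma^\Omega$ be the corresponding Steklov eigenvalues on the dilated shells/domains. By the previous step one gets matching equalities/inequalities of the form $\sigma_1(\sqrt{-\lambda^A}\,A)=-\beta/\sqrt{-\lambda^A}$ and $\sigma_1(\sqrt{-\lambda^\Omega}\,\Omega)\le -\beta/\sqrt{-\lambda^\Omega}$. Since dilation by $\sqrt{-\lambda^\Omega}$ sends a $R_2$-nearly spherical $\Omega_0$ (parametrized by $u$) to a $\sqrt{-\lambda^\Omega}R_2$-nearly spherical set parametrized by $\sqrt{-\lambda^\Omega}\,u$, and because $\lambda^\Omega$ stays in a compact subinterval of $(-\infty,0)$ bounded away from $0$ and $-\infty$ (here I use Theorem \ref{max_lambda}, the continuity/monotonicity properties of $\lambda_1(\beta,\cdot)$ from the Proposition in Section \ref{Robin_subsec}, and the smallness of $\varepsilon$ to keep $\lambda^\Omega$ close to $\lambda^A$), the Steklov stability Proposition from Section \ref{stab_stek} applies on these dilated domains with uniform constants, giving
\[
\sigma_1\big(\sqrt{-\lambda^\Omega}\,A_{R_1,R_2}\big)-\sigma_1\big(\sqrt{-\lambda^\Omega}\,\Omega\big)\ \ge\ C_1\,\|\nabla_\tau u\|_{L^2}^2,
\]
up to adjusting $C_1$ by the (bounded, bounded-away-from-zero) dilation factor. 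Combining this with the two relations above and with the $C^1$-dependence (and strict monotonicity in the first slot) of the map $\lambda\mapsto \sigma_1(\sqrt{-\lambda}\,\Omega)+\beta/\sqrt{-\lambda}$, a mean-value-theorem argument converts the Steklov gap into the Robin gap: $\lambda^A-\lambda^\Omega\ge C(n,\beta,R_1,R_2)\,\|\nabla_\tau u\|_{L^2}^2$.

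The main obstacle, and the step deserving the most care, is the uniformity of all constants: one must verify that $\lambda_1(\beta,\Omega)$ is bounded away from $0$ and $-\infty$ uniformly over admissible $R_2$-nearly spherical $\Omega$ with $\|u\|_{W^{1,\infty}}\le\varepsilon$ (so the dilated radii stay in a fixed compact range, and the Steklov Proposition's constant $C_1$, which by that Proposition depends \emph{analytically} on the two radii, can be taken uniform), and that the derivative of $\lambda\mapsto\sigma_1(\sqrt{-\lambda}\,\Omega)$ is bounded away from zero uniformly, so that inverting the relation loses only a controlled constant. I expect the first point to follow from the two-sided bounds $\lambda_1(\beta,\Omega)\le\lambda_1(\beta,A_{R_1,R_2})<0$ of Theorem \ref{max_lambda} together with a crude lower bound on $\lambda_1(\beta,\Omega)$ via the Rayleigh quotient \eqref{Raylegh_quotient} (the volume and outer-perimeter constraints, plus the nearly-spherical hypothesis, give a uniform trace/Poincaré control), and the second from the strict monotonicity coming from the sign of the relevant Steklov data. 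The remaining computations — the explicit scaling, the Taylor expansion of the implicit relation, and bookkeeping of the dilation factors — are routine and would be carried out in the order just sketched.
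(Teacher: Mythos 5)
Your overall plan — transfer the Steklov stability of Section~\ref{stab_stek} to the Robin--Neumann problem via the dilation $\kappa=\sqrt{-\lambda_1(\beta,\Omega)}$ — is exactly what the paper does, but your sketch has a genuine gap in the key step. You establish only the one-sided bound
\[
\sigma_1\bigl(\sqrt{-\lambda_1(\beta,\Omega)}\,\Omega\bigr)\ \le\ -\,\frac{\beta}{\sqrt{-\lambda_1(\beta,\Omega)}},
\]
by inserting the rescaled Robin eigenfunction as a test for the Steklov Rayleigh quotient. That inequality points the wrong way for your chain of estimates: combining it with the Steklov gap $\sigma_1(\kappa A)-\sigma_1(\kappa\Omega)\ge C\|\nabla_\tau u\|_{L^2}^2$ gives a lower bound on $\sigma_1(\kappa A)$ only in terms of $\sigma_1(\kappa\Omega)$, and without a \emph{lower} bound on $\sigma_1(\kappa\Omega)$ relative to $-\beta/\kappa$ the argument does not close. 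What is actually needed (and what the paper proves) is the \emph{equality} $\sigma_1(\kappa\Omega)=-\beta/\kappa$. The reverse inequality is not free: it comes from the opposite direction — take an arbitrary $\psi\in H^1(\kappa\Omega)$, use $\lambda_1(\beta/\kappa,\kappa\Omega)=-1\le R_{\kappa\Omega}(\psi)$ in the rescaled Robin Rayleigh quotient, and rearrange to obtain $-\beta/\kappa\le\bigl(\int|\nabla\psi|^2+\int\psi^2\bigr)\big/\int_{\partial(\kappa\Omega_0)}\psi^2$ for all $\psi$, whence $\sigma_1(\kappa\Omega)\ge -\beta/\kappa$. Your phrasing ``an implicit relation $F(\lambda,\Omega)\le 0$'' does not supply this missing half.

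On the secondary points: once the equality is in place, your mean-value argument on $\lambda\mapsto\sigma_1(\sqrt{-\lambda}A)+\beta/\sqrt{-\lambda}$ would work, but the paper avoids it with a cleaner device. It picks $\bar\beta\le\beta$ with $\lambda_1(\bar\beta,A_{R_1,R_2})=\lambda_1(\beta,\Omega)$, so that the \emph{same} dilation factor $\kappa$ serves both $\Omega$ and $A$ (giving $\sigma_1(\kappa\Omega)=-\beta/\kappa$ and $\sigma_1(\kappa A)=-\bar\beta/\kappa$); the Steklov gap then equals $(\beta-\bar\beta)/\kappa$ directly, and the Robin gap is bounded below by $(\beta-\bar\beta)$ times the explicit derivative $\int_{\partial B_{R_2}}z_\beta^2\big/\int_A z_\beta^2$, using the concavity and monotonicity of $\beta\mapsto\lambda_1(\beta,A)$. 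This bypasses the implicit-function/inversion step entirely. Finally, your concern about keeping $\kappa$ in a compact range is legitimate — the Steklov constant $C_1$ depends analytically on the radii, so uniformity does require such a bound — but the justification ``smallness of $\varepsilon$ keeps $\lambda^\Omega$ close to $\lambda^A$'' is not adequate, since $\varepsilon$ controls only the outer perturbation $u$ and says nothing about the inner hole $\Theta$; a separate lower bound on $\lambda_1(\beta,\Omega)$ (as in the isodiametric Lemma~\ref{isod}, or a crude trace estimate) is needed.
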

\begin{proof}
Let $\delta>0$ and let $\Omega$ be chosen as in the statement. The map $\beta\mapsto\lambda_1(\beta,\Omega)$ is continuous and monotonically increasing from $(-\infty,0)$ onto $(-\infty,0)$. Then, there exists $\Bar{\beta}\leq\beta$ such that
$$
\lambda_1({\beta},\Omega)=\lambda_1({\Bar{\beta}},A_{R_1,R_2}).
$$
Hence,
$$
 \lambda_1({\beta},A_{R_1,R_2})-\lambda_1({\beta},\Omega)
=\lambda_1({\beta},A_{R_1,R_2})-\lambda_1({\Bar{\beta
}},A_{R_1,R_2}).
$$
Let us consider the positive constant 
$$\kappa:= \sqrt{|\lambda_1({\beta},\Omega)|}=\sqrt{|\lambda_1({\Bar{\beta}},A_{R_1,R_2})|};$$
for the rescaled sets $\kappa\Omega$ and $\kappa A_{R_1,R_2}$ it holds that
\begin{align*}
-\kappa^2&=\lambda_1({\beta},\Omega)\\
&=\kappa^2\lambda_1\left({\frac{\beta}{\kappa}},\kappa\Omega\right)\le\kappa^2\frac{\ds\int_{\kappa\Omega}|\nabla \psi|^2\:dx+\frac{\beta}{\kappa}\int_{\partial(\kappa\Omega_0)}\psi^2\:d\mathcal{H}^{n-1}}{\ds\int_{\kappa\Omega}\psi^2\:dx}\quad\forall \psi  \in H^1(\kappa\Omega)
\end{align*}
and
\begin{align*}
-\kappa^2&=\lambda_1({\Bar{\beta}},A_{R_1,R_2})\\
&=\kappa^2\lambda_1\left({\frac{\Bar{\beta}}{\kappa}},\kappa A_{R_1,R_2}\right)\le\kappa^2\frac{\ds\int_{\kappa A_{R_1,R_2}}|\nabla \Psi|^2\:dx+\frac{\Bar{\beta}}{\kappa}\int_{\partial(\kappa B_{R_2})}\Psi^2\:d\mathcal{H}^{n-1}}{\ds\int_{\kappa A_{R_1,R_2}}\Psi^2\:dx}\quad\forall \Psi  \in H^1(\kappa A_{R_1,R_2}),
\end{align*}
with equality holding if $\psi$ and $\Psi$ are, respectively, the eigenfunctions for the Robin-Neumann problem on $\kappa\Omega$ with parameter $\beta$ and on $\kappa A_{R_1,R_2}$ with boundary parameter $\Bar{\beta}$. Thus we get
%$$\int_{\kappa\Omega}|\nabla w|^2\:dx+\int_{\kappa\Omega}w^2\:dx\ge\frac{\beta}{\kappa}\int_{\partial(\kappa\Omega_0)}w^2\:d\mathcal{H}^{n-1}\quad\forall w  \in H^1(\kappa\Omega)$$
%and
%$$\int_{\kappa\Omega^\#}|\nabla z|^2\:dx+\int_{\kappa\Omega^\#}z^2\:dx\ge\frac{\Bar{\beta}}{\kappa}\int_{\partial(\kappa\Omega_0^\#)}z^2\:d\mathcal{H}^{n-1}\quad\forall z \in H^1(\kappa\Omega^\#),$$
%By dividing both sides by the boundary integrals we get that
$$-\frac{\beta}{\kappa}\le\frac{\ds\int_{\kappa\Omega}|\nabla \psi|^2\:dx+\int_{\kappa\Omega}\psi^2\:dx}{\ds\int_{\partial(\kappa\Omega_0)}\psi^2\:d\mathcal{H}^{n-1}}\quad\forall \psi  \in H^1(\kappa\Omega)$$
and
$$-\frac{\Bar{\beta}}{\kappa}\le\frac{\ds\int_{\kappa A_{R_1,R_2}}|\nabla \Psi|^2\:dx+\int_{\kappa A_{R_1,R_2}}\Psi^2\:dx}{\ds\int_{\partial(\kappa B_{R_2})}\Psi^2\:d\mathcal{H}^{n-1}}\quad\forall \Psi \in H^1(\kappa A_{R_1,R_2}).$$
It follows that the infimum for the Steklov-Neumann problem \eqref{pezzotto_sistema} is achieved on $\kappa\Omega$ and $\kappa A_{R_1,R_2}$ if $\psi$ and $\Psi$ are, respectively, the eigenfunctions for the Robin-Neumann problem on $\kappa\Omega$ with parameter $\beta$ and on $\kappa A_{R_1,R_2}$ with boundary parameter $\Bar{\beta}$. Therefore, we obtain
$$\sigma_1(\kappa\Omega)=-\frac{\beta}{\kappa},\quad\sigma_1(\kappa A_{R_1,R_2})=-\frac{\Bar{\beta}}{\kappa}.$$
If we denote with $z_{\beta}$ the first eigenfunction of $A_{R_1,R_2}$ for the Robin-Neumann problem with parameter $\beta$, using the variational characterization of $\lambda_1(\beta,\Omega)$ and $\lambda_1({\Bar{\beta}},A_{R_1,R_2})$ we have that
\begin{align*}
\lambda_1(\beta,A_{R_1,R_2}) -\lambda_1({\beta},\Omega)&=\lambda_1({\beta},A_{R_1,R_2}) -\lambda_1({\Bar{\beta}},A_{R_1,R_2})\geq ({\beta}-\Bar\beta) \frac{\ds\int_{\partial B_{R_2}} z_\beta ^2\:d\mathcal{H}^{n-1}}{\ds\int_{A_{R_1,R_2}} z_\beta ^2\:dx}\\
&= C_2(n,\beta ,R_1,R_2) (\beta -\Bar{\beta})\ge\kappa C_2(n,\beta,R_1,R_2)(\sigma_1 (\kappa A_{R_1,R_2}) -\sigma_1(\kappa\Omega))\\
&\ge \kappa C_2(n,\beta,R_1,R_2) C_1(n,\kappa^{\frac 1n} R_1,\kappa^{\frac 1n} R_2)||\nabla_\tau u||_{L^2}^2%+||D_\tau v||_{L^2}^2)
,
\end{align*}
where $C_1$ is the constant found in Proposition \ref{nss}. The conclusion follows by setting $C:=\kappa C_2 C_1$.
\end{proof}

Now we are in position to prove the main stability result regarding the outer boundary. 

\begin{theorem}\label{teo:esterno}
Let $0<R_1<R_2<+\infty$. There exist a positive constant $C(n,R_1,R_2,\beta)$ such that, for every admissible set $\Omega=\Omega_0\setminus\overline{\Theta}$ with $\Omega_0$ a $R_2$-nearly spherical set, we have
\begin{equation*}%\label{stab_ineq}
\lambda_1(\beta,A_{R_1,R_2})-\lambda_1(\beta,\Omega) \geq C(n,R_1,R_2,\beta)g(\mathcal{A}_\H(\Omega_0))
\end{equation*}
where $\mathcal{A}_\mathcal{H}$ is the Hausdorff asymmetry defined in \eqref{fraenkel_asy} and $g$ is the positive increasing function defined by
\begin{equation*}
g(s)=\begin{cases}
s^2 &\text{if $n=2$}\\
f^{-1}(s^2) &\text{if $n=3$}\\
s^\frac{n+1}{2} &\text{if $n\ge 4$},
\end{cases}
\end{equation*}
with $f(t)=\sqrt{t \log \left(\frac{1}{t}\right)}$ for $0<t<e^{-1}$. 
\end{theorem}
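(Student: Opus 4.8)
\textbf{Proof plan for Theorem \ref{teo:esterno}.}
The plan is to upgrade the estimate in terms of $\|\nabla_\tau u\|_{L^2}^2$ from the previous Proposition into an estimate in terms of the Hausdorff asymmetry $\mathcal{A}_\H(\Omega_0)$. The starting point is that, for a $R_2$-nearly spherical set parametrized by $u\in W^{1,\infty}(\mathbb S^{n-1})$, one can always arrange (after a translation that does not change $P(\Omega_0)$) that $\int_{\mathbb S^{n-1}}u\,d\mathcal H^{n-1}=0$, so that the Poincaré-type inequalities and, crucially, Lemma \ref{fugledee} and Lemma \ref{utile} apply. By Lemma \ref{fugledee}, $\|u\|_{L^\infty}^{n-1}$ is controlled (up to a dimensional constant, and in $n=3$ up to a logarithmic factor that is exactly what the function $f$ encodes) by $\|\nabla_\tau u\|_{L^2}^2\,\|\nabla_\tau u\|_{L^\infty}^{n-3}$; then Lemma \ref{utile} replaces $\|\nabla_\tau u\|_{L^\infty}$ by $3\|u\|_{L^\infty}^{1/2}$. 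Plugging this in and solving for $\|\nabla_\tau u\|_{L^2}^2$ yields $\|\nabla_\tau u\|_{L^2}^2\geq c(n)\,\|u\|_{L^\infty}^{(n+1)/2}$ for $n\geq 4$, $\|\nabla_\tau u\|_{L^2}^2\geq c\,\|u\|_{L^\infty}^2$ for $n=2$ (directly, with no need for Lemma \ref{utile}), and $\|\nabla_\tau u\|_{L^2}^2\geq c\, f(\|u\|_{L^\infty})$ for $n=3$, which after applying the increasing function $g$ to both sides matches the three cases in the definition of $g$.

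Next I would relate $\|u\|_{L^\infty}$ to the Hausdorff asymmetry. Since $\partial\Omega_0=\{\xi(R_2+u(\xi))\}$, the Hausdorff distance $d_{\mathcal H}(\Omega_0,B_{R_2})$ is comparable to $\|u\|_{L^\infty}$; more precisely $d_{\mathcal H}(\Omega_0,B_{R_2})\le \|u\|_{L^\infty}$ and, up to the factor $\omega_n R_2^n$ in the denominator of \eqref{fraenkel_asy} and the fact that $\mathcal A_\H(\Omega_0)$ is an infimum over all balls of the correct perimeter, one gets $\mathcal A_\H(\Omega_0)\le C(n,R_2)\|u\|_{L^\infty}$. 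Here one must be slightly careful: the infimum defining $\mathcal A_\H(\Omega_0)$ ranges over balls $B_R(x_0)$ with $P(\Omega_0)=P(B_R(x_0))$, i.e. $R=R_2$ since $P(\Omega_0)=n\omega_n R_2^{n-1}$, but centered at arbitrary $x_0$; choosing $x_0=0$ gives the bound $\mathcal A_\H(\Omega_0)\le d_{\mathcal H}(\Omega_0,B_{R_2})/(\omega_n R_2^n)\le \|u\|_{L^\infty}/(\omega_n R_2^n)$. Since $g$ is increasing, $g(\mathcal A_\H(\Omega_0))\le g\!\left(\|u\|_{L^\infty}/(\omega_n R_2^n)\right)$, and combining with the reverse bounds from the previous paragraph (absorbing all dimensional and geometric constants into a single $C(n,R_1,R_2,\beta)$) gives
\begin{equation*}
\lambda_1(\beta,A_{R_1,R_2})-\lambda_1(\beta,\Omega)\geq C(n,\beta,R_1,R_2)\|\nabla_\tau u\|_{L^2}^2\geq C(n,R_1,R_2,\beta)\,g(\mathcal A_\H(\Omega_0)),
\end{equation*}
which is the claim.

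I expect the main obstacle to be the bookkeeping in the $n=3$ case: there the relation between $\|u\|_{L^\infty}$ and $\|\nabla_\tau u\|_{L^2}^2$ is not a clean power law but involves the implicit logarithmic term $\|\nabla_\tau u\|_{L^2}^2\gtrsim \|u\|_{L^\infty}^2\big/\log(1/\|u\|_{L^\infty})$ after substituting Lemma \ref{utile}, and one has to check that this is exactly inverted by $g(s)=f^{-1}(s^2)$ with $f(t)=\sqrt{t\log(1/t)}$ on the range $0<t<e^{-1}$ — i.e. that $f$ is invertible there and that the direction of the inequality is preserved under $f^{-1}$, which requires $\|u\|_{L^\infty}$ (equivalently $\mathcal A_\H(\Omega_0)$) to be small, hence the near-sphericity hypothesis is used in an essential way. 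A secondary technical point is justifying the reduction to the case $\int_{\mathbb S^{n-1}}u=0$: a translation of $\Omega_0$ changes neither $\lambda_1(\beta,\Omega)$ nor $\mathcal A_\H(\Omega_0)$ nor $P(\Omega_0)$, but one must verify that after translating to the barycenter the new parametrization still satisfies $\|u\|_{W^{1,\infty}}\le R_2/2$ (possibly after shrinking the admissible range of $\varepsilon$), so that all the cited lemmata remain applicable.
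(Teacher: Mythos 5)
Your overall strategy matches the paper's: start from the stability estimate $\lambda_1(\beta,A_{R_1,R_2}) - \lambda_1(\beta,\Omega) \ge C \|\nabla_\tau u\|_{L^2}^2$ of the previous proposition, upgrade $\|\nabla_\tau u\|_{L^2}^2$ to a lower bound in terms of $\|u\|_{L^\infty}$ via Lemma \ref{fugledee} and Lemma \ref{utile}, case-split by dimension to obtain the modulus $g$, and finally compare $\|u\|_{L^\infty}$ with $\mathcal{A}_\H(\Omega_0)$. However, there is a genuine gap at the very first step. You assert that "after a translation that does not change $P(\Omega_0)$, one can always arrange that $\int_{\mathbb{S}^{n-1}}u\,d\mathcal{H}^{n-1}=0$", so that Lemma \ref{fugledee} applies to $u$ directly. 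This reduction is not available: translating $\Omega_0$ by a small vector $a$ perturbs the radial parametrization by $u \mapsto u - a\cdot\xi + O(|a|^2)$, and since $\int_{\mathbb{S}^{n-1}}\xi_i\,d\mathcal{H}^{n-1}=0$, the mean of $u$ is unchanged to first order. Translations control the first spherical harmonics (barycenter), not the zeroth harmonic (mean). The mean of $u$ is instead pinned down, up to $O(\varepsilon^2)$, by the perimeter constraint in \eqref{moreover_1}, and it need not vanish exactly — so the hypothesis of Lemma \ref{fugledee} fails for $u$ itself. (Your "secondary technical point" about preserving $\|u\|_{W^{1,\infty}}\le R_2/2$ after translation is also left unresolved, but the primary issue is that the required translation need not exist at all.)

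The paper sidesteps this by introducing an auxiliary function: set $\rho_2>0$ so that $|B_{\rho_2}|=|\Omega_0|$, and let $h:=(R_2+u)^n-\rho_2^n$. By the volume formula $\tfrac{1}{n}\int_{\mathbb{S}^{n-1}}(R_2+u)^n\,d\mathcal{H}^{n-1}=\omega_n\rho_2^n$, the function $h$ has exactly zero mean by construction. One then shows that $\|h\|_{L^\infty}\asymp\|u\|_{L^\infty}$ and $\|\nabla_\tau h\|_{L^2}\asymp\|\nabla_\tau u\|_{L^2}$ with constants depending only on $n$ and $R_2$ (using $|R_2-\rho_2|\lesssim\|u\|_{W^{1,\infty}}$ and $\nabla_\tau h=n(R_2+u)^{n-1}\nabla_\tau u$), applies Lemma \ref{fugledee} to $h$, and transfers the bound back to $u$. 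This replaces your translation step; the remainder of your argument — inserting Lemma \ref{utile}, solving for $\|\nabla_\tau u\|_{L^2}^2$ in each dimensional regime, identifying the result with $g(\|u\|_{L^\infty})$, and bounding $\mathcal{A}_\H(\Omega_0)$ above by a multiple of $\|u\|_{L^\infty}$ via the $x_0=0$ competitor in \eqref{fraenkel_asy} — coincides with the paper's and is correct.
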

\begin{proof}  
We have that
$$
\partial\Omega_0=\{\xi(R_2+u(\xi)),\,\,\xi\in \S^{n-1}\}
$$
for some $u\in W^{1,\infty}(\S^{n-1})$ with $\|u\|_{W^{1,\infty}}<R_2/2$. Let $\rho_2$ such that $|B_{\rho_2}|=|\Omega_0|$; then, we have that
\begin{equation} \label{area}
\frac{1}{n} \int_{\S^{n-1}}(R_2+u)^nd\H^{n-1}=\omega_n \rho_2^n.
\end{equation}
Now, let us define the function $h:=(R_2+u)^n-\rho_2^n$.
By a straightforward expansion of the left hand side of \eqref{area}, we get
$$
R_2^n-\rho_2^n=-\frac{1}{n\omega_n}\displaystyle \int_{\S^{n-1}} \sum_{k=1}^n \binom{n}{k} u^kR_2^{n-k} d\H^{n-1},
$$
which immediately implies $|R_2- \rho _2|<C(n)\|u\|_{W^{1,\infty}}$.
Hence, it holds \begin{equation}
    \label{norm_control}
C_2\|h\|_{L^{\infty}}\leq \|u\|_{L^{\infty}} \leq C_3 \|h\|_{L^{\infty}},
\end{equation}where $C_2$ and $C_3$ are constant depending only on the dimension.
Moreover, since
$$\nabla_\tau h = n(R_2 +u)^{n-1} \nabla_\tau u,$$
then
$$
C_4 \|\nabla_\tau u\|_{L^2}\leq \|\nabla_\tau h\|_{L^2} 
\leq C_5 \|\nabla_\tau u \|_{L^2},
$$
where $C_4,C_5$ depend on the dimension and on $R_2$.
Moreover, from \eqref{area}, we know that  $h$ has zero integral, thus we can apply Lemma \ref{fugledee} to $h$ and use \eqref{norm_control} to infer
\begin{eqnarray}\label{fu}
||u||_{L^\infty}^{n-1} \le 
\begin{cases}
\pi \|\nabla_\tau u\|_{L^2} & n=2\\
4||\nabla_\tau u||^2_{L^{2}} \log \frac{8e ||\nabla_\tau u||_{L^{\infty}}^{n-1}}{||\nabla_\tau u||_{L^{2}}^{2}} & n=3
\\[.2cm]
C(n)  ||\nabla_\tau u||_{L^{2}}^{2} ||\nabla_\tau u||_{L^{\infty}}^{n-3}\,\, & n\ge 4.
\end{cases}
\end{eqnarray}
For the sake of brevity, we show the conclusion of the proof only for $n \geq 4$, since for $n=2,3$ the argument is the same. Inequality \eqref{fu} and Lemma \ref{utile} lead to
$$
\|u\|_{L^{\infty}(\S^{n-1})}^{n-1} \leq C(n)  \|D_\tau u\|_{L^{2}(\S^{n-1})}^{2} \| u\|_{L^{\infty}(\S^{n-1})}^\frac{n-3}{2},
$$
and hence we have
\begin{equation*}% \label{ultima}
    \|u\|_{L^{\infty}(\S^{n-1})}^\frac{n+1}{2} \leq C(n)  \|D_\tau u\|_{L^{2}(\S^{n-1})}^{2}.
\end{equation*}
Recalling now that $R_2\|u\|_{L^\infty}=d_\mathcal{H}(\Omega_0,B_{R_2})$, we get
$$\|\nabla_\tau u\|_{L^{2}(\S^{n-1})}^{2}\ge C_6(n,R_2)g(d_\mathcal{H}(\Omega_0,B_{R_2})).$$
Plugging the previous estimate in \eqref{robinnearly} and recalling that $\delta_0$ depends only on $n,\beta,R_1,R_2$, we finally obtain
\begin{align*}
\lambda_1(\beta,A_{R_1,R_2})-\lambda_1(\beta,\Omega)&\geq C_7(n,\beta,R_1,R_2,\delta_0)\|\nabla_\tau u\|_{L^2(\S^{n-1})}^2\geq C(n,\beta,R_1,R_2)g(\mathcal{A}_\mathcal{H}(\Omega_0)).
\end{align*}
\end{proof}

   % Specifically, by employing the same notation used throughout the paper, one aims the following kind of estimate:  

\subsection{Inner asymmetry}%\label{inner_sec}
In this section we explain the key idea to keep into account both inner and outer perturbations of the optimal set $A_{R_1,R_2}$. %In order to give an estimate of the asymmetry of $\Theta$, we use a completely different approach.
As previously explained, the Neumann condition on the inner boundary and the measure constraint on $\Omega=\Omega_0\setminus\overline{\Theta}$ do not seem to allow a Fuglede type approach, since we do not have any control on $d_{\mathcal{H}}(\Theta,B_{R_1})$. Intuitively, this is due to lack of a constraint on $P(\Theta)$ and to the fact that in the Rayleigh quotient \eqref{Raylegh_quotient}, there is no boundary integral on $\partial\Theta$. Moreover, if $\Omega$ is nearly annular and one tries to improve the quantitative result of Proposition \ref{nss}, for example by using the measure constraint \eqref{moreover_2}, only the terms involving the outer boundary profile $u$ remain and the ones containing the inner boundary profile $v$ cancel each other out.

By using the weak asymmetry relative to the outer box defined in \eqref{eq:atilde}, we prove the following result.

\begin{theorem}
\label{teo:buco}
Let $0<R_1<R_2<+\infty$. Then, for every $\Omega=\Omega_0\setminus\overline{\Theta}\subset\R^n$ with $\Omega_0$ convex and $\Theta$ an admissible hole such that $\overline{\Theta}\subset\Omega_0$, $P(\Omega_0)=P(B_{R_2})$ and $|\Omega|=|A_{R_1,R_2}|$,
it holds
\begin{equation*}
\lambda_1(\beta,A_{R_1,R_2})-\lambda_1(\beta,\Omega)\ge\min\{1,|\lambda_1(\beta,A_{R_1,R_2})|\}\tilde{\mathcal{A}}(\Theta;\Omega_0),%\int_{\Theta\setminus K}(z_{\Omega_0}^2-m_K^2)\:dx
\end{equation*}
where $\tilde{\mathcal{A}}(\Theta;\Omega_0)$ is defined in \eqref{eq:atilde}.
\end{theorem}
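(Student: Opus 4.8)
The plan is to exhibit an explicit competitor in the Rayleigh quotient \eqref{Raylegh_quotient} for $\Omega = \Omega_0\setminus\overline\Theta$, namely the web-function $w_{\Omega_0}$ defined in \eqref{GFerone}, and to compare its Rayleigh quotient directly against $\lambda_1(\beta, A_{R_1,R_2})$, isolating the missing mass of $\Theta$ relative to the inner parallel $K_{\Omega_0}$. Recall that $w_{\Omega_0}$ is constant ($=z_m$) on the inner parallel set $K_{\Omega_0}$, that $|\nabla w_{\Omega_0}|_{w_{\Omega_0}=t}=|\nabla z|_{z=t}$, and that by construction $K_{\Omega_0}$ has the same measure as $\Theta$. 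Since $w_{\Omega_0}$ is admissible for $\lambda_1(\beta,\Omega)$, we have
\[
\lambda_1(\beta,\Omega)\le R_\Omega(w_{\Omega_0})=\frac{\ds\int_{\Omega_0\setminus\overline\Theta}|\nabla w_{\Omega_0}|^2\,dx+\beta\int_{\partial\Omega_0}w_{\Omega_0}^2\,d\H^{n-1}}{\ds\int_{\Omega_0\setminus\overline\Theta}w_{\Omega_0}^2\,dx}.
\]

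Next I would split the integrals over $\Omega_0\setminus\overline\Theta$ using $K_{\Omega_0}$ as a reference: write $\Omega_0\setminus\overline\Theta=(\Omega_0\setminus\overline{K_{\Omega_0}})\cup(K_{\Omega_0}\setminus\overline\Theta)$ and note $\Omega_0\setminus\overline{K_{\Omega_0}}=(\Omega_0\setminus\overline\Theta)\setminus(\Theta\setminus K_{\Omega_0})\cup\ldots$ — more precisely, since $|\Theta|=|K_{\Omega_0}|$, one has $|\Theta\setminus K_{\Omega_0}|=|K_{\Omega_0}\setminus\Theta|$. On $K_{\Omega_0}\setminus\Theta$ the function $w_{\Omega_0}$ equals $z_m$ (so $|\nabla w_{\Omega_0}|=0$ and $w_{\Omega_0}^2=z_m^2$ there), while on $\Theta\setminus K_{\Omega_0}$ we have $w_{\Omega_0}\ge z_m$. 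The layer-cake / coarea argument of \cite{paoli2020sharp} (the proof of Theorem \ref{max_lambda}) shows that $\int_{\Omega_0\setminus\overline{K_{\Omega_0}}}|\nabla w_{\Omega_0}|^2\,dx\le\int_{A_{R_1,R_2}}|\nabla z|^2\,dx$, that $\int_{\Omega_0\setminus\overline{K_{\Omega_0}}}w_{\Omega_0}^2\,dx\ge\int_{A_{R_1,R_2}}z^2\,dx=1$, and that $\beta\int_{\partial\Omega_0}w_{\Omega_0}^2\,d\H^{n-1}\le\beta\int_{\partial B_{R_2}}z^2\,d\H^{n-1}$ (here the sign of $\beta<0$ and the Robin comparison are used exactly as in \cite{paoli2020sharp}). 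Combining, the numerator of $R_\Omega(w_{\Omega_0})$ is bounded above by
\[
\int_{A_{R_1,R_2}}|\nabla z|^2\,dx+\beta\int_{\partial B_{R_2}}z^2\,d\H^{n-1}-\int_{\Theta\setminus K_{\Omega_0}}|\nabla w_{\Omega_0}|^2\,dx=\lambda_1(\beta,A_{R_1,R_2})-\int_{\Theta\setminus K_{\Omega_0}}|\nabla w_{\Omega_0}|^2\,dx,
\]
using $\|z\|_{L^2(A_{R_1,R_2})}=1$; and the denominator equals $\int_{\Omega_0\setminus\overline{K_{\Omega_0}}}w_{\Omega_0}^2\,dx-\int_{\Theta\setminus K_{\Omega_0}}w_{\Omega_0}^2\,dx+\int_{K_{\Omega_0}\setminus\Theta}z_m^2\,dx$, which (using $|\Theta\setminus K_{\Omega_0}|=|K_{\Omega_0}\setminus\Theta|$) is $\ge 1-\int_{\Theta\setminus K_{\Omega_0}}(w_{\Omega_0}^2-z_m^2)\,dx$.

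Then I would feed these two bounds into the quotient and rearrange. Writing $\Lambda:=\lambda_1(\beta,A_{R_1,R_2})<0$, $E:=\int_{\Theta\setminus K_{\Omega_0}}|\nabla w_{\Omega_0}|^2\,dx\ge0$ and $F:=\int_{\Theta\setminus K_{\Omega_0}}(w_{\Omega_0}^2-z_m^2)\,dx\ge0$ (so $\tilde{\mathcal A}(\Theta;\Omega_0)=E+F$), we obtain
\[
\lambda_1(\beta,\Omega)\le\frac{\Lambda-E}{1-F}.
\]
Now $\Lambda-E\le\Lambda<0$ and $0<1-F\le1$ (one checks $F\le1$, e.g. because the denominator $\int_{\Omega_0\setminus\overline\Theta}w_{\Omega_0}^2\,dx$ is positive, so $1-F>0$), hence $\frac{\Lambda-E}{1-F}\le(\Lambda-E)(1-F)^{-1}\le(\Lambda-E)+\Lambda\cdot\frac{-F}{1}\cdot(\text{sign bookkeeping})$; cleanly, from $\frac{\Lambda-E}{1-F}\le\Lambda+\frac{\Lambda F-E+\Lambda F}{\ldots}$ — the efficient route is: $\Lambda-\frac{\Lambda-E}{1-F}=\frac{-\Lambda F+E-\Lambda F\cdot0}{1-F}=\frac{E-\Lambda F}{1-F}\ge E-\Lambda F\ge E+|\Lambda|F\ge\min\{1,|\Lambda|\}(E+F)=\min\{1,|\Lambda|\}\,\tilde{\mathcal A}(\Theta;\Omega_0)$, where we used $1-F\le1$, $\Lambda<0$, and $E,F\ge0$. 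This gives
\[
\lambda_1(\beta,A_{R_1,R_2})-\lambda_1(\beta,\Omega)\ge\min\{1,|\lambda_1(\beta,A_{R_1,R_2})|\}\,\tilde{\mathcal A}(\Theta;\Omega_0),
\]
which is the claim. The main obstacle I anticipate is the careful bookkeeping of the set decomposition and signs in the comparison of numerator and denominator — in particular justifying the three web-function inequalities by quoting the coarea/layer-cake computation of \cite{paoli2020sharp} and correctly accounting for the equal-measure pieces $\Theta\setminus K_{\Omega_0}$ and $K_{\Omega_0}\setminus\Theta$; the final arithmetic step (passing from the quotient bound to the additive bound with constant $\min\{1,|\Lambda|\}$) is elementary once $0<1-F\le 1$ is secured.
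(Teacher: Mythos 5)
Your approach is essentially the same as the paper's: test with the web function $w_{\Omega_0}$, split $\Omega_0\setminus\overline{\Theta}$ along the inner parallel $K_{\Omega_0}$ using $|\Theta\setminus K_{\Omega_0}|=|K_{\Omega_0}\setminus\Theta|$, import the shell comparisons from \cite{paoli2020sharp}, and then extract the additive bound from the resulting quotient inequality. Your final algebraic step, $\Lambda-\frac{\Lambda-E}{1-F}=\frac{E-\Lambda F}{1-F}\ge E+|\Lambda| F\ge\min\{1,|\Lambda|\}(E+F)$, is a clean way to reach the constant $\min\{1,|\Lambda|\}$; the paper gets there by first clearing the denominator and then replacing $-\lambda_1(\beta,\Omega)$ by $|\Lambda|$, which is equivalent.

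There is, however, a sign error that breaks the key intermediate step as you have written it. The comparison you quote from \cite{paoli2020sharp} for the $L^2$ norm goes the other way: one has $\int_{\Omega_0\setminus K_{\Omega_0}}w_{\Omega_0}^2\,dx\le\int_{A_{R_1,R_2}}z^2\,dx=1$, not $\ge$ (this is \eqref{eq:compnorm} in the paper; it comes from $|\Omega_0\setminus K_{\Omega_0}|=|A_{R_1,R_2}|$, $G\ge z_m$, and the fact that the inner parallels of $\Omega_0$ have smaller perimeter than those of $B_{R_2}$). Consequently the denominator obeys $\int_{\Omega_0\setminus\overline\Theta}w_{\Omega_0}^2\,dx\le 1-F$, not $\ge 1-F$. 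This is not cosmetic: since the numerator satisfies $N\le\Lambda-E<0$, you need the denominator to be \emph{smaller} than $1-F$ to conclude $N/D\le(\Lambda-E)/D\le(\Lambda-E)/(1-F)$; if instead $D\ge 1-F$ held, dividing a negative number by the larger quantity would reverse the inequality and your bound $\lambda_1(\beta,\Omega)\le\frac{\Lambda-E}{1-F}$ would not follow from what you stated. (Your own parenthetical — that $1-F>0$ because $D>0$ — already presupposes $D\le 1-F$, so you implicitly used the correct direction.) Once the two $\ge$ signs are corrected to $\le$, the proof is complete and matches the paper's.
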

\begin{proof}
Let $z\in H^1(A_{R_1,R_2})$ be the first positive eigenfunction of $\lambda_1(\beta,A_{R_1,R_2})$ with $\|z\|_{L^2(A_{R_1,R_2})}=1$, let $w_{\Omega_0}\in H^1(\Omega_0)$  the test function defined in \eqref{GFerone} and $t_{\Omega_0},K_{\Omega_0}$ as in the Definition \ref{inner_asymmetry_def}. 

We observe that $t_{\Omega_0}\ge R_2-R_1$ in view of the isoperimetric inequality; it follows $w_{\Omega_0}\ge z_m$ in $\Omega_0\setminus\overline{ K_{\Omega_0}}$ and $w_{\Omega_0}=z_m$ in $K_{\Omega_0}$.%\textcolor{red}{with the equality holding if and only if $\Omega_0=B_{R_2}$ and consequently $K=B_{R_1}$. }

We point out that %provided that $\Omega_0$ is convex and nearly spherical, $\Omega_0\setminus\overline{K_{\Omega_0}}$ is an admissible set for Problem \eqref{robinproblem} and that 
the choice of $w_{\Omega_0}$ depends only on $\Omega_0$ and thus it is a suitable web function for both $\Omega_0\setminus\overline{\Theta}$ and $\Omega_0\setminus\overline{K_{\Omega_0}}$. From \cite[eq.(3.9),(3.10),(3.11)]{paoli2020sharp} applied to the admissible set $\Omega_0\setminus\overline{K_{\Omega_0}}$, we have that
\begin{equation}
    \label{eq:compnorm}
\begin{split}
\int_{\Omega_0\setminus K_{\Omega_0}}|\nabla w_{\Omega_0}|^2\:dx\le\int_{A_{R_1,R_2}}|\nabla z|^2\:dx,&\quad \int_{\partial\Omega_0}w^2_{\Omega_0}\:d\mathcal{H}^{n-1}=\int_{\partial B_{R_2}}z^2\:d\mathcal{H}^{n-1},\\
\int_{\Omega_0\setminus K_{\Omega_0}}w^2_{\Omega_0}\:dx&\le\int_{A_{R_1,R_2}}z^2\:dx.
\end{split}
\end{equation}
Moreover, it holds the following
\begin{equation}
\label{eq:theta-kappa}
    \begin{split}     
\int_{\Omega_0}w_{\Omega_0}^2\:dx&=\int_{\Omega_0\setminus \Theta}w_{\Omega_0}^2\:dx+\int_{\Theta\setminus K_{\Omega_0}}w_{\Omega_0}^2\:dx+\int_{K_{\Omega_0}\cap\Theta}w_{\Omega_0}^2\:dx\\
%&\le\int_{A_{R_1,R_2}}z^2\:dx+\int_{\Theta\setminus K}w_{\Omega_0}^2\:dx+\int_{K\cap\Theta}w_{\Omega_0}^2\:dx.
    \end{split}
\end{equation}
and, by using \eqref{eq:compnorm}, it holds
\begin{equation}\label{eq:full}
\begin{split}
\int_{\Omega_0}w_{\Omega_0}^2\:dx&=\int_{\Omega_0\setminus K_{\Omega_0}}w_{\Omega_0}^2\:dx+\int_{K_{\Omega_0}\setminus\Theta}w_{\Omega_0}^2\:dx+\int_{K_{\Omega_0}\cap\Theta}w_{\Omega_0}^2\:dx\\
&\le\int_{A_{R_1,R_2}}z^2\:dx+z_m^2|\Theta\setminus K_{\Omega_0}|+\int_{K_{\Omega_0}\cap\Theta}w_{\Omega_0}^2\:dx,
\end{split}
\end{equation}
since $|K_{\Omega_0}\setminus\Theta|=|\Theta\setminus K_{\Omega_0}|$, which follows from the fact that $|K_{\Omega_0}|=|\Theta|$. 
Therefore, by combining  \eqref{eq:theta-kappa} and \eqref{eq:full}, we get 
\begin{equation}  \label{eq:quantinorm}
\int_{\Omega_0\setminus \Theta}w^2_{\Omega_0}\:dx\le\int_{A_{R_1,R_2}}z^2\:dx-\int_{\Theta\setminus K_{\Omega_0}}(w_{\Omega_0}^2-z_m^2)\:dx.
\end{equation}

Now, by splitting in the same way also $\int_{\Omega_0}|\nabla w_{\Omega_0}|^2\:dx$ and recalling that $w_{\Omega_0}$ is constant in $K_{\Omega_0}$, we get

\begin{equation}\label{split_grad_1}
\begin{split}     
\int_{\Omega_0}|\nabla w_{\Omega_0}|^2\:dx&=\int_{\Omega_0\setminus \Theta}|\nabla w_{\Omega_0}|^2\:dx+\int_{\Theta\setminus K_{\Omega_0}}|\nabla w_{\Omega_0}|^2\:dx+\int_{K_{\Omega_0}\cap\Theta}|\nabla w_{\Omega_0}|^2\:dx\\
&=\int_{\Omega_0\setminus \Theta}|\nabla w_{\Omega_0}|^2\:dx+\int_{\Theta\setminus K_{\Omega_0}}|\nabla w_{\Omega_0}|^2\:dx
\end{split}
\end{equation}
and, in view of \eqref{eq:compnorm},
\begin{equation}\label{split_grad_2}
\begin{split}
\int_{\Omega_0}|\nabla w_{\Omega_0}|^2\:dx&=\int_{\Omega_0\setminus K_{\Omega_0}}|\nabla w_{\Omega_0}|^2\:dx+\int_{K_{\Omega_0}}|\nabla w_{\Omega_0}|^2\:dx\le\int_{A_{R_1,R_2}}|\nabla z|^2\:dx.
\end{split}
\end{equation}
By combining \eqref{split_grad_1} and \eqref{split_grad_2} we get
\begin{equation}  \label{eq:quantigrad}
\int_{\Omega_0\setminus \Theta}|\nabla w_{\Omega_0}|^2\:dx\le\int_{A_{R_1,R_2}}|\nabla z|^2\:dx-\int_{\Theta\setminus K_{\Omega_0}}|\nabla w_{\Omega_0}|^2\:dx.
\end{equation}

Now, by using $w_{\Omega_0}$ as a test function for $\lambda_1(\beta,\Omega)$, we have, in view of \eqref{eq:compnorm}, \eqref{eq:quantinorm} and \eqref{eq:quantigrad}
\begin{align*}
\lambda_1(\beta,\Omega)&\le\frac{\displaystyle\int_{\Omega_0\setminus \Theta}|\nabla w_{\Omega_0}|^2\:dx+\beta\int_{\partial\Omega_0}w^2_{\Omega_0}\:d\mathcal{H}^{n-1}-\int_{\Theta\setminus K_{\Omega_0}}|\nabla w_{\Omega_0}|^2\:dx}{\displaystyle\int_{\Omega_0\setminus \Theta}w^2_{\Omega_0}\:dx}\\
&\le\frac{\displaystyle\int_{A_{R_1,R_2}}|\nabla z|^2\:dx+\beta\int_{\partial A_{R_1,R_2}}z^2\:d\mathcal{H}^{n-1}-\int_{\Theta\setminus K_{\Omega_0}}|\nabla w_{\Omega_0}|^2\:dx}{\displaystyle\int_{A_{R_1,R_2}}z^2\:dx-\int_{\Theta\setminus K_{\Omega_0}}(w_{\Omega_0}^2-z_m^2)\:dx}\\
&=\frac{\displaystyle\lambda_1(\beta,A_{R_1,R_2})-\int_{\Theta\setminus K_{\Omega_0}}|\nabla w_{\Omega_0}|^2\:dx}{\displaystyle 1-%\frac{1}{\|z\|_{L^2(A_{R_1,R_2})}^2}
\int_{\Theta\setminus K_{\Omega_0}}(w_{\Omega_0}^2-z_m^2)\:dx},
\end{align*}
and thus, from $-\lambda_1(\beta,\Omega)\ge-\lambda_1(\beta,A_{R_1,R_2})=|\lambda_1(\beta,A_{R_1,R_2})|$, we finally obtain
\begin{align*}
\lambda_1(\beta,A_{R_1,R_2})-\lambda_1(\beta,\Omega)&\ge-
\lambda_1(\beta,\Omega)
\int_{\Theta\setminus K_{\Omega_0}}(w_{\Omega_0}^2-z_m^2)\:dx+\int_{\Theta\setminus K_{\Omega_0}}|\nabla w_{\Omega_0}|^2\:dx\\
&\ge
|\lambda_1(\beta,A_{R_1,R_2})|
\int_{\Theta\setminus K_{\Omega_0}}(w_{\Omega_0}^2-z_m^2)\:dx+\int_{\Theta\setminus K_{\Omega_0}}|\nabla w_{\Omega_0}|^2\:dx,
\end{align*}
achieving the thesis.
\end{proof}

%We remark that the previous theorem provides a suitable constant for our purposes: the dependence on $\Omega_0$ and $\Theta$ appears only in the term $\tilde{\mathcal A}(\Theta;\Omega_0)$, while the multiplicative constant $|\lambda_1(\beta,A_{R_1,R_2})|$ depends only on the parameters of the problem $n,\beta,R_1,R_2$. 

\subsection{Towards the nearly annular sets}

We want to show that the stability result in the class $\mathcal{T}_{R_1,R_2}$, defined in \eqref{admissiblesets}, gets meaningful if we reduce to nearly annular sets. To do that, the first step is to provide a uniform bound on the diameters of the sets  whose eigenvalues are "not so far" from the optimum.  This kind of isodiametric control of the eigenvalues is common when dealing with maximization problems in shape optimization and it often provides extra compactness when working on existence problems without a bounded design region (see for instance the isodiametric control of the Robin spectrum proved in \cite{bucurcito} or for the Steklov spectrum proved in \cite{bbg}, both valid also for higher eigenvalues in a wider class of sets).

%Before proving the result, we point out that this isodiametric control holds also for holed domains which are not in the class $\mathcal{T}_{R_1,R_2}$, defined in \eqref{admissiblesets}, so we give the proof in this general case (in the class $\mathcal{T}_{R_1,R_2}$ the proof is immediate).

\begin{lemma}\label{isod}
Let $0<R_1<R_2<+\infty$. There exists a positive constant $C(R_1,R_2,n,\beta)$ such that, for every $\Omega=\Omega_0\setminus\overline{\Theta}\subset\R^n$ with $\Omega_0$ and $\Theta$ convex sets such that $\overline{\Theta}\subset\Omega_0$, $P(\Omega_0)=P(B_{R_2})$ and $|\Omega|=|A_{R_1,R_2}|$, if $$\lambda_1({\beta},\Omega)>2 \lambda_1({\beta},A_{R_1,R_2})$$
then
$$\text{diam}(\Omega)<C(R_1,R_2,n,\beta).$$
\begin{proof}
The proof is a straightforward adaptation to our context of \cite[Lemma 3.5]{cito2021quantitative}.

Let us argue by contradiction and suppose that there exists a sequence $\{\Omega_j\}_{j\in\N}$ of domains of the form $\Omega_j={\Omega_0}_j\setminus\overline{\Theta_j}$ such that 
$$\lambda_1({\beta},A_{R_1,R_2})\ge\lambda_1({\beta},\Omega_j)>2 \lambda_1({\beta},A_{R_1,R_2})\ \text{and}\ \text{diam}(\Omega_j)=\text{diam}({\Omega_0}_j)\to+\infty.$$
In view of the convexity of ${\Omega_0}_j$ and of the constraint $P({\Omega_0}_j)=m$, the sequence $\{\rho_j\}_{j\in\N}$ of the inradii of $\{{\Omega_0}_j\}_{j\in\N}$ is necessarily vanishing. Recalling that, for any convex set $A$ with inradius $\rho$, it holds
$$|A|\le\rho P(A)$$
(see, for instance, \cite[Prop. 2.4.3]{bucur2004variational}), we deduce that $|{\Omega_0}_j|$ vanishes as $j$ goes to $+\infty$. Now, using the charachteristic function $\chi_{\Omega_j}$ as a test for $\lambda_1({\beta},\Omega_j)$, we obtain
$$\lambda_1({\beta},\Omega_j)\le-\beta\frac{P({\Omega_0}_j)}{|\Omega_j|}\le-\beta\frac{P({\Omega_0}_j)}{{|{\Omega_0}_j}|}\to-\infty,$$
in contradiction with the lower bound on $\lambda_1({\beta},\Omega_j)$.
\end{proof}
\end{lemma}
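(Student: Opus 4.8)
\emph{Proof proposal.} The plan is to argue by contradiction, playing the two geometric constraints — the fixed outer perimeter $P(\Omega_0)=P(B_{R_2})$ and the convexity of $\Omega_0$ — against a hypothetically diverging diameter, and then to close with the elementary test function $\chi_{\Omega}$ in the Rayleigh quotient \eqref{Raylegh_quotient}. First I would record a trivial reduction: since $\Omega_0$ is open and $\overline{\Theta}\subset\Omega_0$, the compact set $\overline{\Theta}$ lies at positive distance from $\partial\Omega_0$, so $\Omega$ coincides with $\Omega_0$ near $\partial\Omega_0$ and $\operatorname{diam}(\Omega)=\operatorname{diam}(\Omega_0)$. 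Then I would suppose, for contradiction, that there is a sequence $\Omega_j=(\Omega_0)_j\setminus\overline{\Theta_j}$ in the stated class with $\lambda_1(\beta,\Omega_j)>2\lambda_1(\beta,A_{R_1,R_2})$ while $\operatorname{diam}((\Omega_0)_j)\to+\infty$.

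The heart of the argument is to show that the inradii $\rho_j:=\rho((\Omega_0)_j)$ vanish; this is the step that genuinely uses convexity (the claim fails for generic sets of bounded perimeter) and is the one I expect to require the most care. Letting $c_j$ be an incenter, $(\Omega_0)_j$ contains both the ball $B_{\rho_j}(c_j)$ and some point $x_j$ with $|x_j-c_j|\ge\operatorname{diam}((\Omega_0)_j)/3$, hence it contains their convex hull, a ``spike'' whose $n$-volume is at least $c(n)\,\rho_j^{\,n-1}\operatorname{diam}((\Omega_0)_j)$ (a cone over an $(n-1)$-ball of radius $\rho_j$). Combining this with the inequality $|K|\le\rho(K)\,P(K)$ valid for convex bodies (see \cite[Prop. 2.4.3]{bucur2004variational}) and with $P((\Omega_0)_j)=P(B_{R_2})=:m$, I would get $c(n)\,\rho_j^{\,n-1}\operatorname{diam}((\Omega_0)_j)\le|(\Omega_0)_j|\le\rho_j m$, i.e. $\rho_j^{\,n-2}\le C(n)\,m/\operatorname{diam}((\Omega_0)_j)\to0$; hence $\rho_j\to0$ when $n\ge3$, while for $n=2$ the same chain forces $m\gtrsim\operatorname{diam}((\Omega_0)_j)\to+\infty$, already absurd. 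In either case, inserting $\rho_j\to0$ back into $|(\Omega_0)_j|\le\rho_j m$ gives $|(\Omega_0)_j|\to0$, and therefore $|\Omega_j|\le|(\Omega_0)_j|\to0$.

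It then remains to contradict the hypothesis $\lambda_1(\beta,\Omega_j)>2\lambda_1(\beta,A_{R_1,R_2})$. Using $\chi_{\Omega_j}\in H^1(\Omega_j)$ as a competitor in \eqref{Raylegh_quotient} — its gradient vanishes and its trace on $\partial(\Omega_0)_j$ is identically $1$ — and recalling $\beta<0$ together with $|\Omega_j|\le|(\Omega_0)_j|$, I obtain
\begin{equation*}
\lambda_1(\beta,\Omega_j)\ \le\ \frac{\beta\,P((\Omega_0)_j)}{|\Omega_j|}\ \le\ \frac{\beta\,P((\Omega_0)_j)}{|(\Omega_0)_j|}\ =\ \frac{\beta\,m}{|(\Omega_0)_j|}\ \longrightarrow\ -\infty,
\end{equation*}
which contradicts the uniform lower bound and proves the lemma. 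One could equally well bypass the eigenvalue step, since $|\Omega_j|\to0$ already violates the volume constraint $|\Omega_j|=|A_{R_1,R_2}|$; I would keep the eigenvalue version since it is the formulation used for the analogous statement in \cite[Lemma 3.5]{cito2021quantitative}. As indicated, the only delicate point is the vanishing-inradius claim, and the rest is a routine consequence of it.
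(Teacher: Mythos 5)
Your proof is correct and follows essentially the same contradiction argument as the paper: vanishing inradius from convexity and the perimeter constraint, then $|{\Omega_0}_j|\to 0$ via $|K|\le\rho(K)P(K)$, then the characteristic-function test. The only difference is that you usefully flesh out the vanishing-inradius step (via the cone/spike volume bound $|({\Omega_0})_j|\gtrsim\rho_j^{n-1}\operatorname{diam}$) which the paper treats as a known fact, and you correctly observe that in this setting the eigenvalue step can even be bypassed since $|\Omega_j|\to 0$ alone already violates the volume constraint.
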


We point out that the previous result is immediate if we restrict to the class $\mathcal{T}_{R_1,R_2}$. 

Now, we provide two useful semicontinuity results: the first one gives the lower semicontinuity of the boundary integral term and the second one give the upper semicontinuity of the map $\Omega\mapsto\lambda_1(\beta,\Omega)$.
\begin{lemma}[\cite{citoconvex}, Proposition 2.1]\label{pro_lscquadro}
Let $E_j,E\subset\R^n$ be convex domains such that $E_j\to E$ in the sense of Hausdorff and in measure. Let $w_j,w\in H^1(\R^n)$. If $w_j\rightharpoonup w$ in $H^1(\R^n)$, then it holds
$$\int_{\partial E} w^2\:d\sigma\le \liminf_{j}\int_{\partial E_j} w_j^2\:d\sigma$$
\end{lemma}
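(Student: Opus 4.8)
The plan is to express the boundary functional $g\mapsto\int_{\partial E}g^2\,d\mathcal H^{n-1}$ as a supremum of volume functionals via the Gauss--Green formula, and then to pass to the limit in each such volume functional, which is elementary under the two convergences at hand. Indeed, for a bounded convex (hence Lipschitz) domain $F$, a function $g\in H^1(\R^n)$, and a Lipschitz compactly supported vector field $Y\colon\R^n\to\R^n$ with $\|Y\|_\infty\le1$, applying the divergence theorem to $g^2Y\in W^{1,1}(F)$ yields
\[
\int_F\bigl(g^2\,\mathrm{div}\,Y+2\,g\,\nabla g\cdot Y\bigr)\,dx=\int_F\mathrm{div}(g^2Y)\,dx=\int_{\partial F}g^2\,(Y\cdot\nu_F)\,d\mathcal H^{n-1}\le\int_{\partial F}g^2\,d\mathcal H^{n-1},
\]
the last step since $Y\cdot\nu_F\le|Y|\le1$; and, choosing $Y$ to approximate the outer normal $\nu_E$, the supremum of the left-hand side over all admissible $Y$ equals $\int_{\partial E}g^2\,d\mathcal H^{n-1}$. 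So it suffices to pass to the limit, for each fixed $Y$, in the left-hand side evaluated along $(E_j,w_j)$.

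Here are the steps. Since $E_j\to E$ in Hausdorff distance, all the sets lie in a fixed ball $B_M$; since $w_j\rightharpoonup w$ in $H^1(\R^n)$, Rellich--Kondrachov gives $w_j\to w$ strongly in $L^2(B_M)$ (hence $w_j^2\to w^2$ in $L^1(B_M)$) while $\nabla w_j\rightharpoonup\nabla w$ in $L^2(B_M)$; and the convergence in measure means $\chi_{E_j}\to\chi_E$ in $L^1(\R^n)$. Fixing a Lipschitz $Y$ as above and writing the two terms of the left-hand side as integrals over $\R^n$ against $\chi_{E_j}$, one gets $\int_{E_j}w_j^2\,\mathrm{div}\,Y\to\int_E w^2\,\mathrm{div}\,Y$ by splitting $\chi_{E_j}w_j^2-\chi_E w^2=\chi_{E_j}(w_j^2-w^2)+(\chi_{E_j}-\chi_E)w^2$ (the first summand tends to $0$ in $L^1$ by the $L^1$-convergence of $w_j^2$, the second by dominated convergence along subsequences for which $\chi_{E_j}\to\chi_E$ a.e.), while $\chi_{E_j}w_jY\to\chi_E wY$ strongly in $L^2(B_M)$ pairs with $\nabla w_j\rightharpoonup\nabla w$ to give $\int_{E_j}2\,w_j\,\nabla w_j\cdot Y\to\int_E2\,w\,\nabla w\cdot Y$. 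Therefore
\[
\liminf_{j\to\infty}\int_{\partial E_j}w_j^2\,d\mathcal H^{n-1}\ \ge\ \lim_{j\to\infty}\int_{\partial E_j}w_j^2\,(Y\cdot\nu_{E_j})\,d\mathcal H^{n-1}=\int_E\mathrm{div}(w^2Y)\,dx=\int_{\partial E}w^2\,(Y\cdot\nu_E)\,d\mathcal H^{n-1},
\]
and taking the supremum over all admissible $Y$ yields $\liminf_j\int_{\partial E_j}w_j^2\,d\mathcal H^{n-1}\ge\int_{\partial E}w^2\,d\mathcal H^{n-1}$.

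The genuinely delicate point is the representation, i.e. the inequality $\int_{\partial E}w^2\,d\mathcal H^{n-1}\le\sup_Y\int_{\partial E}w^2\,(Y\cdot\nu_E)\,d\mathcal H^{n-1}$: since $|\nu_E|=1$ one has $1-Y\cdot\nu_E=\nu_E\cdot(\nu_E-Y)\le|\nu_E-Y|$ on $\partial E$, so it is enough to find Lipschitz compactly supported vector fields $Y_k$ with $\|Y_k\|_\infty\le1$ and $Y_k\to\nu_E$ in $L^2(\partial E,w^2\,d\mathcal H^{n-1};\R^n)$. This is standard: approximate $\nu_E$ in $L^2$ of the finite Radon measure $w^2\,d\mathcal H^{n-1}$ on $\partial E$ by smooth compactly supported fields, then postcompose with the $2$-Lipschitz radial retraction $T\colon\R^n\to\overline{B_1}$ (the identity on $\overline{B_1}$, equal to $z\mapsto z/|z|$ outside), which keeps the sup norm at most $1$ and preserves the convergence since $T$ is Lipschitz and $T(\nu_E)=\nu_E$. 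One must also keep the trace bookkeeping straight — $\int_{\partial F}g^2$ always means $\int_{\partial F}(\mathrm{tr}_{\partial F}g)^2$, and the Gauss--Green identity above is used at the $W^{1,1}$ level on the Lipschitz convex domains $E_j,E$ — and observe that a single lower semicontinuity statement must here absorb both the motion of the domain and the mere weak convergence of the functions; this is exactly what the splitting of the gradient term into a strongly convergent factor times a weakly convergent one achieves, and it is the reason why a naive argument via continuity of traces along the moving boundaries $\partial E_j$ would not work.
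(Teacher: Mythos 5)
The paper itself does not prove this lemma; it is quoted from \cite{citoconvex} (Proposition 2.1), so there is no in-text argument to compare against. Assessing your proof on its own terms: it is correct. The duality representation
\[
\int_{\partial F} g^2\,d\mathcal H^{n-1}=\sup\Bigl\{\int_F \operatorname{div}(g^2Y)\,dx\ :\ Y\in \mathrm{Lip}_c(\R^n;\R^n),\ \|Y\|_\infty\le 1\Bigr\},
\]
valid because bounded convex sets are Lipschitz and $g^2\in W^{1,1}(F)$ when $g\in H^1$, reduces the lower semicontinuity of the boundary energy to a term-by-term limit of volume integrals for each fixed $Y$, which you handle correctly: the $L^1$-convergence of $w_j^2$ (Rellich) combined with $\chi_{E_j}\to\chi_E$ in $L^1(\R^n)$ and a sub-subsequence/dominated-convergence argument gives $\int_{E_j}w_j^2\operatorname{div}Y\to\int_E w^2\operatorname{div}Y$, while the strong $L^2$ convergence of $\chi_{E_j}w_jY$ paired against $\nabla w_j\rightharpoonup\nabla w$ gives the gradient term. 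The representation step is also sound: $\nu_E$ lies in $L^2$ of the finite Radon measure $w^2\,d\mathcal H^{n-1}\!\restriction_{\partial E}$, smooth compactly supported fields are dense there, and post-composing with the $2$-Lipschitz retraction onto $\overline{B_1}$ preserves the constraint $\|Y\|_\infty\le 1$ and the convergence. A more ``hands-on'' route, and probably the one taken in \cite{citoconvex}, would exploit that Hausdorff-convergent convex bodies admit uniformly Lipschitz boundary parametrizations and deduce strong $L^2$-convergence of traces; your argument instead avoids any uniformity of Lipschitz constants and works more generally (e.g.\ for sets of finite perimeter with $\chi_{E_j}\to\chi_E$ in $L^1$ plus a uniform perimeter bound), at the modest cost of the Gauss--Green machinery. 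The one point worth spelling out more explicitly in a write-up is that the convergence of the two volume terms is established along every sub-subsequence with the same limit, hence for the full sequence.
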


%The following upper semicontinuity issue holds for the functional $\Omega\mapsto\lambda_1(\beta,\Omega)$.
\begin{lemma}\label{usc_lambda}
Let $\Omega_j,\Omega\subset \R^n$ be in the class $\mathcal{T}_{R_1,R_2}$,  with $\Omega_j \to \Omega$ in the sense of Hausdorff and in measure. Then 
$$
\limsup_{j\to+\infty} \lambda_1(\beta,\Omega_j)  \leq\lambda_1(\beta,\Omega).
$$
\begin{proof}
We proceed similarly to \cite[Prop. 3.1]{citoconvex}, being careful to the fact that the boundary integral keeps into account only $\partial\Omega_0$. Let $u\in H^1(\Omega)$ an eigenfunction for $\lambda_1(\beta,\Omega)$, and let $\overline{u}\in H^1(\R^n)$ an extension of $u$ to the whole of $\R^n$. Let us notice that, in view of the hypotheses, the convergence of the convex sets ${\Omega_0}_j \to \Omega_0$ in the sense of Hausdorff and in measure holds. Thus we have
$$
\int_{\partial \Omega_0}u^2 d\H^{n-1}=\int_{\partial \Omega_0}\overline{u}^2 d\H^{n-1}\le\liminf_{j\to+\infty}\int_{\partial {\Omega_0}_j}\overline{u}^2 d\H^{n-1}
$$
as a consequence of Lemma \ref{pro_lscquadro}. Moreover, both volume integrals are continuous in view of the convergence in measure $\Omega_j\to\Omega$ and this implies the upper semicontinuity of the Rayleigh quotients, defined in \eqref{Raylegh_quotient}. Therefore, we have
$$\lambda_1({\beta},\Omega)=R_\Omega(u)=R_\Omega(\overline{u})\ge\limsup_{j\to+\infty}R_{\Omega_j}(\overline{u})\ge\limsup_{j\to+\infty} \lambda_1(\beta,\Omega_j).$$
\end{proof}
\end{lemma}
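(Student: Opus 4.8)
The plan is to reduce the statement to an upper semicontinuity property of a single, fixed Rayleigh quotient along the sequence $\Omega_j$, via the usual extension–restriction trick. First I would fix an eigenfunction $u\in H^1(\Omega)$ realizing $\lambda_1(\beta,\Omega)$. Since $\Omega=\Omega_0\setminus\overline{\Theta}$ with $\Omega_0$ a bounded convex set and $\Theta$ a convex set with $\overline{\Theta}\subset\Omega_0$, the boundary $\partial\Omega=\partial\Omega_0\cup\partial\Theta$ is Lipschitz, so $u$ admits a bounded extension $\overline{u}\in H^1(\R^n)$. For every $j$ the restriction $\overline{u}|_{\Omega_j}$ is an admissible competitor in \eqref{eig_min}, hence $\lambda_1(\beta,\Omega_j)\le R_{\Omega_j}(\overline{u})$, and the whole matter is therefore reduced to proving $\limsup_{j}R_{\Omega_j}(\overline{u})\le R_\Omega(\overline{u})=R_\Omega(u)=\lambda_1(\beta,\Omega)$.

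To control the three ingredients of $R_{\Omega_j}(\overline{u})$ I would argue as follows. The convergence in measure means $|\Omega_j\Delta\Omega|\to0$, so $\chi_{\Omega_j}\to\chi_\Omega$ in $L^1(\R^n)$; since $|\nabla\overline{u}|^2,\overline{u}^2\in L^1(\R^n)$, absolute continuity of the integral gives $\int_{\Omega_j}|\nabla\overline{u}|^2\,dx\to\int_\Omega|\nabla\overline{u}|^2\,dx$ and $\int_{\Omega_j}\overline{u}^2\,dx\to\int_\Omega u^2\,dx>0$. For the boundary term I want to apply Lemma \ref{pro_lscquadro} to the convex domains $E_j=(\Omega_0)_j$, $E=\Omega_0$ and the constant sequence $w_j\equiv\overline{u}$ (so $w_j\rightharpoonup\overline{u}$ trivially), obtaining $\int_{\partial\Omega_0}\overline{u}^2\,d\mathcal{H}^{n-1}\le\liminf_{j}\int_{\partial(\Omega_0)_j}\overline{u}^2\,d\mathcal{H}^{n-1}$; since $\beta<0$, this yields $\limsup_{j}\big(\beta\int_{\partial(\Omega_0)_j}\overline{u}^2\,d\mathcal{H}^{n-1}\big)\le\beta\int_{\partial\Omega_0}\overline{u}^2\,d\mathcal{H}^{n-1}$. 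The step that genuinely needs justification here is that $(\Omega_0)_j\to\Omega_0$ in the sense of Hausdorff and in measure: because the hole is compactly contained in the outer box one has $\overline{\Omega_0}=\mathrm{conv}(\overline{\Omega})$ and $\overline{(\Omega_0)_j}=\mathrm{conv}(\overline{\Omega_j})$, the convex-hull map is $1$-Lipschitz for the Hausdorff distance, and Hausdorff convergence of convex bodies towards a body with nonempty interior (here $\rho(\Omega_0)>0$) forces convergence in measure.

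It then remains to assemble the pieces. Writing $N_j=\int_{\Omega_j}|\nabla\overline{u}|^2\,dx+\beta\int_{\partial(\Omega_0)_j}\overline{u}^2\,d\mathcal{H}^{n-1}$ and $D_j=\int_{\Omega_j}\overline{u}^2\,dx$, the previous paragraph gives $\limsup_j N_j\le N:=\int_\Omega|\nabla\overline{u}|^2\,dx+\beta\int_{\partial\Omega_0}\overline{u}^2\,d\mathcal{H}^{n-1}$ and $D_j\to D:=\int_\Omega u^2\,dx>0$; since the $\Omega_j$ are eventually contained in a fixed ball (by Hausdorff convergence) and the $(\Omega_0)_j$ are convex with perimeter $P(B_{R_2})$, a uniform trace bound on this family of convex sets makes $(N_j)$ bounded, whence $\limsup_j R_{\Omega_j}(\overline{u})=\limsup_j N_j/D_j=(\limsup_j N_j)/D\le N/D=R_\Omega(\overline{u})$. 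Combining with $\lambda_1(\beta,\Omega_j)\le R_{\Omega_j}(\overline{u})$ and $R_\Omega(\overline{u})=\lambda_1(\beta,\Omega)$ closes the argument. I expect the only delicate points to be the identification $\overline{\Omega_0}=\mathrm{conv}(\overline{\Omega})$ together with the resulting convergence of the outer boxes, and the uniform trace estimate needed to pass to the $\limsup$ in the quotient; the rest is a routine application of Lemma \ref{pro_lscquadro} and of the absolute continuity of the integral.
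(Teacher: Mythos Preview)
Your proposal is correct and follows essentially the same approach as the paper's proof: extend the eigenfunction $u$ to $\overline{u}\in H^1(\R^n)$, use convergence in measure for the volume integrals, and invoke Lemma \ref{pro_lscquadro} (with the sign of $\beta$) for the boundary term to obtain $\limsup_j R_{\Omega_j}(\overline{u})\le R_\Omega(u)$. You supply more detail than the paper on two points it takes for granted, namely the convergence $(\Omega_0)_j\to\Omega_0$ (via the convex-hull identification) and the passage to the limit in the quotient, but the underlying strategy is identical.
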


Since $A_{R_1,R_2}$ is the unique maximizer of $\lambda_1(\beta,\cdot)$ in our class of admissible sets, as a consequence of the upper semicontinuity of $\lambda_1(\beta,\Omega)$ and of the isodiametric control in Lemma \ref{isod}, the following convergence result holds.

\begin{lemma}\label{lem:maxi}
Let $\{\Omega_j\}_{j\in\N}\subset\mathcal{T}_{R_1,R_2}$ be a maximizing sequence for Problem \eqref{robinproblem} with $\Omega_j$ is barycentered at the origin. Then  $$d_\H(\Omega_j,A_{R_1,R_2}) \to 0.$$
\begin{proof}
Thanks to Lemma \ref{isod}, the diameters of the convex sets ${\Omega_0}_j$ and $\Theta_j$ are uniformly bounded. 

%Let us suppose that $\Omega_{0,j}$ does not converge to $B_{R_2}$. Thus, there exists $\bar{\varepsilon}$ such that (possibly passing to a subsequence) $$\liminf_j d_\mathcal{H}(\Omega_{0,j},B_{R_2})>\bar{\varepsilon}.$$
%As a consequence, we get that $|\Omega_j\setminus A_{R_1,R_2}|$ does not vanish. Otherwise, since $diam(\Omega_j)<C$, $P(\Omega_{0,j})=n\omega_nR_2^n$ and $d_\mathcal{H}(\Omega_{0,j},B_{R_2})$ is bounded from below by a positive constant, we should have that $|\Omega_{0,j}|\to 0$, a contradiction. Let now $z$ be an eigenfunction for $A_{R_1,R_2}$ such that $\|u\|_{L^2(A_{R_1,R_2})}=1$ and let us denote 

%Since $|\Omega_{0,j}|\to|B_{R_2}|$, then $|\Omega_{0,j}|=|B_{R_2}|-o(1)$. As a consequence we have that $$|\Theta_j|=|\Omega_{0,j}|-|\Omega_j|=|B_{R_2}|-o(1)-|A_{R_1,R_2}|=|B_{R_1}|-o(1),$$then $|\Theta_j|$ is uniformly bounded from below.
The uniform upper bound on the diameters and the uniform lower bound $\rho(\Theta_j)>\vartheta_{R_1,R_2}$ imply that, up to subsequences, there exist an open bounded convex set $\Omega_0$ and an open bounded convex set $\Theta\subset\subset\Omega_0$ such that $\Omega:=\Omega_0\setminus\overline{\Theta}\in\mathcal{T}_{R_1,R_2}$ and $d_\H(\Omega_{j},\Omega) \to 0$. Since $\lambda_1(\beta,\cdot)$ is upper semicontinuous in view of Lemma \ref{usc_lambda}, we obtain that
$$
\max_{\mathcal{A}_{R_1,R_2}}\lambda_1(\beta,\cdot)=\limsup_{j\to+\infty}\lambda_1(\beta,\Omega_j)  \leq\lambda_1(\beta,\Omega).
$$
So, since the spherical shell $A_{R_1,R_2}$ is the unique maximizer for $\lambda_1(\beta,\cdot)$ in $\mathcal{T}_{R_1,R_2}$, we get that $\Omega=A_{R_1,R_2}$  necessarily.
\end{proof}
\end{lemma}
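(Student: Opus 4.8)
The plan is to combine an isodiametric compactness bound with the upper semicontinuity of $\lambda_1(\beta,\cdot)$ and the uniqueness of the maximizer. First I would note that, since $\{\Omega_j\}$ is maximizing and $\lambda_1(\beta,A_{R_1,R_2})<0$, we have $\lambda_1(\beta,\Omega_j)\to\lambda_1(\beta,A_{R_1,R_2})=\sup_{\mathcal{T}_{R_1,R_2}}\lambda_1(\beta,\cdot)$; in particular $\lambda_1(\beta,\Omega_j)>2\lambda_1(\beta,A_{R_1,R_2})$ for $j$ large, so Lemma \ref{isod} provides a uniform bound $\text{diam}(\Omega_j)\le C$. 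Since $\Omega_{0,j}$ and $\Theta_j$ are convex and uniformly bounded, the Blaschke selection theorem gives, along a subsequence (not relabeled), convex bodies $\Omega_0$ and $\Theta\subset\subset\Omega_0$ with $\Omega_{0,j}\to\Omega_0$ and $\Theta_j\to\Theta$ in the Hausdorff metric; the lower bounds $\rho(\Theta_j)\ge\vartheta_{R_1,R_2}$ and $d_{\mathcal H}(\Theta_j,\Omega_{0,j})\ge\vartheta_{R_1,R_2}$ pass to the limit, ensuring $\Theta$ has nonempty interior and $\overline\Theta\subset\subset\Omega_0$. Because perimeter and Lebesgue measure are continuous along Hausdorff-convergent sequences of uniformly bounded convex bodies, the constraints $P(\Omega_{0,j})=P(B_{R_2})$ and $|\Omega_j|=|A_{R_1,R_2}|$ are preserved, so $\Omega:=\Omega_0\setminus\overline\Theta\in\mathcal{T}_{R_1,R_2}$ and $\Omega_j\to\Omega$ both in Hausdorff distance and in measure.

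Next I would apply Lemma \ref{usc_lambda} to obtain $\sup_{\mathcal{T}_{R_1,R_2}}\lambda_1(\beta,\cdot)=\limsup_j\lambda_1(\beta,\Omega_j)\le\lambda_1(\beta,\Omega)$, so $\Omega$ realizes the maximum in Problem \eqref{robinproblem}. The equality case in Theorem \ref{max_lambda} (equivalently, inequality \eqref{ineq_eig} with equality only for the shell) then forces $\Omega=A_{R_1,R_2}$; here the assumption that each $\Omega_j$ is barycentered at the origin is what pins the limit to the shell \emph{centered at the origin} rather than to a translate of it. Finally, a routine subsequence argument promotes this to convergence of the entire sequence: any subsequence of $\{\Omega_j\}$ is still maximizing, hence admits a further subsequence converging in Hausdorff distance to $A_{R_1,R_2}$, and since the limit is always the same, $d_{\mathcal H}(\Omega_j,A_{R_1,R_2})\to 0$.

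The step I expect to be the most delicate is checking that the limit set actually lies in $\mathcal{T}_{R_1,R_2}$: one must ensure $\Theta$ does not collapse and that $\overline\Theta$ remains compactly contained in $\Omega_0$ (both guaranteed precisely by the uniform inradius and Hausdorff-distance thresholds in the definition of $\mathcal{T}_{R_1,R_2}$), and one must know that the hard constraints on perimeter and volume — not merely their semicontinuous versions — survive the limit, which relies on the continuity of these functionals on the class of convex bodies with equibounded diameter. Everything else is a soft compactness-and-semicontinuity argument once Lemmas \ref{isod} and \ref{usc_lambda} are in hand.
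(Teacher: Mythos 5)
Your proposal is correct and follows essentially the same route as the paper: isodiametric bound from Lemma~\ref{isod}, Blaschke compactness for the uniformly bounded convex bodies $\Omega_{0,j}$ and $\Theta_j$, preservation of the class constraints in the Hausdorff limit, upper semicontinuity of $\lambda_1(\beta,\cdot)$ from Lemma~\ref{usc_lambda}, and uniqueness of the maximizer. If anything you are slightly more careful than the paper: you explicitly check that the perimeter/volume constraints survive the limit, flag where barycentering pins the limit shell to the origin, and supply the routine subsubsequence argument that upgrades subsequential Hausdorff convergence to convergence of the full sequence, a step the paper leaves implicit.
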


As a consequence of the previous lemma, we can actually restrict our main stability result to nearly spherical sets barycentered in the origin.
\begin{lemma} \label{red}
Let $0<R_1<R_2<+\infty$. There exists a positive constant $\delta_0=\delta_0(n,\beta,R_1,R_2)$ such that, if $\Omega\in\mathcal{T}_{R_1,R_2}$ and  $$\lambda_1(\beta,A_{R_1,R_2})-\lambda_1(\beta,\Omega) \leq \delta_0$$
then, up to a translation, $\Omega$ is a $(R_1,R_2)-$nearly annular set.
\end{lemma}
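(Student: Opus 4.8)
The plan is to argue by contradiction, exploiting the compactness available in $\mathcal{T}_{R_1,R_2}$ together with the already established identification of maximizing sequences with the spherical shell. Suppose the statement fails: then for every $j\in\N$ there is a set $\Omega_j={\Omega_0}_j\setminus\overline{\Theta_j}\in\mathcal{T}_{R_1,R_2}$ with $\lambda_1(\beta,A_{R_1,R_2})-\lambda_1(\beta,\Omega_j)\le 1/j$ and such that no translate of $\Omega_j$ is a $(R_1,R_2)$-nearly annular set. By Theorem \ref{max_lambda} we have $\lambda_1(\beta,\Omega_j)\le\lambda_1(\beta,A_{R_1,R_2})$, so $\{\Omega_j\}$ is a maximizing sequence for Problem \eqref{robinproblem}. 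After translating each $\Omega_j$ so that it is barycentered at the origin (which is harmless, since being nearly annular is required only up to translation), Lemma \ref{lem:maxi} gives $d_\H(\Omega_j,A_{R_1,R_2})\to 0$.

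The next step is to promote this to separate Hausdorff convergence of the two boundaries. By Proposition \ref{tendea0} and the equivalence stated immediately after its proof, $d_\H(\Omega_j,A_{R_1,R_2})\to 0$ yields both $d_\H({\Omega_0}_j,B_{R_2})\to 0$ and $|\Theta_j\,\Delta\,B_{R_1}|\to 0$. Since each $\Theta_j$ is convex with $\rho(\Theta_j)\ge\vartheta_{R_1,R_2}$ and, by Lemma \ref{isod}, of uniformly bounded diameter, a convex set with such controls cannot have small symmetric difference with $B_{R_1}$ without being Hausdorff close to it; hence $d_\H(\Theta_j,B_{R_1})\to 0$ as well. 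Translating once more by a vector of vanishing length, we may assume in addition that $0\in\Theta_j$ for every $j$, while still $d_\H({\Omega_0}_j,B_{R_2})\to 0$ and $d_\H(\Theta_j,B_{R_1})\to 0$, with both balls centered at the common origin.

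To conclude, I would invoke the standard fact that a bounded convex set whose boundary is Hausdorff close to a sphere $\partial B_R$ centered at an interior point is an $R$-nearly spherical set parametrized by a function small in $W^{1,\infty}(\mathbb{S}^{n-1})$: convexity forces the boundary of the set to be a radial graph $\xi\mapsto\xi(R+w(\xi))$ with $\|w\|_{L^\infty}$ controlled by the Hausdorff distance and, crucially, $\|\nabla_\tau w\|_{L^\infty}\to 0$ as the Hausdorff distance tends to $0$, since a large tangential slope of a convex radial graph lying near a round sphere is incompatible with convexity. Applying this to ${\Omega_0}_j\approx B_{R_2}$ and to $\Theta_j\approx B_{R_1}$ produces $u_j,v_j\in W^{1,\infty}(\mathbb{S}^{n-1})$ with $\|u_j\|_{W^{1,\infty}},\|v_j\|_{W^{1,\infty}}\to 0$; for $j$ large, $\|u_j\|_{W^{1,\infty}}<R_2/2$ and $\|v_j\|_{W^{1,\infty}}\le R_1/2$, so $\Omega_j$ is a $(R_1,R_2)$-nearly annular set up to translation, contradicting the choice of the sequence. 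All the ingredients — Lemmata \ref{isod}, \ref{usc_lambda}, \ref{lem:maxi}, Proposition \ref{tendea0}, the constant $\vartheta_{R_1,R_2}$, and the convexity-to-parametrization passage — depend only on $n,\beta,R_1,R_2$, and hence so does the threshold $\delta_0$ produced. The main obstacle is the last step, namely extracting the $W^{1,\infty}$ (rather than merely $L^\infty$) smallness of $u_j$ and $v_j$, which is exactly where the convexity of \emph{both} ${\Omega_0}_j$ and $\Theta_j$ has to be used; the bookkeeping of the successive vanishing translations needed to place a single origin inside all the holes while keeping the two limiting balls concentric is a secondary and routine point.
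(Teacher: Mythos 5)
Your proposal is correct and follows essentially the same contradiction-and-compactness route as the paper: assume a sequence violating the conclusion, observe it is maximizing, invoke Lemma \ref{lem:maxi} to get $d_{\mathcal H}(\Omega_j,A_{R_1,R_2})\to 0$, and derive a contradiction with the nearly annular property failing. The paper's proof is actually more terse than yours — it passes directly from $d_{\mathcal H}(\Omega_j,A_{R_1,R_2})\to 0$ to the contradiction without spelling out the separate Hausdorff convergence of $\partial{\Omega_0}_j$ and $\partial\Theta_j$ or the convexity-based passage from Hausdorff closeness to $W^{1,\infty}$ smallness of the radial parametrizations, both of which you correctly identify as the substantive implicit steps.
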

\begin{proof}
The proof is based on a straightforward contradiction argument. Indeed, let us suppose that, for every $j\in\N$, there exists an admissible set $\Omega_j={\Omega_0}_j\setminus\overline{\Theta_j}$ such that
\begin{equation}\label{5.5.1}
\lambda_1(\beta,A_{R_1,R_2})-\lambda_1(\beta,\Omega_j) \leq \frac{1}{j}
\end{equation}
and any translations of $\Omega_j$ is not a $(R_1,R_2)-$nearly annular set, i.e.
\begin{equation}\label{5.5.2}
\text{either}\quad d_\mathcal{H}({\Omega_0}_j,B_{R_2})>\frac{R_2}{2}\quad \text{or} \quad d_\mathcal{H}(\Theta_j,B_{R_1})>\frac{R_1}{2}.
\end{equation}

By \eqref{5.5.1}, $\{\Omega_j\}_{j\in\N}$ is a maximizing sequence for Problem \eqref{robinproblem}; if we consider the sequence (still denoted by $\{\Omega_j\}_{j\in\N}$) of translated sets with barycenter at the origin, we are under the hypotheses of Lemma \ref{lem:maxi} and thus we get $d_\mathcal{H}(\Omega_j,A_{R_1,R_2})\to 0$, that is in contradiction with \eqref{5.5.2}.
\end{proof}

\subsection{Conclusion}

Now we are in position to prove both the Main Theorems stated in the introduction. 

\begin{proof}[Proof of the Main Theorem 1]
It is a straightforward combination of Theorem \ref{teo:esterno} and Theorem \ref{teo:buco}.
\end{proof}

\begin{proof}[Proof of the Main Theorem 2]
It is a straightforward combination of Theorem \ref{teo:esterno}, Theorem \ref{teo:buco} and Lemma \ref{red}, which ensures us that, if $\delta_0$ is small enough, then we can suppose without loss of generality that $\Omega$ is a nearly $(R_1,R_2)-$annular set with barycenter at the origin, $P(\Omega_0)=n\omega_n R_2^{n-1}=P(B_{R_2})$ and $|\Omega|=A_{R_1,R_2}$.
\end{proof}

\begin{remark}
We point out that, as usual when dealing with negative Robin boundary conditions, the constant $C(n,R_1,R_2,\beta)$ appearing in the stability inequalities \eqref{stab_ineq_intro1} and \eqref{stab_ineq_intro2}
do not depend only on the dimension, but also on the boundary parameter $\beta$ and on $R_1$ and $R_2$, i.e. on the size of the admissible sets for the maximization problem. %This is a standard behaviour when dealing with the stability of a spectral inequality with Robin boundary conditions and negative boundary parameter, because of the  .
\end{remark}

\section{Open problems}
\label{rem_sec}

In this Section, we collect some open problems.

\begin{open}
    By repeating the very same computations as in Theorem \ref{teo:buco} for the $p$-Laplace Robin-Neumann eigenvalue $\lambda_1^{(p)}(\beta,\Omega)$, we get
$$\lambda^{(p)}_1(\beta,A_{R_1,R_2})-\lambda_1^{(p)}(\beta,\Omega)\ge
\min\{1,|\lambda^{(p)}_1(\beta,A_{R_1,R_2})|\}
\int_{\Theta\setminus K}(|\nabla w_{\Omega_0}|^p+w_{\Omega_0}^p-z_m^p)\:dx.$$
To get a complete stability result also for the nonlinear case we need an estimate for the asymmetry of $\Omega_0$ which at the moment does not seem available, since our argument based on a Fuglede type approach does not seem to apply in the nonlinear framework due to the lack of an explicit expression for $z$.

Moreover, we ask whether the method used in this paper may be applied in more general frameworks, as e.g. the Minkowski spaces, in which the standard Euclidean norm is replaced by a Finsler norm \cite{van2006anisotropic}. 
As far as Robin boundary conditions in the anisotropic setting, we refer e.g to \cite{paoli2019two,  barbato2023first}, where isoperimetric inequalities, involving possibly nonlinear elliptic operators, are proved. It remains open the problem of the generalisation to holed domains with Robin- Neumann anisotropic boundary conditions and the relative quantitative estimates. 
\end{open}

\begin{open}
It would be interesting to consider the case $\beta>0$. In \cite{paoli2020sharp}, it is proved that  the spherical shell such that $|A|=|\Omega|$ and  $P(B_{R_2})=P(\Omega_0)$ is maximizing:
\begin{gather*}
\lambda_1%^{RN}
(\beta, \Omega)\leq \lambda_1%^{RN}
(\beta, A).
\end{gather*}
A stability result in this sense requires different methods because, for example, it is not possible to consider the auxiliary Steklov-type problem. %The stability result could rely on the quantification of the Alexandrov-Fenchel inequalities. %The objective is to show quantitatively how much $\Omega$ is close to the spherical shell.

\begin{open}
Prove the analogous of the isoperimetric inequality \eqref{ineq_eig}, contained in \cite{paoli2020sharp}, in the case when $\beta$ is a positive or negative function satisfying suitable regularity assumptions, and prove the associated quantitative inequality.
\end{open}

\begin{open}
It remains open the problem of proving the sharpness of the our quantitative results.

%It is possible to prove the result of the Main Theorem also in some larger classes than $\mathcal{T}_{R_1,R_2}$. For instance, one can consider $\Theta$ as a uniformly finite union of nondegenerate convex sets such that the connected components of $\partial\Omega$ lie at uniformly bounded distance and the inradii are uniformly bounded from below. More generally, one can replace the convexity constraint and the nondegeneracy assumptions on the admissible sets $\Theta$ by a uniform cone condition. 

%Indeed, the isoperimetric inequality \eqref{ineq_eig} has been proved in \cite{paoli2020sharp} in a larger setting than $\mathcal{T}_{R_1,R_2}$, and so the proof of the stability \eqref{stab_ineq_intro1} and \eqref{stab_ineq_intro2} can be easily generalized to the above mentioned more general cases.
\end{open}
\end{open}

\section*{Acknowledgment}
S. Cito was supported by "Elliptic and parabolic problems, heat kernel estimates and spectral theory" PRIN 2022 project n. 20223L2NWK, funded by the Italian Ministry of University and
Research.

G. Paoli was supported by "A sustainable and trusted Transfer Learning Platform for Edge Intelligence" PRIN 2022 PNRR project n. P2022TTW7L, funded by
the Italian Ministry of University and
Research.

G. Piscitelli was supported by “Geometric-Analytic Methods for PDEs and Applications (GAMPA)” PRIN 2022 project n. 2022SLTHCE, funded by Italian Ministry of University and
Research.

The three authors was supported by Gruppo Nazionale per l’Analisi
Matematica, la Probabilità e le loro Applicazioni (GNAMPA) of Istituto Nazionale di Alta
Matematica (INdAM).

 G. Paoli was supported by the Alexander von Humboldt Foundation through an Alexander von Humboldt research fellowship. 
 
\bibliographystyle
%{alpha}
{abbrv}
\bibliography{bibliography.bib}
\end{document}